\numberwithin{equation}{section}
\newtheorem{thm}{Theorem}[section]
\newtheorem{cor}[thm]{Corollary}
\newtheorem{question}[thm]{Question}
\newtheorem{conj}[thm]{Conjecture}
\newtheorem{prop}[thm]{Proposition}
\newtheorem{lem}[thm]{Lemma}
\theoremstyle{remark}
\newtheorem{rem}[thm]{Remark}
\newtheorem{example}[thm]{Example}
\theoremstyle{definition}
\newtheorem{defn}[thm]{Definition}
\theoremstyle{remark}
\newtheorem{remark}[thm]{Remark}
\theoremstyle{property}
\newcommand{\R}{\mathbb{R}}
\newcommand{\Z}{\mathbb{Z}}
\newcommand{\C}{\mathbb{C}}
\newcommand{\CP}{\mathbb{C}P}
\newcommand{\RP}{\mathbb{R}P}
\newcommand{\N}{\mathbb{N}}
\newcommand{\T}{\mathbb{T}}
\newcommand{\F}{\mathbb{F}}
\newcommand{\blk}{\color{black}}
\newcommand{\im}{\operatorname{im}}
\newcommand{\OP}{\operatorname}
\newcommand{\pt}{\mathrm{pt}}
\begin{document}

\title[Legendrians from Bohr--Sommerfeld covers of Lagrangian tori]{Legendrian submanifolds from Bohr--Sommerfeld covers of monotone Lagrangian tori}

\author{Georgios Dimitroglou Rizell}
\author{Roman Golovko}

\begin{abstract}
 By a result due to Ziltener, there exist no closed embedded Bohr--Sommerfeld Lagrangians inside $\CP^n$ for the prequantisation bundle whose total space is the standard contact sphere. On the other hand, any embedded monotone Lagrangian torus has a canonical nontrivial cover which is a Bohr--Sommerfeld immersion. We draw the front projections for the corresponding Legendrian lifts inside a contact Darboux ball of the threefold covers of both the two-dimensional Clifford and Chekanov tori (the former is the Legendrian link of the Harvey--Lawson special Lagrangian cone), and compute the associated Chekanov--Eliashberg algebras. Although these Legendrians are not loose, we show that they both admit exact Lagrangian cobordisms to the loose Legendrian sphere; they hence admit exact Lagrangian caps in the symplectisation, which are non-regular Lagrangian cobordisms. Along the way, we also compute bilinearised Legendrian contact homology of a general Legendrian surface in the standard contact vector space when all Reeb chords are of positive degree, as well as the augmentation variety in the case of tori.
\end{abstract}

\address{Department of Mathematics, Uppsala University, Box 480, SE-751 06, Uppsala, Sweden}
\email{georgios.dimitroglou@math.uu.se}

\address{Faculty of Mathematics and Physics, Charles University, Sokolovsk\'{a} 83, 18000 Praha 8, Czech Republic} \email{golovko@karlin.mff.cuni.cz}
\date{\today}
\thanks{}
\subjclass[2010]{Primary 53D12; Secondary 53D42}

\keywords{Bohr--Sommerfeld Lagrangian, monotone tori, Legendrian
lift, Clifford torus, Chekanov torus, subloose Legendrian, non-regular Lagrangian cobordisms}

\maketitle

\section{Introduction}
Prequantisation $S^1$-bundles $\pi \colon E \to M$ form an important class of contact manifolds $(E^{2n+1},\alpha)$ that have been well studied from many different  points of view\color{black}. By definition, the contact form $\alpha$ is a connection one-form for the $S^1$-bundle, and the curvature is a symplectic two-form $\omega \in \Omega^2(M)$ on $M^{2n}.$ For that reason, there is a close relationship between the symplectic geometry of $(M,\omega)$ and the contact geometry of $(E,\ker \alpha).$ For instance, every Legendrian immersion inside $E$ projects to a  Lagrangian immersion inside $(M,\omega)$ which satisfies the Bohr--Sommerfeld condition  (see Definition \ref{defn:BS} and Lemma \ref{lem:prequant}).  Our goal here is to study certain embedded Legendrians $\Lambda \subset (E,\ker\alpha)$ whose projection $\pi(\Lambda) \subset M$ again has an embedded \emph{image} (but which is possibly  multiply covered). In fact, we are mainly interested in the case $M=\CP^n$ and the line-bundle $\mathcal{O}(-1),$ which produces the standard round contact sphere $(E,\alpha)=(S^{2n+1},\alpha_{\OP{st}})$. The connection between Bohr--Sommerfeld Lagrangian immersions in the projective plane and Legendrians in the standard contact sphere was also studied in recent work by \cite{LiftingImmersions} by  Baldridge--McCarthy--Vela-Vick.\color{black}

It is a standard fact that any prequantisation bundle admits symplectic fillings (which need not be exact). In the case of a complex line bundle over a K\"{a}hler variety, this is the standard fact that the unit disc-bundle of any negative complex line bundle has a strictly pseudoconvex boundary; its boundary is a prequantisation bundle. A similar construction works in the symplectic case as well.
\begin{example} In the case of the bundle $\mathcal{O}(-1)$ over $\CP^n$ the unit-circle bundle becomes the Hopf fibration $S^{2n+1}\to\CP^n$, and the associated unit-disc bundle is the blow-up of $D^{2n+2}$ at the origin. We can of course blow down the exceptional divisor to produce the standard \emph{exact} symplectic filling of $S^{2n+1}$, i.e.~the standard symplectic disc $D^{2n+2}$. One slightly confusing point here is that the contact form on the boundary of the unit-disc differs by a sign from the naturally induced connection one-form of the unit-circle bundle associated to $\mathcal{O}(-1)$. From this point of view, the prequantisation bundle $S^{2n+1} \to \CP^n$ could be said to be naturally induced from the unit-sphere bundle on $\mathcal{O}(1)$ instead, however this is only a matter of conventions.
\end{example}

Mohnke has shown that any Legendrian in the boundary of a subcritically fillable contact manifold admits a Reeb chord \cite{MohnkeChord}. Ziltener later improved this result to the following quantitative statement in the case of the round sphere:\blk
\begin{thm}[\cite{Chord}]
\label{noembeddedBSLagr}
Any closed Legendrian submanifold inside the round contact $(S^{2n+1},\alpha_{\OP{st}})$ admits a Reeb chord of length $(0,\pi/2]$ (where $\pi/2$ is half the minimal period of a Reeb orbit). In particular, there exists no closed Bohr--Sommerfeld Lagrangian embeddings inside the standard symplectic $\CP^n$ for the prequantisation bundle $\mathcal{O}(-1).$
\end{thm}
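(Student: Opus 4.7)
The plan is to establish the first (Reeb chord) statement, from which the second follows at once. For the second, if $L \subset \CP^n$ is a closed embedded Bohr--Sommerfeld Lagrangian then it admits a closed embedded Legendrian lift $\Lambda \subset S^{2n+1}$ diffeomorphic to $L$, and since $\Lambda$ is a section of $\pi|_{\pi^{-1}(L)}$ over the embedded $L$, any Reeb chord joining $p$ and $\phi^t(p)$ in $\Lambda$ must have $\pi(p) = \pi(\phi^t(p))$, which by embeddedness forces $p = \phi^t(p)$ and hence $t \in \pi\mathbb{Z}_{>0}$ (the Reeb flow has minimal period $\pi$). This rules out chords of length in $(0, \pi/2]$ and contradicts the first statement.

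For the first statement, I would argue by contradiction: assume $\Lambda \subset S^{2n+1}$ has no Reeb chord of length in $(0, \pi/2]$, and set $L = \pi(\Lambda) \subset \CP^n$. The core input is the existence of a non-constant $J$-holomorphic disc $u \colon (D^2, \partial D^2) \to (\CP^n, L)$ of symplectic area at most $\pi/2$. Such a disc is produced via a Gromov/Cieliebak--Mohnke style argument: $\CP^n$ is monotone with minimum sphere area $\pi$ (the area of a line), the line class has non-vanishing Gromov--Witten invariant, and standard compactness and neck-stretching arguments force a bubble-tree degeneration with one sphere component of area $\pi$ and a disc component of complementary area at most $\pi/2$.

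Given such a disc, I would produce the contradicting Reeb chord by horizontal lifting. The form $\alpha_{\OP{st}}$ is a connection one-form on the Hopf bundle $\pi \colon S^{2n+1} \to \CP^n$ whose curvature is (a multiple of) $\omega_{\OP{FS}}$. Lift $u|_{\partial D^2}$ horizontally to a path $\widetilde\gamma \colon [0,1] \to S^{2n+1}$ starting at an arbitrary point $\widetilde\gamma(0) \in \Lambda$ over $u(1)$. Because $d\pi|_{T\Lambda}$ is a fibrewise isomorphism onto $TL$ and $\alpha_{\OP{st}}|_{T\Lambda} = 0$, the horizontal distribution coincides with $T\Lambda$ along $\Lambda$, so the lift remains in $\Lambda$. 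By the standard holonomy formula the endpoints satisfy $\widetilde\gamma(1) = \phi^{a}(\widetilde\gamma(0))$ with $a = \int_{D^2} u^*\omega_{\OP{FS}} \in (0, \pi/2]$ (normalized so that a full Reeb period corresponds to the line area), yielding the forbidden Reeb chord.

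The main technical obstacle is the sharp area bound $\pi/2$ on the disc. For $n = 1$ this is elementary: a Lagrangian circle divides $\CP^1 = S^2$ of total area $\pi$ into two discs, and the smaller has area $\leq \pi/2$. For $n \geq 2$ a generic holomorphic line need not meet $L$ at all, and the argument proceeds through a more subtle analysis of the moduli space of discs and their bubble trees; this is the main content of \cite{Chord}.
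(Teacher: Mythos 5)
First, a point of comparison: the paper does not prove this statement at all --- it is imported verbatim from Ziltener's work \cite{Chord} --- so there is no internal argument to measure your proposal against. Your deduction of the second assertion from the first is correct: the Legendrian lift of an embedded Bohr--Sommerfeld Lagrangian is a section of the circle bundle over it, so its only Reeb chords have length in $\pi\cdot\Z_{>0}$.

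Your sketch of the first assertion, however, has genuine gaps. The main one is that for an arbitrary closed Legendrian $\Lambda$ the projection $L=\pi(\Lambda)$ is only an immersed Lagrangian with possibly multiply covered image (Lemma~\ref{lem:prequant}) --- exactly the situation of the tori studied in this paper --- so one cannot speak of holomorphic discs with boundary on ``the Lagrangian $L$'', nor horizontally lift their boundary loops into $\Lambda$, without extra care. The repair is easy but it inverts your logic: under the hypothesis that $\Lambda$ has no chord of length in $(0,\pi/2]$, the restriction $\pi|_\Lambda$ is automatically injective, since two distinct points $p\neq q$ of $\Lambda$ on the same fibre satisfy $q=\phi^t(p)$ for some $t\in(0,\pi)$ and hence yield the two complementary Reeb chords of lengths $t$ and $\pi-t$, one of which lies in $(0,\pi/2]$. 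Thus $L$ is embedded, $\Lambda$ is a horizontal section over it, and $L$ is Bohr--Sommerfeld; the first assertion therefore \emph{reduces to} the second rather than implying it. This observation also shows that what you call the main technical obstacle --- a sharp area bound $\pi/2$ for the disc --- is not needed: the Bohr--Sommerfeld condition forces every disc with boundary on $L$ to have area in $\pi\cdot\Z$, so \emph{any} non-constant holomorphic disc of area in $(0,\pi)$ already gives the contradiction; the $\pi/2$ in the chord length comes from the complementary-chord trick, not from a sharp disc-area estimate. (Relatedly, your degeneration picture of ``one sphere component of area $\pi$ and a disc component of complementary area at most $\pi/2$'' cannot be right as stated, since a sphere of area $\pi$ exhausts the area of the line class and leaves the disc constant.) There is also a sign to watch in the holonomy step: the horizontal lift of $\partial u$ returns to $\phi^{\pm a}$ of its starting point with $a=\int u^*\omega$ taken modulo $\pi$, so one must again pass to the shorter of the two complementary chords. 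Finally, the existence of a non-constant disc of area in $(0,\pi)$ with boundary on an arbitrary closed embedded Lagrangian in $\CP^n$ is itself a substantial theorem which you defer to the literature; that is consistent with the paper's own treatment, but it means the proposal is a reduction, not a proof.
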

Although there are no embedded Bohr--Sommerfeld Lagrangians for the prequantisation bundle $S^{2n+1} \to \CP^n$, there still exist \blk plenty of Bohr--Sommerfeld immersions that are \emph{multiple covers} of \blk Lagrangian embeddings. Here we are interested mainly in the case of monotone tori, which all admit Bohr--Sommerfeld covers by the following result shown in Section \ref{sec:bscovers}.  (See Definition \ref{defn:monotone} for the notion of monotonicity.) \color{black}
\begin{thm}
\label{thm:bscovers}
Any monotone Lagrangian torus inside $\CP^n$ has a canonically defined $n+1$-fold cover which is a Bohr--Sommerfeld immersion for the prequantisation bundle $\mathcal{O}(-1).$ Moreover, the corresponding Legendrian lift is embedded inside $(S^{2n+1},\xi_{\OP{st}}),$ has vanishing Maslov class, and only Reeb chords of positive degree. (Here we mean the degree in the Bott sense as explained in Section \ref{sec:maslov}.)
\end{thm}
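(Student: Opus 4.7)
My approach is to use monotonicity to describe the holonomy of the pulled-back prequantisation connection, take the minimal cover that kills it, and then verify embeddedness, vanishing of the Maslov class, and positivity of the Reeb chord degrees.

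\textbf{Constructing the cover.} Recall that a Lagrangian immersion $\iota\colon L \to \CP^n$ is Bohr--Sommerfeld iff the holonomy homomorphism
\[
\mathrm{hol}\colon \pi_1(L) \longrightarrow \R/\Z,\qquad [\gamma]\longmapsto \omega_{\mathrm{FS}}(D_\gamma)\bmod\Z,
\]
vanishes, where $D_\gamma \subset \CP^n$ is any capping disk of $\gamma$; the value is independent of the choice since $[\omega_{\mathrm{FS}}]$ is integral. The universal monotonicity constant of $\CP^n$ is $\lambda = 1/(2(n+1))$, so monotonicity of $L$ gives $\omega_{\mathrm{FS}}(D_\gamma) = \mu_L(\gamma)/(2(n+1))$. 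Since the minimal Maslov number of a monotone Lagrangian torus is $2$, the image of $\mathrm{hol}$ lies in, and is in fact equal to (by the existence of a Maslov $2$ disk), the cyclic subgroup $\frac{1}{n+1}\Z/\Z \cong \Z/(n+1)$ of $\R/\Z$. The canonical $(n+1)$-fold cover $p\colon \tilde L \to L$ classified by $\ker(\mathrm{hol}) \subset \pi_1(L)$ then makes $\iota\circ p$ a Bohr--Sommerfeld immersion.

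\textbf{Embeddedness of the Legendrian lift.} The Legendrian lift $\Lambda\subset S^{2n+1}$ is the image of a flat section of the pulled-back prequantisation bundle over $\tilde L$, uniquely defined up to a global $S^1$-rotation; it is automatically an immersion because the Reeb direction is transverse to the image of $d\iota$. For injectivity, suppose $p_1, p_2 \in \tilde L$ satisfy $p(p_1)=p(p_2) = x$. They are related by a non-trivial deck transformation and can be joined in $\tilde L$ along a path descending to a loop $\gamma$ in $L$; the corresponding lifts in $S^{2n+1}$ therefore lie on the same Hopf fibre over $\iota(x)$, and differ by the non-zero element $\mathrm{hol}(\gamma)\in\Z/(n+1)\setminus\{0\}$ of $S^1$. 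Hence the lifts are distinct and $\Lambda$ is embedded.

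\textbf{Maslov class and Reeb chord degrees.} The Maslov class $\mu_\Lambda$ is naturally computed using the exact filling $(D^{2n+2},\omega_{\mathrm{std}})$, whose canonical bundle is trivial, in contrast to $\mu_L$, whose natural computation uses the non-exact symplectic filling $\mathcal{O}(-1)\to\CP^n$. The two framings of $\det\xi_{\mathrm{st}}|_\Lambda$ differ by a twist proportional to $2c_1(\CP^n)=2(n+1)$ times the symplectic area, so on $H_1(\tilde L)$ we obtain
\[
\mu_\Lambda = p^*\mu_L - 2(n+1)\,\omega_{\mathrm{FS}}(D_{(\cdot)}) = \bigl(1 - 2(n+1)\lambda\bigr)\,p^*\mu_L = 0
\]
by monotonicity. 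The Reeb chords of $\Lambda$ organise into Morse--Bott families parametrised by $L$, one for each non-trivial deck transformation $k\in\Z/(n+1)\setminus\{0\}$; a chord in the $k$-th family caps off in $\CP^n$ by a disk of Maslov index $2k$ and symplectic area $k/(n+1)$. The Bott index formula (as explained in Section \ref{sec:maslov}), combined with $\mu_\Lambda = 0$, then produces a strictly positive Bott degree for every $k = 1,\ldots,n$.

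\textbf{Expected obstacle.} The subtlest step is the Maslov-class comparison: one must pin down exactly the twist between the framing of $\xi_{\mathrm{st}}$ coming from $D^{2n+2}$ and the one obtained by pulling back from $\mathcal{O}(-1)$, which determines the correct $2(n+1)$ factor. Once this is in place, the identification of the cover is essentially formal, and the positive Reeb chord degrees follow mechanically from the Bott index formula together with the monotonicity-controlled Maslov indices of the capping disks.
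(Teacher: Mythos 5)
Your proposal is correct and follows essentially the same route as the paper, which assembles the theorem from Propositions \ref{prop:lift}, \ref{prop:maslov} and \ref{prop:bottdegree}: the cover is classified by the kernel of the action/holonomy homomorphism, and both the Maslov and Bott-degree computations reduce to monotonicity. Three remarks on where you diverge. (i) Your direct embeddedness argument --- distinct preimages of a point of $L$ lift to points separated along the Hopf fibre by a non-trivial holonomy --- is exactly the criterion the paper records after Lemma \ref{lem:prequant}; Proposition \ref{prop:lift} instead cites the main theorem of \cite{LiftingImmersions} for this, so your version is more self-contained. (ii) Your assertion that the holonomy image is all of $\Z/(n+1)$, rather than a proper subgroup (which would yield a smaller canonical cover), requires a relative class of Maslov index two; this is the Audin conjecture for $\CP^n$, a theorem of Cieliebak--Mohnke \cite{CM17} which the paper relies on at the same point, and it should be cited rather than recalled as elementary. (iii) The framing comparison you flag as the subtle step is avoided entirely in Proposition \ref{prop:maslov}: for $A \in H_2(S^{2n+1},\Lambda)$ one has $\int_{\pi_*(A)}\omega = \int_A d\alpha = \int_{\partial A}\alpha = 0$ since $\partial A$ is Legendrian, and then monotonicity gives $\mu_\Lambda(A) = \mu_{\tilde L}(\pi_*(A)) = 0$ with no twist to pin down. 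Your degree count also omits the Reeb chords wrapping more than once around the fibre (lengths $\pi l/(n+1)$ with $l > n$), but these have degree $2l-1 > 0$ as well, so the conclusion is unaffected.
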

\begin{proof}
The lift is constructed in Proposition \ref{prop:lift}. The Maslov class vanishes by Proposition \ref{prop:maslov}. The Bott degree of the chords was finally computed in Proposition \ref{prop:bottdegree}.
\end{proof}
We then turn our eyes to some particular examples of Bohr--Sommerfeld covers of embedded Lagrangians inside $\CP^2.$ Since $S^{2n+1} \setminus \{\pt\}$ is contactomorphic to a Darboux ball $(\R^{2n+1},dz-ydx)$,  see e.g. \cite[Proposition 2.1.8]{Geiges}, \color{black} any Legendrian in the sphere can be described by its associated front projections to the $(z,\mathbf{x})$-plane inside this Darboux ball. Recall that the front-projection recovers the Legendrian embedding. However, since the aforementioned contactomorphism is not strict,  finding the front of the Legendrian inside this  Darboux ball is not an easy task.
For that reason, we instead \blk take a different path, and produce an explicit contact isotopy of the Legendrian into a small contact-form preserving Darboux ball with respect to the prequantisation contact form $\alpha.$ One advantage of placing the Legendrian inside a small Darboux ball is that it makes its Legendrian contact homology as defined by Eliashberg--Givental--Hofer and Chekanov \cite{IntroToSFT, DiffGradedAlgebraLegLinks} computable by alluding to Ekholm's theory of gradient flow trees \cite{MorseFlowTrees}. Legendrian contact homology is a Legendrian invariant which is a homology of a differential graded algebra (DGA for short) called the {\bf Chekanov--Eliashberg algebra}; in the setting considered here the precise construction was carried out in \cite{LegendrianContactPxR} by Ekholm--Etnyre--Sullivan.

We give particular attention to the Legendrian link of the Harvey--Lawson cone from \cite[Example III.3.A]{HarveyLawson}. This is a conical special Lagrangian inside $\C^3,$ whose intersection with the standard contact sphere $S^5$ is a Legendrian torus whose projection to $\CP^2$ is a threefold cover of the monotone Clifford torus. This example was also studied in \cite[Example 4.1]{LiftingImmersions} from the perspective of contact geometry. After a Legendrian isotopy into a small contact Darboux ball, we obtain the Legendrian $\Lambda_{\OP{Cl}}$ whose front projection is shown in Figure \ref{fig:Clifford}; it is the symmetric figure-8 curve with two horizontal cusps rotated along its $z$-axis of symmetry. The Chekanov--Eliashberg algebra of $\Lambda_{\OP{Cl}}$ was computed by the first author in \cite{KnottedLegendrianSurface}; see Section \ref{sec:comp-cliff}.
\begin{figure}[t]
\vspace{3mm}
\centering
\labellist
\pinlabel $x$ at -5 13
\pinlabel $y$ at 72 43
\pinlabel $z$ at 25 93
\pinlabel $\Lambda_{\OP{Cl}}$ at 310 43
\endlabellist
\includegraphics{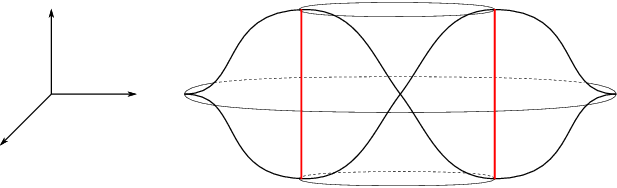}
\caption{Front projection of the Legendrian lift $\Lambda_{\OP{Cl}}$ of the threefold Bohr--Sommerfeld cover of the Clifford torus placed inside a Darboux ball.} \label{fig:Clifford}
\end{figure}
A different front associated to this Legendrian was described by Treumann--Zaslow in \cite[Section 3.1]{TreumannZaslow}, who placed the torus as a kind of satellite around the standard Legendrian sphere. The front inside the standard neighbourhood $J^1S^2$ of the standard Legendrian two-sphere then has a caustic which is the tetrahedron cubic graph on $S^2.$ Our front projection is simpler, at least in the sense that the number of Reeb chords for a small and generic perturbation is minimal for our representative. The computations of the augmentation varieties of $\Lambda_{\OP{Cl}}$ from either of \cite{KnottedLegendrianSurface,TreumannZaslow} show that this Legendrian torus, i.e.~the Legendrian which corresponds to the Harvey--Lawson cone, does not admit any exact Lagrangian filling.\blk

The second example that we study is the Legendrian lift of the threefold canonical Bohr--Sommerfeld cover of the monotone Chekanov torus inside $\CP^2$ \cite{Ch96}. The front projection of a representative $\Lambda_{\OP{Ch}}$ placed inside a small contact Darboux ball is shown in Figure \ref{fig:Chekanov} in Section \ref{sec:examples-plane}. Again the front is symmetric with respect to rotation around the $z$-axis.

Since the Legendrians $\Lambda_{\OP{Cl}}$ and $\Lambda_{\OP{Ch}}$ both satisfy a symmetry of their fronts with respect to rotation around the $z$-axis, they have Lagrangian projections that are well-behaved with respect to the standard Lefschetz fibration $\C^2 \to \C$ given by $(z_1,z_2) \mapsto z_1 \cdot z_2.$ (One must first translate the Legendrians to ensure that the axis of the $S^1$-symmetry is precisely $\{x_1=x_2=0\}.$) Namely, they project to generic immersions of a closed curve inside $\C^*$; see the curves $\gamma_{\OP{Ch}}$ and $\gamma_{\OP{Cl}}$ in the bottom right of Figures \ref{fig:homotopy-clifford} and \ref{fig:homotopy-chekanov}, respectively.  This symmetry of the Legendrians facilitates the task of finding the differentials of their Chekanov--Eliasbherg algebras. The reason is that the projection to $\C^*$ provides a kind of dimensional reduction. In the case when the Legendrian is an $S^1$-spun knot, this dimensional reduction was fully carried out by Ekholm--K\'{a}lm\'{a}n \cite{EkholmKalman}; the DGA of the spun torus is determined by the DGA of the knot. Unfortunately, our situation is more complicated, since the Legendrians are \emph{symmetric} $S^1$-spuns; see Remark \ref{rem:spun}.  \color{black}

We also compute the Chekanov--Eliashberg algebra of $\Lambda_{\OP{Ch}}$ and deduce that it is not Legendrian isotopic to $\Lambda_{\OP{Cl}}$; recall that Chekanov--Schlenk \cite{TwistTori} proved that the underlying Lagrangian torus also is not Hamiltonian isotopic to the Clifford torus inside $\CP^2.$ To conclude:
\begin{thm}[Theorem \ref{thm:frontsoflifts}]
The threefold canonical Bohr--Sommerfeld covers of the Clifford and Chekanov tori in $\CP^2$  have \color{black} embedded Legendrian lifts to the prequantisation space $S^5\to\CP^2$ that are not Legendrian isotopic. Moreover, both tori have vanishing Maslov classes and are subloose but not loose.
\end{thm}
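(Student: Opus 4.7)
The statements on embeddedness of the Legendrian lifts and the vanishing of their Maslov classes are immediate from Theorem \ref{thm:bscovers} applied to the two monotone tori. The substantive content is therefore (i)~separating $\Lambda_{\OP{Cl}}$ and $\Lambda_{\OP{Ch}}$ up to Legendrian isotopy, (ii)~showing that each admits an exact Lagrangian cobordism to the loose Legendrian two-sphere (sublooseness), and (iii)~ruling out looseness.

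For (i), the plan is to compute the Chekanov--Eliashberg algebras of the two Legendrians directly from the front projections in Figures \ref{fig:Clifford} and \ref{fig:Chekanov}. Since both fronts sit inside a small contact Darboux ball, the differential is accessible via Ekholm's gradient flow tree correspondence \cite{MorseFlowTrees}. The key simplification is the $S^1$-rotational symmetry of both fronts about the $z$-axis: after translating so that the symmetry axis coincides with $\{x_1=x_2=0\}$, the Lagrangian projections fibre over the planar curves $\gamma_{\OP{Cl}},\gamma_{\OP{Ch}}\subset\C^*$ under the Lefschetz fibration $(z_1,z_2)\mapsto z_1z_2$. This produces a dimensional reduction in the spirit of Ekholm--K\'alm\'an \cite{EkholmKalman}, but complicated by the fact that the spinning is symmetric rather than free (see Remark \ref{rem:spun}), so the two fixed points of the $S^1$-action contribute Morse--Bott families of Reeb chords whose flow trees must be tracked by hand. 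The Clifford computation is carried out in \cite{KnottedLegendrianSurface}; for $\Lambda_{\OP{Ch}}$ I would apply the same strategy to the combinatorially different curve $\gamma_{\OP{Ch}}$. To distinguish the resulting DGAs I would pass to the augmentation variety, which is invariant under stable tame isomorphism, and verify that the two varieties are not isomorphic.

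For (iii), non-looseness follows from (i) as soon as either DGA is seen to admit an augmentation, since a loose Legendrian has acyclic Chekanov--Eliashberg algebra and consequently an empty augmentation variety. Both varieties will turn out to be non-empty (indeed positive-dimensional), giving non-looseness for free. For (ii), I would produce the exact Lagrangian cobordism to the loose Legendrian sphere by an explicit handle-by-handle simplification of each front: a sequence of Legendrian isotopies followed by elementary (minimum and saddle) Lagrangian handle attachments that collapses the torus front onto that of a standard Legendrian sphere, which can then be stabilised to be loose. The existence of such a cobordism is what the abstract promises, and in fact it is this cobordism (rather than the DGA) that encodes the subloose property.

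The main obstacle is the DGA computation for $\Lambda_{\OP{Ch}}$: since the $S^1$-action is symmetric with two fixed points, the Ekholm--K\'alm\'an reduction cannot be invoked as a black box, and the flow trees that cross or terminate at the fixed locus require a separate Morse--Bott analysis. A secondary difficulty is selecting an invariant robust enough to survive stable tame isomorphism yet sensitive enough to detect the combinatorial difference between $\gamma_{\OP{Cl}}$ and $\gamma_{\OP{Ch}}$; the augmentation variety is the natural and most economical candidate, and this is the path I would pursue.
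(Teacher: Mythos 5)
Your plan for distinguishing the two Legendrians and for excluding looseness matches the paper's strategy: flow-tree computations of the Chekanov--Eliashberg algebras exploiting the rotational symmetry and the Lefschetz fibration $(z_1,z_2)\mapsto z_1z_2$ (with the symmetric-spun complication handled by hand at the cone points), followed by a comparison of the augmentation data; and non-looseness from the existence of augmentations over the Novikov ring, since a loose Legendrian has acyclic DGA with any coefficients. Two caveats. First, you take the fronts of Figures \ref{fig:Clifford} and \ref{fig:Chekanov} as given, but the theorem concerns the Bohr--Sommerfeld covers of the actual tori in $\CP^2$; the bulk of the paper's geometric work (Theorem \ref{thm:frontsoflifts} and Section \ref{sec:frontsoflifts}) is precisely the construction of a Legendrian isotopy into a Darboux ball realising these fronts. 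This is done by a regular homotopy of Bohr--Sommerfeld immersions confined to the hypersurface $\overline{\Sigma}\subset\CP^2$, tracked via its projection under $f$, with a separate local model (Corollary \ref{cor:symmetry}) for the passage over the line at infinity where $f$ degenerates, and with an area check at every double point to keep the lifts embedded; without this step the flow-tree computation has no starting point. Second, the augmentation varieties cut out by $1+\lambda(1+\mu)$ and $1+\lambda(1+\mu)^2$ are both abstractly thrice-punctured spheres, so ``not isomorphic'' must mean not equivalent as subvarieties of $\OP{Sp}(\C[H_1(\Lambda)])\cong(\C^*)^2$ modulo an integral change of basis of $H_1(\Lambda)$ (their Newton polygons are inequivalent); a comparison of abstract varieties would fail.

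The genuine gap is in your step (ii). A cobordism that ``collapses the torus onto the standard Legendrian sphere, which can then be stabilised to be loose'' does not establish sublooseness: stabilisation changes the Legendrian isotopy class and is not implemented by appending an exact Lagrangian cobordism, and if an exact cobordism from either torus to the standard (fillable) sphere existed, concatenating it with the standard filling would yield an exact filling of the torus --- impossible, since the DGAs you compute are acyclic over $\Z_2$ once the Novikov variables are set to $1$. The cobordism must terminate at a sphere that is \emph{already} loose. The paper's construction (Section \ref{sec:subflex}) is a single critical ambient Legendrian $1$-surgery along a Legendrian disc sitting over a cone point of the front, i.e.\ one exact Lagrangian $2$-handle and no further handles; the front of the resulting sphere visibly contains an $S^1$-spun stabilised arc (for $\Lambda_{\OP{Ch}}$ after one spun Reidemeister-II move), hence is loose by Lemma \ref{lem:loose}. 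Note also that this cobordism is not an afterthought in the paper: the extension of spin structures over it is used to pin down the signs in $\partial a=1+\lambda(1+\mu)$ and $\partial a=1+\lambda(1+\mu)^2$, so the geometric construction logically precedes the completion of your step (i).
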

We call a Legendrian subloose if there exists an exact Lagrangian cobordism to a loose Legendrian; see Definition \ref{def:subflex} below in Section \ref{sec:subflex}. Note that sublooseness in particular implies that the Legendrian admits no exact Lagrangian filling.  Subloose but not loose Legendrian submanifolds first appeared in the work of the first author \cite[Remark 2.1.5]{PhdThesis} (even though this terminology was not used there). \color{black}

\begin{rem}
\label{rem:subflexible}
The notion of subloose is intimately connected to the notion of a \emph{subflexible Weinstein domain} introduced in \cite{MurphySiegel} by Murphy--Siegel. In fact, in the course of producing the subflexible Weinstein domain in \cite[Theorem 3.7]{AffineFronts}, Casals--Murphy construct an example of a subloose Legendrian sphere whose front projection is resemblant of the front projection the torus in Figure \ref{fig:Clifford}; see \cite[Figure 32]{AffineFronts}. However, unlike the setting considered here, the Lagrangian cobordism from the non-loose sphere to the loose sphere considered there does not live in the symplectisation, but rather in a nontrivial symplectic cobordism.
\end{rem}
The subloose Legendrians play an important role for understanding the geography of Legendrian submanifolds. More precisely, the Chekanov-Eliashberg algebra of such submanifolds has an unusual behaviour: it  is acyclic with field coefficients, but is linearisable with the so-called Novikov coefficients. There are plenty of subloose Legendrians that can be obtained from the examples in \cite{KnottedLegendrianSurface}, e.g.~obtained by taking cusp-connect sums with appropriate families of Legendrian spheres. However, note that almost nothing is known when it comes to what augmentation varieties are realisable by such examples (in fact this is also the case for more general Legendrian tori).

We also consider the lift of the canonical twofold cover of the monotone product torus $S^1 \times S^1$ inside the monotone quadric surface $(\CP^1 \times \CP^1,\omega_{\OP{FS}}\oplus\omega_{\OP{FS}}).$ It turns out that its Legendrian lift is Legendrian isotopic to the conormal lift of the unknot inside $S^3$; see Theorem \ref{thm:unknot}. Recall that the conormal lift of any submanifold of $\R^n \subset S^n$ has a front projection associated to its representation inside the jet-space $(J^1\R^n,dz-ydx) \cong UT^*\R^n \subset (UT^*S^n,pdq)$ which can be recovered from the geometric properties of the smooth submanifold itself.

By the result of Vianna \cite{Vianna16} there are infinitely many different monotone Lagrangian tori inside $\CP^2$ up to Hamiltonian isotopy. In Section \ref{furtherdirections} we conjecture that the canonical Bohr--Sommerfeld covers of Vianna's different Lagrangian tori have Legendrian lifts in $S^5$ that remain different up to Legendrian isotopy.
\begin{rem}
This question is related to the question in \cite{AurouxInfinmanyLagrR6}, which asks whether the lifts of  Vianna's tori in $\CP^2$ to three-dimensional Lagrangian tori in $S^5 \subset (\C^3,\omega_0)$ remain in distinct Hamiltonian isotopy classes (as Lagrangians in $\C^3$). Note that these three-dimensional  Lagrangian tori can be obtained as the traces of our Legendrian tori under the periodic Reeb flow on $S^5$. 
\end{rem}
Vianna's infinite family of tori exhibit intricate Floer homological properties. As objects in the Fukaya category they constitute the infinite charts on the cluster variety that is the mirror to $\CP^2$ (in the sense of homological mirror symmetry); see recent work of Pascaleff--Tonkonog \cite{PascaleffTonkonog}. One should expect that this structure has a rich counterpart for also the Legendrian lifts of their Bohr--Sommerfeld covers. This paper is the starting point for such an investigation, since we compute the Legendrian invariants for the first two tori that appear in the family. Recall that the first torus in the family is the well-known Clifford torus, whose Bohr--Sommerfeld cover has $\Lambda_{\OP{Cl}}$ as its Legendrian lift. When computing the DGA of the latter Legendrian, we recover Nadler's computation \cite{Nadler} which exhibits the mirror of the one-dimensional pair of pants.

The strategy used here for studying the Legendrian lifts of the threefold covers of the Clifford and Chekanov tori should be possible to apply  \color{black}  to all of the infinitely many monotone Lagrangian tori produced by Vianna \cite{Vianna16}, even though the explicit isotopy into a Darboux ball is more complicated in these cases. However, once this has been found, note that the Legendrian contact homology is possible to compute for Legendrians inside a Darboux ball by using Ekholm's theory of gradient flow trees \cite{MorseFlowTrees}. In this manner interesting enumerative invariants for these Legendrians are thus possible to compute (at least in theory). In particular, in Section \ref{sec:potential} we formulate a conjecture relating the augmentation polynomial and the superpotential. Since the underlying Lagrangian live in pairwise different Hamiltonian isotopy classes, as detected by their different superpotentials, we expect that two different monotone tori also have different Legendrian lifts. We expect that this family of subloose Legendrian tori exhibits interesting properties reflecting the invariants of Vianna's family. For instance, their augmentations varieties should   be related to the superpotentials of the corresponding tori in the following manner: the zero-locus of the superpotential is a suitable threefold unbranched cover of the augmentation variety.

In particular, in view of Vianna's infinite family of monotone tori, it seems believable that there is a very rich family of augmentation varieties associated to different Legendrian tori inside the standard contact sphere, with relations to the infinite number of cluster charts associated to these Lagrangian tori. We plan to study these questions in future investigations.
\blk

Along the way, we also provide the following reasonably general computation:
\begin{thm}[Theorem \ref{thm:lchgeneral}]
\label{lchgeneralintro}
 Let $\varepsilon_0,\varepsilon_1 \colon (\mathcal{A},\partial) \to (\F,0)$ be two graded augmentations for a Legendrian oriented genus $g\geq 0$ surface $\Lambda \subset (\R^5,\xi_{\OP{st}})$ of vanishing Maslov class and with all Reeb chords in positive degrees. Then
\begin{itemize}
\item
when  $\varepsilon_0 = \varepsilon_1$: $$LCH_k^{\varepsilon_0,\varepsilon_1}(\Lambda)=\begin{cases} \F, & k=2,\\ \F^{g}, & k=1, \\ 0, & k\neq 1,2,
\end{cases}$$
\item when $0\leq g\leq 1$ and $\varepsilon_0 \neq \varepsilon_1$: $LCH_k^{\varepsilon_0,\varepsilon_1}(\Lambda) = 0$ for all $k,$
\end{itemize}
is satisfied for the bilinearised Legendrian contact homology groups.
\end{thm}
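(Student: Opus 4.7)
My plan is to combine the positivity of the Reeb chord degrees with the Ekholm--Etnyre--Sabloff duality long exact sequence, so that the whole calculation becomes essentially formal.

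First, note that the hypothesis of strictly positive Reeb chord degrees (which is well-defined because the Maslov class vanishes) severely restricts augmentations. A graded augmentation $\varepsilon\co(\mathcal{A},\partial)\to(\F,0)$ is by definition a grading-preserving unital DGA morphism, so since $\F$ is concentrated in degree zero and every chord generator of $\mathcal{A}$ has positive degree, $\varepsilon$ must vanish on each chord. Thus the trivial augmentation $\varepsilon_{\mathrm{triv}}$ is the unique graded augmentation, and any two graded augmentations $\varepsilon_0,\varepsilon_1$ necessarily agree. In particular the second bullet of the statement (the case $\varepsilon_0\neq\varepsilon_1$) is vacuous, which disposes of it for all $g$.

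For the first bullet I set $\varepsilon := \varepsilon_0=\varepsilon_1=\varepsilon_{\mathrm{triv}}$ and invoke the Ekholm--Etnyre--Sabloff duality long exact sequence for Legendrian surfaces in $(\R^5,\xi_{\OP{st}})$, which in the present setting takes the form
\[
\cdots \to H^k(\Lambda;\F) \to LCH^k_\varepsilon(\Lambda) \to LCH^\varepsilon_{1-k}(\Lambda) \to H^{k+1}(\Lambda;\F) \to \cdots .
\]
Positivity of chord degrees makes both $LCH^\varepsilon_k(\Lambda)$ and $LCH^k_\varepsilon(\Lambda)$ vanish for all $k\le 0$. Substituting $H^{\ast}(\Lambda;\F) = (\F,\F^{2g},\F)$ into the sequence, it collapses to yield the isomorphisms
\[
LCH^\varepsilon_2(\Lambda)\cong H^0(\Lambda;\F)\cong \F,\qquad LCH^2_\varepsilon(\Lambda)\cong H^2(\Lambda;\F)\cong\F,
\]
a short exact sequence
\[
0 \to LCH^\varepsilon_1(\Lambda) \to H^1(\Lambda;\F) \to LCH^1_\varepsilon(\Lambda) \to 0,
\]
and the vanishing of all remaining groups by degree reasons.

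To finish, I apply the universal coefficient theorem: working over the field $\F$ one has $\dim_\F LCH^\varepsilon_k(\Lambda) = \dim_\F LCH^k_\varepsilon(\Lambda)$ in every degree. Combined with $\dim_\F H^1(\Lambda;\F)=2g$, the short exact sequence displayed above forces $\dim_\F LCH^\varepsilon_1(\Lambda) = g$, exactly the claimed answer. The only substantive point that needs checking is that the Ekholm--Etnyre--Sabloff long exact sequence indeed applies in this setting---a closed, oriented Legendrian surface of vanishing Maslov class in $(\R^5,\xi_{\OP{st}})$ equipped with an augmentation---but this is standard; no hands-on holomorphic curve count is required, since all the analytic input is packaged into the duality sequence.
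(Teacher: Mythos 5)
Your treatment of the case $\varepsilon_0=\varepsilon_1$ is essentially the paper's argument: the duality long exact sequence, the vanishing of $LCH_k$ and $LCH^k$ in non-positive degrees forced by the positivity of the chord gradings, and the equality $\dim_\F LCH^{\varepsilon}_k=\dim_\F LCH_{\varepsilon}^k$ over a field (the paper's Equation \eqref{eq:adjoint}) combine in exactly the way you describe to give $\F$, $\F^g$, and $0$ in degrees $2$, $1$, and otherwise.

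However, your disposal of the second bullet contains a genuine error. You argue that positivity of the chord degrees forces every graded augmentation to be the trivial one, so that $\varepsilon_0\neq\varepsilon_1$ never occurs. This overlooks the coefficient ring: the Chekanov--Eliashberg algebra here is taken with Novikov coefficients $R=\F[H_1(\Lambda)]$, and since the Maslov class vanishes these generators sit in degree \emph{zero}. A graded augmentation does vanish on all Reeb chord generators, but it also carries the data of a unital algebra map $\F[H_1(\Lambda)]\to\F$, i.e.~a rank-one local system, subject to the relations imposed by $\varepsilon\circ\partial=0$. For $g=1$ there are in general many such augmentations --- for $\Lambda_{\OP{Cl}}$ they form the pair of pants $\{1+\lambda(1+\mu)=0\}$ --- so the case $\varepsilon_0\neq\varepsilon_1$ is far from vacuous (it is vacuous only for $g=0$, where $H_1(\Lambda)=0$). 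To handle it one needs the \emph{bilinearised} duality long exact sequence of Bourgeois--Chantraine, in which the singular homology of $\Sigma_g$ is replaced by the homology $H_*(\Sigma_g;\varepsilon_0\otimes\varepsilon_1^{-1})$ twisted by the local system $\alpha\mapsto\varepsilon_0(\alpha)\varepsilon_1(\alpha)^{-1}$. The key points, both missing from your argument, are: (i) two distinct graded augmentations automatically induce distinct local systems, precisely because under the positivity hypothesis the degree-zero part of the DGA is exactly the Novikov ring; and (ii) for $g=1$ the homology of the torus with a nontrivial rank-one local system vanishes in all degrees, so the twisted duality sequence together with the degree vanishing forces $LCH_k^{\varepsilon_0,\varepsilon_1}(\Lambda)=0$ for all $k$. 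This is also why the statement restricts the second bullet to $0\leq g\leq 1$: for higher genus a nontrivial local system can still have nonvanishing twisted $H_1$.
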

(In other words, in the case $g=1,$ the $LCH_*$ groups for augmentations behave as the $\OP{Ext}$-groups for skyscraper sheaves on an algebraic curve under the above assumptions.) In addition, we obtain restrictions on the variety of augmentations of a Legendrian torus that satisfies the assumptions of Theorem \ref{thm:lchgeneral}.
\begin{thm}[Theorem \ref{thm:augvar}]
Let $\Lambda \subset (\R^5,\xi_{\OP{st}})$ be a Legendrian torus of vanishing Maslov class and with all Reeb chords in positive degrees. Its augmentation variety over $\C$ is then either empty,  or cut out by a single polynomial. \color{black}
\end{thm}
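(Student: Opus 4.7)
The plan is to combine the algebraic relation coming from $\partial^2 = 0$ with both the diagonal and off-diagonal cases of Theorem \ref{thm:lchgeneral} at $g = 1$. Since every Reeb chord has strictly positive Maslov degree, a graded augmentation $\varepsilon \colon (\mathcal{A}, \partial) \to (\C, 0)$ must send every chord to zero, and is therefore determined by its restriction to $\C[H_1(\Lambda)] \cong \C[x^{\pm 1}, y^{\pm 1}]$, i.e., by a character $\rho \in (\C^*)^2$. For a degree-$1$ chord $b$, the differential $\partial b$ lies in degree $0$ and is thus a pure coefficient: $\partial b = Q_b \in \Z[H_1]$. Writing $c_k$ for the number of Reeb chords of degree $k$, the augmentation variety is
\[
V = \{\rho \in (\C^*)^2 : \rho(Q_b) = 0 \text{ for every degree-$1$ chord } b\} \subset (\C^*)^2.
\]
For a degree-$2$ chord $c$, the only chord-words that can appear in $\partial c$ are single degree-$1$ chords, so $\partial c = \sum_b P_{c,b} \cdot b$ with $P_{c,b} \in \Z[H_1]$. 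Arranging these into the $c_1 \times c_2$ matrix $M = (P_{c,b})$ over $\Z[H_1]$, the DGA identity $\partial^2 c = 0$ becomes $\sum_b P_{c,b} Q_b = 0$ in $\Z[H_1]$, i.e., the vector $\vec{Q} = (Q_b)_b$ lies in the left null module of $M$.

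The diagonal case of Theorem \ref{thm:lchgeneral} with $g=1$ gives $\dim LCH_1^\rho = 1$ for every $\rho\in V$, which says exactly that $\rank M(\rho) = c_1-1$ there. Hence the generic rank of $M$ over $\C(x,y)$ is either $c_1-1$ or $c_1$. If it is $c_1$, then the left null module is trivial over the function field, so $\vec Q = 0$ identically and $V = (\C^*)^2$ is cut out by the zero polynomial. Otherwise the left null module of $M$ has generic rank one, with a generator $\vec w_0 \in \C[H_1]^{c_1}$ whose components have gcd $1$; writing $\vec Q = R\cdot \vec w_0$ for a unique Laurent polynomial $R$, set-theoretically
\[
V = V(\vec Q) = V(R) \cup V(\vec w_0).
\]
Since the components of $\vec w_0$ have no common factor, $V(\vec w_0)$ has codimension at least two in $(\C^*)^2$, hence consists of at most finitely many points.

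The remaining — and most delicate — step is to prove $V(\vec w_0) \subseteq V(R)$, so that $V = V(R)$ is indeed cut out by the single Laurent polynomial $R$; equivalently, to rule out isolated points in the augmentation variety. The plan is to apply the bilinearized case of Theorem \ref{thm:lchgeneral} (the case $g=1$, $\varepsilon_0 \ne \varepsilon_1$) to a pair $(\rho_\ast, \rho_1)$ consisting of a hypothetical isolated $\rho_\ast \in V \setminus V(R)$ and any second augmentation $\rho_1 \in V$; the theorem forces the bilinearized differential $\partial_2^{\rho_\ast, \rho_1}$ to be surjective with full rank $c_1$, which is one more than the diagonal rank $c_1-1$. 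Unpacking $\partial_2^{\rho_\ast, \rho_1}$ in terms of the splitting $[\Delta] = [\Delta_L] + [\Delta_R]$ of the homology class of each rigid disc at its unique negative puncture — so that each contribution becomes $\rho_\ast([\Delta_L])\, \rho_1([\Delta_R]) \cdot b$ — produces an explicit interpolation between $M(\rho_\ast)$ and $M(\rho_1)$. Combined with the identity $\vec Q = R \vec w_0$, this interpolation forces $R(\rho_\ast) = 0$ whenever $\vec w_0(\rho_\ast) = 0$, a contradiction. The degenerate edge cases in which $V(R)$ is empty or $V$ would consist of a single point are handled by combining this with the first-order tangent bound $\rank d\vec Q(\rho) \leq 1$ valid at every $\rho\in V$, which follows from differentiating $\vec Q^T M = 0$ and using $\rank M(\rho) = c_1 - 1$.

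The main obstacle is precisely this last step. While the linearized Legendrian contact homology pins down the rank of $M$ at augmentations cleanly, translating the bilinearized vanishing result into a genuine exclusion of isolated points requires a careful analysis of the two-variable dependence of the bilinearized differential, and in particular of how the splitting of disc homology classes at the negative puncture interacts with the left-null-module structure of $M$ extracted in the previous step.
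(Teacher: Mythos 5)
Your setup (steps 1--3) is sound and genuinely different from the paper's: you organise the data globally into the matrix $M=(P_{c,b})$ and the vector $\vec Q$, use $\partial^2=0$ together with the rank condition $\operatorname{rank}M(\rho)=c_1-1$ forced by $LCH_1^{\rho,\rho}\cong\F$ at $g=1$, and factor $\vec Q=R\cdot\vec w_0$ over the UFD $\C[H_1(\Lambda)]$. The paper instead argues locally at each augmentation: it chooses bases adapted to $\partial^{\varepsilon_0,\varepsilon_0}$, localises the coefficient ring at $P^1_1\cdots P^m_m$, and performs an inductive change of variables killing $\widetilde\partial a_i$ for $i\ge 1$, so that the variety is locally principal, cut out by $\widetilde\partial a_0$; it also proves separately, via the map $LCH_1^{\varepsilon_0,\varepsilon_0}\to H_1(\T^2;\Z_2)$ in the duality exact sequence and its identification with $\nabla\partial(c)$, that the variety is not all of $(\C^*)^2$. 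Your route omits this last point entirely (declaring $(\C^*)^2$ to be ``cut out by the zero polynomial'' is weaker than what the paper establishes), but that is the secondary issue.

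The primary issue is that your step 4 --- the only step carrying real content beyond linear algebra --- is not a proof but a plan, and the plan as described does not work. The bilinearised matrix $\partial_2^{\rho_*,\rho_1}$ is \emph{not} a specialisation of the commutative matrix $M$: it depends on how each disc's homology class splits at the negative puncture, data invisible to $M$ and to $\vec Q=R\vec w_0$, so the full-rank statement from the off-diagonal case of Theorem \ref{thm:lchgeneral} does not obviously constrain $R(\rho_*)$; moreover when $V$ is a single point there is no second augmentation to pair with. Your fallback, the bound $\operatorname{rank}d\vec Q(\rho)\le 1$, provably fails to exclude isolated points (e.g.\ $Q_1=(x-1)^2$, $Q_2=(y-1)^2$ has an isolated common zero with vanishing Jacobian). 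The good news is that your framework closes the gap by purely commutative means, with no bilinearised input: at any $\rho\in V$ pick $c_1-1$ rows of $M$ that are independent at $\rho$; the vector of signed maximal minors of that submatrix is a polynomial element of $\ker M$ which is nonzero at $\rho$, hence a multiple $S\cdot\vec w_0$ of your primitive generator, forcing $\vec w_0(\rho)\neq 0$. Thus $V\cap V(\vec w_0)=\emptyset$, so $V(\vec w_0)=\emptyset$ and $V=V(R)$ outright. As submitted, however, the proposal leaves its decisive step unproved.
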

{The restriction on the degree of Reeb chords given by Theorem \ref{thm:bscovers} implies that the latter two theorems are applicable} to the Legendrian lift of the canonical Bohr--Sommerfeld cover of any monotone Lagrangian torus in $\CP^2.$

Then in Section \ref{sec:subflex} we construct an  infinite family of non-regular exact Lagrangian caps in the symplectisation of the contact Euclidean space of all odd dimensions $\geq 5$. This provides a partial negative answer to the question of Eliashberg--Ganatra--Lazarev \cite{FlexibleLagrangians, EliashbergSurvey}; see Question \ref{questionfromGanatra-Eliashberg-Lazarev}.

\begin{thm}[Theorem \ref{NonRegularCapsofSurfaces}]
  For any $g>0$ and $k_1,\ldots,k_r \ge 0$ there exists infinitely many different Legendrian isotopy classes of subloose Legendrian embeddings $\Lambda \subset (\R^{2k_1+\ldots+2k_r+5},\xi_{\OP{st}})$ of the manifold $S^{k_1} \times \ldots \times S^{k_r} \times \Sigma_g,$ where $\Sigma_g$ denotes the surface of genus $g,$ which:
  \begin{itemize}
  \item have vanishing Maslov classes;
  \item have Chekanov--Eliashberg algebras with Novikov coefficients $R=\C[H_1(\Lambda)]$ that admit (0-graded) augmentations; and
  \item admit orientable exact Lagrangian caps inside the symplectisation with vanishing Maslov classes.
  \end{itemize}
In particular, none of these Legendrian surfaces are loose.
\end{thm}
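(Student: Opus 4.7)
I would assemble the family in three stages: (1) build a single closed subloose Legendrian surface of genus $g$ in $(\R^5,\xi_{\OP{st}})$ with the prescribed invariants; (2) modify in the same smooth isotopy class by cusp-connect sum with a sequence of distinguishing Legendrian spheres to get infinitely many Legendrian isotopy classes; (3) pick up the factors $S^{k_i}$ by iterated Legendrian spinning. The starting brick is the subloose Legendrian torus $\Lambda_{\OP{Cl}}\subset(\R^5,\xi_{\OP{st}})$ supplied by Theorem \ref{thm:frontsoflifts}, which has vanishing Maslov class and admits a $0$-graded augmentation of its Chekanov--Eliashberg algebra over the Novikov ring $R=\C[H_1(T^2)]$.

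For stage (1), a base surface $\Lambda^0_g\subset(\R^5,\xi_{\OP{st}})$ of any genus $g\geq 1$ is built by $(g-1)$ iterated cusp-connect sums of $\Lambda_{\OP{Cl}}$ with standard Legendrian tori realized by Lagrangian $1$-handle attachments in the symplectisation. Sublooseness is preserved by concatenating the cobordism $V_0\colon\Lambda_{\OP{Cl}}\to L_{\OP{loose}}$ with the cobordism realizing the connect sum, producing a cobordism from $\Lambda^0_g$ to a loose Legendrian. The Novikov augmentation and the vanishing of the Maslov class propagate through the standard sum formula for Chekanov--Eliashberg DGAs under cusp-connect sum.

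For stage (2), I would cusp-connect sum $\Lambda^0_g$ with a sequence $\{S_n\}_{n\geq 1}$ of subloose Legendrian spheres in $(\R^5,\xi_{\OP{st}})$ that all have vanishing Maslov class and admit $0$-graded Novikov augmentations, but with pairwise non-isomorphic bilinearised Chekanov--Eliashberg homologies (producible from appropriate modifications of the constructions in \cite{PhdThesis,KnottedLegendrianSurface}). Applying the connect-sum formula for DGAs, the resulting $\Lambda^n_g:=\Lambda^0_g\#_{\OP{cusp}}S_n$ are pairwise non-isotopic as Legendrians. For stage (3), I would then apply $r$ iterated Legendrian spinning operations of dimensions $k_1,\dots,k_r$, in the sense of Ekholm--K\'alm\'an \cite{EkholmKalman} and its higher-dimensional analogues. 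After first translating $\Lambda^n_g$ away from the axis of rotation, the resulting Legendrian has smooth type $S^{k_1}\times\cdots\times S^{k_r}\times\Sigma_g$ inside $(\R^{2k_1+\cdots+2k_r+5},\xi_{\OP{st}})$.

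All four desired properties must then be checked to persist at each step: (i) the Maslov class is additive under cusp-connect sum and unchanged by spinning, so remains zero; (ii) the Novikov augmentation extends by the explicit DGA formula for spinning and by the sum formula for connect sum; (iii) an orientable exact Lagrangian cap is obtained by concatenating the cobordism to a loose Legendrian with a cap of that loose Legendrian (available by the $h$-principle for loose Legendrians), and spinning of a cap gives a cap of the spun Legendrian; (iv) loose Legendrians have acyclic DGAs even with Novikov coefficients, so the existence of a non-trivial $0$-graded augmentation over $R$ excludes looseness at every stage. The hard part will be the bookkeeping through all of these constructions: verifying that the Novikov augmentation really survives each cusp-connect sum and each spinning operation in a compatible way, and, crucially, that the bilinearised contact homologies of the $\Lambda^n_g$ remain pairwise distinct after taking products with spheres, so that the Legendrian isotopy classes remain distinct in the higher-dimensional ambient contact vector space.
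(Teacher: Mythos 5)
Your overall architecture (cusp-connect sums of $\Lambda_{\OP{Cl}}$ to realise the genus and the sublooseness, a distinguishing family of spheres attached by further cusp-connect sums, spherical spinning for the $S^{k_i}$ factors, and caps obtained by concatenating the cobordism to a loose Legendrian with an Eliashberg--Murphy cap) coincides with the paper's. The gap is in your stage (2), which is the heart of the theorem. You posit an infinite sequence $\{S_n\}$ of \emph{subloose} Legendrian spheres with pairwise non-isomorphic bilinearised Chekanov--Eliashberg homologies, ``producible from appropriate modifications'' of earlier constructions; this is precisely the kind of infinite-family statement the theorem is trying to establish, and you give no mechanism for producing it or for verifying the pairwise non-isomorphism. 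Worse, insisting that the $S_n$ be subloose actively removes the tool that makes the invariants computable: a subloose sphere has no exact filling, so Seidel's isomorphism is unavailable and its (bi)linearised homology must be accessed through Novikov augmentations by hand. You also do not explain why distinctness of the invariants of the $S_n$ forces distinctness of the connect sums $\Lambda^0_g\#S_n$ --- the set of bilinearised homologies of a connect sum mixes contributions from both summands over all pairs of augmentations, so ``pairwise non-isomorphic'' for the spheres does not formally transfer.

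The paper resolves exactly this point differently and more concretely: the distinguishing spheres $\Lambda_N$ are \emph{exactly fillable} (cusp-connect sums of $N$ copies of the sphere from \cite[Example 9.4.2]{Cthulhu}), with fillings $L_N$ satisfying $\dim\bigoplus_i H_i(L_N;\C)\ge N$. Seidel's isomorphism converts this into a lower bound on the rank of $LCH^*_{\varepsilon_{L_N}}(\Lambda_N)$; the long exact sequence of \cite[Theorem 1.1]{Cthulhu} for the handle-attachment cobordism transfers a rank bound of $N-1$ to some augmentation of the connect sum with $\Sigma_g$; the K\"unneth formula for spherical front spins multiplies this rank by $H_*(S^{k_i})$; and finally the elementary upper bound of linearised $LCH$ by the number of Reeb chords of any generic representative forces infinitely many isotopy classes as $N\to\infty$. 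The lesson is that a single unbounded numerical invariant with an a priori geometric upper bound does the distinguishing cleanly, whereas an abstract ``pairwise non-isomorphic'' hypothesis does not propagate through connect sums and spins without further argument. If you replace your subloose $S_n$ by the fillable $\Lambda_N$ and track total rank rather than isomorphism type, your outline closes up into the paper's proof.
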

In addition, we provide a refinement of the question of Eliashberg--Ganatra--Lazarev, see Question \ref{refinedquestionfromGanatra-Eliashberg-Lazarev}.

\subsection{Criterion for looseness in open books}
We end with some discussions about criteria for a Legendrian to be stabilised, i.e.~to admit a Darboux chart in which the Legendrian coincides with a standard stabilisation. In the case when the Legendrian has nonempty boundary, this Darboux chart is by definition disjoint from the boundary. Recall that, in the case of a contact manifold of dimension at least five, a stabilised Legendrian is the same as a loose Legendrian.

The Legendrian disc admits a stabilisation in arbitrary dimension, as was shown in \cite[Remark 7.22(2)]{SteinWeinstein} by Cieliebak--Eliashberg. In particular, when the contact manifold has dimension greater than three, any Legendrian disc is loose. Since the Legendrian isotopy class of a Legendrian with boundary is preserved under the Legendrian version of boundary connected sum with a disc, we can now immediately conclude that
\begin{prop}
\label{prop:openloose}
Any Legendrian with nonempty boundary admits a stabilisation; in particular, in a contact manifold of dimension strictly greater than three, such a Legendrian is loose.
\end{prop}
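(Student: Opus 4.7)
The plan is to combine the two inputs mentioned in the paragraph preceding the proposition, namely the existence, in arbitrary dimension, of a Legendrian disc with a stabilisation (due to Cieliebak--Eliashberg), and the fact that boundary connected sum with such a disc preserves the Legendrian isotopy class of a Legendrian with boundary. The goal is to transport the stabilisation chart living in the disc into the given Legendrian $\Lambda$ via the ambient contact isotopy realising the connect-sum invariance.

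First I would fix a model Legendrian disc $D$ sitting in a Darboux ball and equipped with a standard stabilisation chart $U$ in its interior, so that $U \cap \partial D = \emptyset$; such a $D$ exists in any dimension by Cieliebak--Eliashberg. Given a Legendrian $\Lambda$ with nonempty boundary, I then choose points $p \in \partial \Lambda$ and $q \in \partial D$ with $q \notin U$, and perform the Legendrian boundary connected sum along a small arc joining $p$ to $q$ through small contact Darboux neighbourhoods. Call the resulting Legendrian $\Lambda'$. Because the connect-sum arc is chosen disjoint from $U$, the stabilisation chart $U$ embeds unchanged inside $\Lambda'$, and it is disjoint from $\partial \Lambda' = \partial \Lambda$; hence $\Lambda'$ admits a stabilisation. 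Invoking the invariance property recalled above, $\Lambda'$ is Legendrian isotopic to $\Lambda$, so the ambient contact isotopy transports $U$ to a Darboux chart disjoint from $\partial \Lambda$ in which $\Lambda$ coincides with a standard stabilisation; this is the first claim. The second claim is then immediate: in a contact manifold of dimension strictly greater than three, the existence of such a stabilisation chart is the definition of looseness.

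The main point requiring care is the invariance of Legendrian isotopy type under boundary connected sum with a disc. This should reduce to a model computation inside a Darboux ball around the connect-sum arc: extending $\Lambda$ across $\partial \Lambda$ by a small Legendrian half-disc is Legendrian isotopic to $\Lambda$ itself (by shrinking the half-disc back to the boundary), and attaching the full disc $D$ is accomplished by the analogous shrinking within the Darboux chart. I expect the verification of this model statement, including the fact that the shrinking can be realised by an ambient contact isotopy compactly supported in the Darboux ball, to be the only nontrivial step; once it is in place, the remainder of the argument is purely formal.
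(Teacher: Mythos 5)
Your proposal is correct and follows exactly the argument the paper itself gives (in the paragraph immediately preceding the proposition): take the stabilised Legendrian disc of Cieliebak--Eliashberg, form the Legendrian boundary connected sum with it away from the stabilisation chart, and use the invariance of the Legendrian isotopy class under boundary connect sum with a disc to transport the chart back to $\Lambda$. You in fact supply slightly more detail than the paper, which treats the connect-sum invariance as a known fact rather than verifying the model computation you outline.
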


In Section \ref{sec:disc} we investigate the case of the ``standard Legendrian disc''
$$\Lambda_0 \coloneqq \{x_{n+1} \ge 0\} \cap \Re \C^{n+1} \cap S^{2n+1} \subset (S^{2n+1},\alpha_{\OP{st}})$$
which arises as a subset of the standard Legendrian sphere. Namely, we construct an explicit Legendrian isotopy which is fixed near the boundary after which an explicit chart which contains a stabilisation can be seen. To describe this Legendrian isotopy we again use the standard Lefschetz fibration $\C^n \to \C$.

The case of the standard Legendrian disc inside the sphere is interesting, since it provides a new proof of Casals--Murphy result from \cite[Proposition 2.9]{AffineFronts} or, rather, the mild generalisation of it which is stated in Proposition \ref{prop:CM} below.

Proposition \ref{prop:CM} provides a criterion for looseness in terms of the relation between the Legendrian and an open book decomposition; we refer to \cite{Giroux} for the definition of an open book in the setting of contact geometry. Our setup is as follows: Assume that a contact manifold $(Y,\xi)$ admits an open book decomposition with page given by a Weinstein domain $(W,d\eta)$, and that $W$ contains a standard critical Weinstein handle
$$ (DT^*B^n,2p\,dq+q\,dp) \hookrightarrow (W,d\eta),$$
whose boundary satisfies
$$ST^*B^n=\partial(DT^*B^n)  \hookrightarrow \partial W,$$
along which the monodromy of the open book is trivial. The triviality of the monodromy on this subset of the page implies that there exists a contact embedding
$$\partial(D^2 \times DT^*B^n) \subset (Y,\xi),$$
where the contact structure on the former is induced by the restriction of the Liouville form on the domain
$$ (D^2 \times DT^*B^n,r^2\,d\theta\oplus p\,dq)$$
to its contact boundary (after smoothing the corners). Note that $\partial(D^2 \times DT^*B^n)$ is an \emph{open} contact manifold which admits the structure of an open book with trivial monodromy, whose page is a standard critical Weinstein handle (considered as an \emph{open} symplectic manifold with non-empty boundary where, as usual, the boundary becomes the binding of the open book).
\begin{prop}[Casals--Murphy]
\label{prop:CM}
Consider a Legendrian submanifold $\Lambda \subset (Y,\xi)$ of a contact open book of the above form. If this Legendrian is contained in a single page when intersected with the subset $\partial(D^2 \times DT^*B^n) \subset Y$, and if it moreover coincides with the core disc inside that page, i.e.~if
$$\Lambda \cap \partial(D^2 \times DT^*B^n) = \{1\} \times 0_{B^n} \subset Y$$
is satisfied, then $\Lambda$ admits a stabilisation.

In particular, when $Y=\partial(D^2 \times W)$ is the trivial open book with page a Weinstein domain $W$, any Legendrian which is contained inside a single page $\{1\} \times W \subset Y$, where it moreover is assumed to intersect some cocore of $W$ transversely in a single point, admits a stabilisation.\blk
\end{prop}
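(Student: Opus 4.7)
The plan is to reduce Proposition \ref{prop:CM} to the single model case of the standard Legendrian disc $\Lambda_0 \subset (S^{2n+1},\alpha_{\OP{st}})$, for which an explicit Legendrian isotopy revealing a stabilisation chart is constructed in Section \ref{sec:disc}. The first observation is that the subset $\partial(D^2 \times DT^*B^n) \subset Y$ appearing in the hypothesis is itself an open contact manifold with a preferred trivial open book structure whose page is the standard critical Weinstein handle $DT^*B^n$, and that the identical local model sits inside the standard contact sphere: $(S^{2n+1},\alpha_{\OP{st}})$ carries an open book with page $DT^*B^n$ and trivial monodromy, for which the standard Legendrian disc $\Lambda_0$ is precisely $\{1\} \times 0_{B^n}$ in a single page. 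Consequently one obtains a contact embedding of a neighborhood of $\Lambda_0 \subset S^{2n+1}$ into $\partial(D^2 \times DT^*B^n) \subset Y$, and by the assumption $\Lambda \cap \partial(D^2 \times DT^*B^n) = \{1\} \times 0_{B^n}$ this embedding carries $\Lambda_0$ onto the relevant portion of $\Lambda$.

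Next, I would push forward the Legendrian isotopy produced in Section \ref{sec:disc} from $\Lambda_0$ to $\Lambda$ along this contact embedding. The model isotopy is, by construction, compactly supported in the interior of the standard neighborhood, in particular fixed near the binding $ST^*B^n$ and near the boundary of the page. Therefore, it extends by the identity to a global Legendrian isotopy of $\Lambda$ inside $Y$, leaving the Legendrian unchanged away from the standard-disc neighborhood. The Darboux chart exhibiting a standard stabilisation for $\Lambda_0$ after this isotopy then transports to a Darboux chart in $Y$ in which the isotoped $\Lambda$ coincides with a standard stabilisation, proving the main claim of the proposition.

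For the special case $Y = \partial(D^2 \times W)$, with $\Lambda \subset \{1\} \times W$ meeting a cocore $C$ of a critical Weinstein handle transversely at one point, a Weinstein neighborhood of $C$ together with the critical point it emanates from provides a local chart $DT^*B^n \hookrightarrow W$ in which $C$ becomes a cotangent fibre. Since the exact Lagrangian $\Lambda$ crosses $C$ transversely at one point, it can be written locally as the graph of an exact one-form, and hence, via a compactly supported exact Lagrangian isotopy within this chart, be put into the standard position $\{1\} \times 0_{B^n}$ inside $\partial(D^2 \times DT^*B^n) \subset Y$, reducing the statement to the first part. The main obstacle is the explicit construction in Section \ref{sec:disc} rather than any of the reductions above; the gluing argument and the Weinstein handle identification sketched here are essentially formal, with the only remaining care being to ensure that the cutoffs are made compatible with the triviality of the monodromy along $ST^*B^n$ and with the exactness of $\Lambda$ in the second case.
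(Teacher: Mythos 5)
Your proposal is correct and follows essentially the same route as the paper: identify $\partial(D^2 \times DT^*B^n)$ with $(S^{2n+1}\setminus\partial\Lambda_0,\xi_{\OP{st}})$ so that $\{1\}\times 0_{B^n}$ becomes the standard open Legendrian disc, implant a boundary-fixed stabilising isotopy of that disc, and reduce the cocore case to the first part by a compactly supported Hamiltonian normalisation near the cocore. The only (immaterial) difference is that you source the stabilisation of the standard disc from the explicit isotopy of Section \ref{sec:disc}, whereas the written proof invokes Proposition \ref{prop:openloose} via Cieliebak--Eliashberg; the paper itself advertises Section \ref{sec:disc} as an alternative proof of precisely this step.
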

\begin{proof}
The open contact manifold $\partial(D^2 \times DT^*B^n)$ can be identified with $(S^{2n+1} \setminus \partial \Lambda_0,\xi_{\OP{st}})$, under which $\{1\} \times 0_{B^n}$ can be assumed to be identified with the standard open Legendrian ball $\Lambda_0 \setminus \partial \Lambda_0$. The first part of the result is then a direct consequence of Proposition \ref{prop:openloose}.

For the last claim, one just needs the following standard fact. Assume that a Lagrangian intersects the cocore of a handle transversely in a single point. After a Hamiltonian isotopy supported near the cocore, the Lagrangian can then be assumed to coincide with the core disc of the same handle inside some small neighbourhood of the cocore.\blk
\end{proof}
\blk

\section*{Acknowledgements}
This project started when the authors visited the University of Ottawa during the workshop on ``Lagrangian cobordisms,''  and we are grateful to the organisers of the workshop and the University of Ottawa for their hospitality. We would also like to thank Emmy Murphy who told us the expectation of the relation between the Clifford torus and the front considered here, Otto van Koert for useful discussions and Fr\'{e}d\'{e}ric Bourgeois for helpful  comments  on  the  first  version  of  the  paper  which led to the correction of Theorem \ref{lchgeneralintro}.
Finally, we would like to thank both anonymous referees for suggesting quite a few useful improvements, and also for pointing out inaccuracies and mistakes.  \color{black}

The first author is supported by the grant KAW 2016.0198 from the Knut and Alice Wallenberg Foundation.
The second author is supported by the ERC Consolidator Grant 646649 ``SymplecticEinstein'', GA\v{C}R EXPRO Grant 19-28628X and 4EU+/20/F3/21 Minigrant.

\section{Background}

 Here we provide recollections of different geometric notions which are crucial to this work, as well as to the Chekanov--Eliashberg algebra, which is a modern Legendrian invariant. \color{black}

\subsection{Prequantisation spaces and  Bohr--Sommerfeld Lagrangians}

A {\bf prequantisation space} is a contact manifold $(E,\alpha)$
equipped with a contact form $\alpha$ whose Reeb flow defines a free
$S^1$-action. It follows that $(M=E/S^1,\omega=d\alpha)$
naturally is a symplectic manifold via symplectic reduction. Phrased
differently, the $S^1$-bundle $\pi \colon E \to M$ has a connection 1-form
$\alpha$ with a curvature 2-form given by $\omega$. Recall that all prequantisation spaces are strongly symplectic fillable by a symplectic $D^2$-bundle over $(M,\omega)$. (In general, however, they are not fillable by exact symplectic manifolds.) \color{black}

We fix the convention that the lengths of the corresponding simple
Reeb orbits on $(E,\alpha)$ all are equal to $\pi,$ i.e.~the fibre is canonically identified with $S^1=\R/\pi\Z$ when parametrised by the Reeb flow. In this case the latter symplectic form satisfies
\begin{equation}
\label{eq:prequant}
[\omega] =\pi\cdot c_1(E) \in \pi\cdot \im(H^2(M,\Z) \to H^2(M,\R))
\end{equation}
where $c_1(E)$ denotes the first Chern class of the $S^1$-bundle.

It is straight-forward from the definitions that an $n$-dimensional immersion $\iota \colon L \looparrowright (M,\omega)$ is Lagrangian if and only if the pullback connection 1-form $\iota^*\alpha$ on the pullback bundle $\iota^*E \to L$ is flat.\blk

\begin{defn}
\label{defn:BS}
In the above setting, a Lagrangian immersion \blk $\iota \colon L \looparrowright (M,\omega)$ satisfies the {\em Bohr--Sommerfeld condition} for the prequantisation $S^1$-bundle $(E,\alpha) \to (M,\omega)$ if the pullback connection 1-form $\iota^*\alpha$ on the pullback bundle $\iota^*E \to L$ is trivial.
\end{defn}

\begin{rem}
\label{rem:sympaction}
In the case when $M$ is simply connected, the above property translates to the condition that any $\eta \in \iota_*(H_1(L)) \subset H_1(\iota(L))$ bounds integral symplectic area; by this we mean that
$$\int_u \omega \in \pi \cdot \Z$$
is satisfied for any $u \in H_2(M,\iota(L))$ on which the connecting homomorphism takes the value $\delta(u)=\eta$.

Furthermore, by the integrality assumption in Formula \eqref{eq:prequant}, in order to determine whether all $u \in H_2(M,\iota(L))$ with $\delta(u)=\eta$ have integral symplectic area, it suffices to verify the property for a single such class $u$.
\end{rem}
\blk When it is clear to which prequantisation bundle we are referring, we will in the following simply say that a Lagrangian immersion is Bohr--Sommerfeld.

The following lemma is immediate from the definition.
\begin{lem}
  \label{lem:prequant}
Any closed immersed Legendrian submanifold $\Lambda \subset (E,\alpha)$ projects to a Bohr--Sommerfeld Lagrangian immersion $\pi(\Lambda) \subset (M,\omega)$ and, conversely, any Bohr--Sommerfeld immersion lifts under $\pi$ to a Legendrian immersion.  For a Bohr--Sommerfeld immersion of a connected manifold, the lift is uniquely determined up to a global application of the Reeb flow.
\end{lem}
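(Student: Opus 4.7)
The plan is to exploit the standard relation $\pi^*\omega=d\alpha$ together with the fact that a Legendrian lift is the same data as a horizontal (parallel) section of the pullback $S^1$-bundle $\iota^*E \to L$ equipped with the connection 1-form $\iota^*\alpha$.

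First I would handle the ``Legendrian projects to Bohr--Sommerfeld'' direction. Let $j\co \Lambda \looparrowright E$ be a Legendrian immersion. Since $T_pj(T_p\Lambda) \subset \ker\alpha$ and $d\pi$ restricts to an isomorphism from $\ker\alpha$ onto $TM$, the composition $\iota = \pi\circ j \co \Lambda \looparrowright M$ is an immersion; moreover $\iota^*\omega = j^*\pi^*\omega = j^*(d\alpha) = d(j^*\alpha) = 0$, so $\iota$ is Lagrangian. Now view $j$ tautologically as a smooth section of the pullback $S^1$-bundle $\iota^*E\to\Lambda$; by construction the pullback connection 1-form on $\iota^*E$ coincides with $\iota^*\alpha$, and the Legendrian condition $j^*\alpha=0$ says precisely that this section is horizontal. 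The existence of a global horizontal section in a flat (since $d(\iota^*\alpha)=\iota^*\omega=0$) $S^1$-bundle forces the holonomy to vanish, i.e.~$\iota^*\alpha$ is a trivial connection form.

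For the converse, suppose $\iota\co L \looparrowright M$ is a Bohr--Sommerfeld Lagrangian immersion. By definition the connection $\iota^*\alpha$ on the pullback bundle $\iota^*E\to L$ is trivial, so it admits a horizontal section $s\co L \to \iota^*E$. Composing $s$ with the canonical bundle map $\iota^*E \to E$ over $\iota$ produces a smooth map $j\co L \to E$ with $\pi\circ j=\iota$, and the horizontality of $s$ translates back to $j^*\alpha=0$; since $\iota$ is an immersion and $\pi\circ j=\iota$, $j$ is automatically an immersion as well. Thus $j$ is the desired Legendrian lift.

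Finally, for uniqueness up to the Reeb flow when $L$ is connected: any two horizontal sections $s_0,s_1$ of a trivial $S^1$-bundle with connection over a connected base differ by the fibrewise action of a single element of the structure group $S^1$. Under our normalisation the $S^1$-action on $E$ is exactly the time-$t$ Reeb flow (for $t\in\R/\pi\Z$), so $s_1$ is obtained from $s_0$ by a global application of the Reeb flow, and the two corresponding Legendrian lifts differ by the same global Reeb translation. There is no real obstacle here; the only point needing mild care is the identification of the pullback connection on $\iota^*E$ with $\iota^*\alpha$ and the observation that the Legendrian lift is tautologically a section of this pullback bundle, which is what makes the three parts of the statement essentially formal consequences of the definitions.
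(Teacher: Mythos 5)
Your proposal is correct and is essentially the paper's own argument written out in full detail: the paper's proof is a two-sentence sketch observing that the pullback connection is flat by the Lagrangian condition, that a lift is the same as a horizontal section (equivalently, trivial monodromy), and that flat connections are classified by their monodromy. Your identification of a Legendrian lift with a horizontal section of $(\iota^*E,\iota^*\alpha)$ and the uniqueness of such sections up to the global $S^1$-action (the Reeb flow) is exactly the intended reasoning.
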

\begin{proof}
Since the connection is flat along any Lagrangian submanifold (the curvature vanishes by the Lagrangian condition), the existence of a lift is equivalent to the triviality of the monodromy. Recall that flat connections are classified by their monodromy.
\end{proof}

It is immediate that the Legendrian lift of a Bohr--Sommerfeld immersion $\iota \colon L \looparrowright (M,\omega)$ of a connected manifold $L$ is an embedding whenever it satisfies the following additional property: the connection 1-form $\alpha$ has a \emph{non-trivial} monodromy along any path of the form $\iota(\gamma) \subset M$, where $\gamma$ is a path in $L$ which connects two different preimages of a double point of $\iota$.
\begin{rem}
In the case when $M$ is simply connected, this property can be rephrased as the requirement that a disc in $\pi_2(M,\iota(L))$ whose boundary lifts to a path in $L$ which connects two different preimages of a double point of $\iota$ has symplectic area which does not take any of the values $\pi\cdot\Z$.
\end{rem}
It can be seen that a small generic perturbation through Bohr--Sommerfeld Lagrangian immersions makes the lift embedded. \color{black}

Even though Bohr--Sommerfeld immersions generically have embedded Legendrian lifts, it is important to note that the appearance of intersections in one-parameter families of Legendrian lifts is a generic phenomenon, and in general one can not expect to get rid of them.

Recall that the integrality condition on the symplectic form given in Formula \eqref{eq:prequant} is sufficient to guarantee the existence of a prequantisation bundle with a connection one-form whose curvature is equal to precisely that symplectic form. The isomorphism class of the bundle depends on the choice of integer lift of the integral symplectic form; when there is no torsion this lift is unique, and so is the bundle. For simplicity we will in the following assume that $\pi_1(M)=0.$ In this case the connection one-form is uniquely determined by the integral symplectic form itself up to isomorphism. \color{black}

The prequantisation spaces that we will study here are the following.
\begin{example}
\label{ex:prequant}
\begin{enumerate}
\item The standard round contact sphere
$$ S^{2n-1} \subset \C^n $$ equipped with the coordinates $(z_1=x_1+iy_1,\dots,z_n=x_n+iy_n)$
and the contact form
$$ \alpha_{\OP{st}} := \frac{1}{2}\sum_{i=1}^n(x_idy_i-y_idx_i).$$
The corresponding symplectic reduction is the quotient under the Hopf map, which gives $M=\CP^{n-1}$ equipped with the {\bf Fubini--Study K\"{a}hler form} $\omega_{\mathrm{FS}}$
for which the symplectic area of a line has been normalised to
$\int_\ell \omega_{\mathrm{FS}}=\pi.$
This is also the length of a minimal periodic Reeb orbit for the above contact form.
\item The unit cotangent bundle $(UT^*S^n,pdq)$ of the round $n$-sphere of radius $1/2$ has a completely periodic Reeb flow with minimal length of a periodic orbit equal to $\pi$.
The symplectic reduction gives $UT^*S^n$ the structure of a prequantisation space over the monotone projective quadric $(n-1)$-fold. In particular, when $n=3,$ this unit cotangent bundle is a prequantisation space over the monotone quadric surface $(\CP^1 \times \CP^1,\omega_{\mathrm{FS}} \oplus \omega_{\mathrm{FS}})$.
\end{enumerate}
\end{example}

The contact structures of the prequantisation spaces in the above examples both have vanishing first Chern classes $c_1(E,\alpha) \in H^2(E).$ This is of course obvious for the sphere, while it is also a general fact for all unit cotangent bundles. For completeness we also recall the following standard result: simply connected prequantisation spaces over monotone symplectic manifolds (the above prequantisation spaces are all of this type) have first Chern classes $c_1(E,\alpha)$ which are torsion; see Lemma \ref{lem:chern}.  \blk

Here follows basic examples of Lagrangian Bohr--Sommerfeld immersions inside the projective space and quadric.
 \color{black}

 \begin{example}
  \begin{enumerate}
\item The standard Legendrian sphere $\Lambda_{\OP{std}} \subset (S^{2n+1},\alpha_{\OP{st}})$ is the intersection of the standard contact sphere $S^{2n+1}$ and the real part $\Re\C^{n+1}.$ This Legendrian is the lift of the twofold cover of $\RP^n \subset \CP^n,$ and this two-fold cover is thus is a Bohr--Sommerfeld immersion. It can be readily seen to be Legendrian isotopic to a representative inside a contact Darboux ball which has a rotationally symmetric front projection given by the ``flying saucer'' with precisely one Reeb chord, and singularities consisting of a spherical cusp edge. The Lagrangian projection of this representative is the so-called (exact Lagrangian) Whitney immersion of a sphere with a single transverse double point.
\item The anti-diagonal
$$\{(z,\overline{z})\} \subset (\CP^1 \times \CP^1,\omega_{\mathrm{FS}} \oplus \omega_{\mathrm{FS}})$$
is an embedded Lagrangian sphere inside the monotone projective quadratic surface, and it is thus automatically Bohr--Sommerfeld.

More generally, all monotone projective quadrics contain Lagrangian spheres. Recall that the affine complex quadric of dimension $n-1$ is symplectomorphic to $(T^*S^{n-1},d(pdq))$  and thus it contains an embedded Lagrangian sphere: the zero section.  The affine quadric sits inside the projective quadric as the complement of a divisor (a quadric of one dimension less), and thus we get a Lagrangian sphere in the projective quadric as well. As described in Part (2) of Example \ref{ex:prequant}, there is a prequantisation bundle over the $n-1$--dimensional projective quadric with total space $UT^*S^n$. The Legendrian lift to $UT^*S^n$ of the aforementioned Lagrangian sphere (i.e.~the zero-section in the affine part)  \color{black}  can be readily seen to be Legendrian isotopic to a unit cotangent fibre $UT^*_{\OP{pt}}S^n.$
\end{enumerate}
\end{example}  \color{black}

\subsection{The Chekanov--Eliashberg algebra}
Here we give a brief recollection of the Chekanov--Eliashberg algebra $\mathcal{A}(\Lambda)$ of a Legendrian $\Lambda$ in the contact manifold $(\C^n \times \R,dz-ydx)$ as developed in \cite{LegendrianContactPxR}. More precisely, the algebraic formalism from \cite{KnotContHom}   will be used, which there was called the ``fully noncommutative Legendrian DGA.'' Roughly speaking, the noncommutativity refers to the fact that, unlike in the original definition of the invariant, the subalgebra $R=\F[H_1(\Lambda)]$ of ``Novikov coefficients'' is non-central by construction (i.e.~it does not commute with the Reeb chord generators); also c.f.~\cite{Noncomm}.
\begin{rem} Recall that we mainly are interested in the case when $\Lambda$ is a torus. Since the algebra $\F[H_1(\Lambda)]$ then is quasi-isomorphic to the DGA of chains on the based loop space, in this case the fully noncommutative Legendrian DGA computes the partially wrapped Fukaya category of the ball $B^4$ with $\Lambda \subset \partial B^4$ used as a stop; see recent work \cite{EkholmLekili} by Ekholm--Lekili.
\end{rem}

We proceed to give the precise construction of the algebra \color{black} . Let $\Lambda \subset \C^n \times \R$ be a closed Legendrian submanifold with a set $\mathcal{Q}$ of Reeb chords, which are assumed to be finite.
We can thus consider the free graded $R$-bimodule $A$ generated by $\mathcal{Q},$ where the grading is induced by the Conley--Zehnder index as in \cite{LegendrianContactPxR}. The underlying unital algebra of the Chekanov--Eliashberg DGA in our setting is the tensor ring
$$\mathcal{A}(\Lambda) \coloneqq \bigoplus_{k \ge 0} A^{\otimes_R k}$$
where
\begin{align*}
& A^{\otimes_R 0}=R,\\
& A^{\otimes_R k} \coloneqq \underbrace{A \otimes_R \cdots \otimes_R A}_k, \:\: k \ge 1,
\end{align*}
all are $R$-bimodules. The contributions from the homotopy classes of the pseudoholomorphic discs in the definition of the differential $\partial$ is then determined by auxiliary choices of \emph{capping paths} from each of the two endpoints of every Reeb chord to a fixed based point $\star \in \Lambda.$ We refer to \cite{KnotContHom,Noncomm} for more details.

Recall that an {\bf augmentation} is a unital DGA-morphism
$$ \varepsilon \colon \mathcal{A}(\Lambda) \to \F$$
which thus satisfies $\varepsilon \circ \partial=0.$ Here we will only consider {\bf graded augmentations} which by definition vanish on all generators in nonzero degrees. Observe that an augmentation restricts to a unital algebra map
$$ \varepsilon \colon \F[H_1(\Lambda)] \to \F,$$
which can be identified with a local system in $\F$ when $\pi_1(\Lambda)$ is abelian. Following \cite{FramedKnotContactHomology} we define the {\bf augmentation variety} to be the Zariski closure of the set of points in $\OP{Sp}(\F[H_1(\Lambda)]),$ i.e.~unital $\F$-algebra maps   $\F[H_1(\Lambda)] \to \F$,  \color{black}  which extend to an augmentation via the canonical inclusion $\F[H_1(\Lambda)] \subset \mathcal{A}(\Lambda).$

\begin{rem} While the set of augmentations naturally has the structure of a (possibly non-reduced) algebraic variety -- this is the so called ``total augmentation variety'' -- this is not necessarily the case for the set of algebra maps $\F[H_1(\Lambda)] \to \F$ which \emph{admit extensions} to an augmentation. The latter is the image of the full augmentation variety under a projection map, and since the image under such a projection is not necessarily itself a variety, we are forced to take the Zariski closure above to ensure that we obtain an algebraic subset.
\end{rem}

Given a pair of augmentations, Bourgeois and Chantraine defined the {\bf bilinearised Legendrian contact homology} in \cite{BilinearisedLCH}, which is a chain complex with underlying vector space   $\F^\mathcal{Q}$.  \color{black}  In the setting of the fully noncommutative Legendrian DGA we refer to \cite{Noncomm} for more details.

\subsection{Lagrangian cobordisms and subloose Legendrian submanifolds}
\label{sec:sublfex}
The class of loose Legendrian submanifolds of dimension two and more was introduced by Murphy in \cite{LooseLegendrianEmbeddings}. By definition a Legendrian is loose if one can find a so-called \emph{loose chart}, which is a contact Darboux ball where the Legendrian is in a particular position. In the aforementioned article an h-principle was established for this class of Legendrians, which implies that their classification up to Legendrian isotopy is determined by their formal Legendrian isotopy classes.

The Chekanov--Eliashberg algebra of a loose Legendrian can be computed, in some suitable standard model, to be acyclic. This is equivalent to the unit being a boundary. Note that an acyclic DGA admits no augmentations.

\begin{defn}
Given two closed Legendrian submanifolds $\Lambda_{-}$ and $\Lambda_{+}$ of a contact manifold $(M,\alpha)$.
An {\em exact Lagrangian cobordism from $\Lambda_-$ to $\Lambda_+$}  is a properly embedded submanifold $L \subset (\R \times M, d(e^t \alpha))$ in the symplectisation such that for some $T>0$
\begin{itemize}
 \item[(i)] $L\cap (- \infty, -T) \times M= (-\infty, -T)\times \Lambda_{-}$ and  $L\cap (T, +\infty)\times M = (T,+\infty)\times \Lambda_{+}$,
 \item[(ii)] $L \cap [-T,T] \times M$ is compact.
\item[(iii)]  there is a function $f_L\in C^{\infty}(L)$ such that
\begin{itemize}
\item $e^{t}\alpha|_{TL} = df_L$,
\item $f_L|_{(-\infty,-T)\times \Lambda_{-}}$, $f_L|_{(T,+\infty)\times \Lambda_{+}}$ are constant functions.
\end{itemize}
\end{itemize}
We call $(T,+\infty)\times \Lambda_+$ and $(-\infty,-T) \times \Lambda_-$ the {\em positive end} and the {\em negative end} of $L$, respectively.
\end{defn}

Recall the following fact established in \cite{RationalSFT}: an exact Lagrangian cobordism $L$ from $\Lambda_-$ to $\Lambda_+$ gives rise to a unital DGA morphism
$$ \Phi \colon (\mathcal{A}(\Lambda_+),\partial_+) \to (\mathcal{A}(\Lambda_-),\partial_-)$$
defined by an appropriate count of rigid pseudoholomorphic discs with boundary on the cobordism. Even though the result of \cite{RationalSFT} is written for $\Z_2$-coefficients only, it admits a natural extension to more general coefficients such as the fully noncommutative DGA with ``Novikov coefficients'' considered here. One caveat is that the Novikov coefficients must be taken from the cobordism, i.e.~we must take $R=\mathbb F[H_1(L)]$, in the definition of the Chekanov--Eliashberg algebras of both $\Lambda_\pm$; see e.g.~\cite[Section 8.1]{Cthulhu} for more details. Observe that there are geometrically induced unital algebra maps $\F[H_1(\Lambda_\pm)] \to \F[H_1(L)]$, and that the DGA with coefficients in $\F[H_1(L)]$ is determined by the DGA with coefficients in $\F[H_1(\Lambda_\pm)]$ via the obvious extensions of these algebra maps to the entire DGAs.
 \color{black}

When the field $\F$ has characteristic different from two, we need to fix the additional data of a \emph{spin structure} on the Legendrians $\Lambda_\pm$ in order to be able to define their Chekanov--Eliashberg algebras with these coefficients; see \cite{Ekholm:Orientations}. Additionally, in order to make the cobordism map well-defined in these characteristics, we must also fix a spin structure on the cobordism which restricts to the chosen spin structures on its ends. See recent work \cite{Karlsson:Cob} by Karlsson.
 \color{black}

In particular, note that an augmentation of $\Lambda_-$ can be \emph{pulled back} to an augmentation of $\Lambda_+$,  with the caveat that coefficients and spin structures must be chosen in the manner described above. \color{black}  In the case of an exact Lagrangian {\bf filling} of $\Lambda,$ i.e.~an exact cobordism with $\Lambda_-=\emptyset$ and $\Lambda_+=\Lambda,$ we obtained an augmentation of $\mathcal{A}(\Lambda)$ with coefficients in $R=\F.$

\begin{defn}
\label{def:subflex}
A Legendrian submanifold $\Lambda$ is said to be {\em subloose} if there exists an exact Lagrangian cobordism from $\Lambda$ to a loose Legendrian submanifold.
\end{defn}
\begin{remark}
Observe that according to the definition of a subloose Legendrian, every loose Legendrian submanifold  is subloose (taking a trivial exact Lagrangian cobordism $\R\times \Lambda$ of a loose Legendrian $\Lambda$, we see that $\Lambda$ is subloose).
\end{remark}

\begin{prop}
\label{sub-looseacyclic}

A subloose Legendrian has an acyclic Chekanov--Eliashberg algebra with coefficients in $R=\Z_2,$ i.e.~when Novikov coefficients are not used. In particular, it admits no exact Lagrangian fillings.

If there exists a spin structure on the subloose Legendrian which extends to a spin structure on some Lagrangian cobordism to a loose Legendrian, then the Chekanov--Eliashberg algebra with coefficients in a field $R=\F$ of arbitrary characteristic is acyclic as well.
\end{prop}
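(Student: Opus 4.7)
The plan is to invoke functoriality of the Chekanov--Eliashberg algebra under exact Lagrangian cobordisms, and reduce the statement to the analogous (already known) property for loose Legendrians. Concretely, suppose $L \subset (\R \times M, d(e^t\alpha))$ is an exact Lagrangian cobordism from $\Lambda_- = \Lambda$ to a loose Legendrian $\Lambda_+ = \Lambda_{\OP{loose}}$. As recalled in the paragraph preceding the statement, any such cobordism induces a unital DGA morphism
\[
\Phi_L \co (\mathcal{A}(\Lambda_{\OP{loose}}), \partial_+) \to (\mathcal{A}(\Lambda), \partial_-),
\]
where the coefficient ring can be taken to be $R = \Z_2$ directly, or $R = \F$ provided that a spin structure on $L$ restricting to the prescribed spin structure on $\Lambda$ (and inducing some spin structure on $\Lambda_{\OP{loose}}$) has been fixed as in \cite{Ekholm:Orientations, Karlsson:Cob}.

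First I would recall the known fact that any loose Legendrian submanifold, in any of these coefficient setups, has an acyclic Chekanov--Eliashberg algebra; this follows because a loose chart admits a model computation (e.g.~via Ekholm's flow tree method or by directly analysing the stabilisation) in which the unit $1 \in \mathcal{A}(\Lambda_{\OP{loose}})$ is exhibited as the boundary of some element $x$, independently of the choice of spin structure. Thus $1 = \partial_+ x$ inside $\mathcal{A}(\Lambda_{\OP{loose}})$.

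Next, applying the cobordism map yields
\[
1 = \Phi_L(1) = \Phi_L(\partial_+ x) = \partial_- \Phi_L(x)
\]
inside $\mathcal{A}(\Lambda)$, where the first equality uses that $\Phi_L$ is a \emph{unital} morphism and the third uses that it is a chain map. Hence $\mathcal{A}(\Lambda)$ is acyclic as well, which proves the first assertion in both the $\Z_2$-coefficient case and in the spin case. The second assertion is then immediate: an exact Lagrangian filling $F$ of $\Lambda$ would, by the same functoriality argument applied to the empty negative end, produce a unital DGA morphism $\mathcal{A}(\Lambda) \to \mathcal{A}(\emptyset) = \F$, i.e.~a graded augmentation; but an acyclic DGA admits no augmentation, since any augmentation $\varepsilon$ would force $1 = \varepsilon(1) = \varepsilon(\partial_- \Phi_L(x)) = 0$, a contradiction.

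The only non-routine issue, which is where I would be most careful, is checking that the spin hypothesis in the second half of the statement is exactly what is needed to make $\Phi_L$ well-defined with $\F$-coefficients of arbitrary characteristic: one must verify that the spin structure prescribed on $\Lambda$ extends across $L$, that its restriction to the loose end admits a model in which the loose DGA is still acyclic, and that the orientation conventions of \cite{Ekholm:Orientations,Karlsson:Cob} indeed produce a chain map over $\F$ in that setting. Once this compatibility is checked, the argument above goes through verbatim.
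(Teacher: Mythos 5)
Your proposal is correct and follows essentially the same route as the paper: both arguments push the acyclicity of the loose end's DGA through the unital cobordism morphism $\Phi_L\co\mathcal{A}(\Lambda_{\OP{loose}})\to\mathcal{A}(\Lambda)$, the only cosmetic difference being that you exhibit the unit as a boundary at the chain level while the paper phrases it as a unital ring map out of the trivial ring on homology. Your closing remarks on the spin-structure compatibility and on fillings inducing augmentations match the discussion the paper gives just before the proposition.
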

\begin{proof}
Consider the map in homology that is induced by the DGA morphism associated to the exact Lagrangian cobordism. This becomes a map from the trivial ring to the homology of the Chekanov--Eliashberg algebra of the subloose Legendrian. (By the above discussion it is important that we do not use Novikov coefficients here.) The result then follows from the following purely algebraic fact: if some given ring admits a unital map from the trivial ring, i.e.~a ring for which $1=0$, then the target ring must be trivial as well.
\end{proof}

In Section \ref{sec:subflex} we show that the Legendrian tori $\Lambda_{\OP{Cl}}$ and $\Lambda_{\OP{Ch}}$ both are subloose. This, however, does not mean that their Chekanov--Eliashberg algebras are uninteresting. As computed in Section \ref{sec:dga}, they even admit augmentations when Novikov coefficients are used;    in particular, they are not loose themselves.  \color{black}    Meanwhile, a priori they must have acyclic DGAs when coefficients $R=\F$ are used (i.e.~without the Novikov parameter) in view of Proposition \ref{sub-looseacyclic}. \color{black}

\section{Lagrangian isotopies in hypersurfaces}
\label{sec:lefschetz}

In this section we show how certain hypersurfaces of symplectic manifolds can be used as a tool for constructing Lagrangians as well as Lagrangian isotopies, by making the Lagrangians confined to the hypersurface. The point is that that Lagrangians which are contained inside well-behaved hypersurfaces can be understood via different forms of dimensional reduction.

Recall that the symplectic form restricted to a hypersurface $\Sigma \subset (M,\omega)$ of a symplectic manifold has a one-dimensional kernel $\ker (\omega|_{T\Sigma}) \subset T\Sigma$ called the {\bf characteristic distribution}. Any Lagrangian submanifold $L \subset \Sigma$ which is contained entirely in the hypersurface must be tangent to the characteristic distribution.  We will focus on the following two different techniques for studying such Lagrangians:

\begin{itemize}
\item[($\alpha$)]In the case when characteristic distribution integrates to an action on $\Sigma$ by the group $S^1$, symplectic reduction produces a fibre bundle
$$S^1 \to \Sigma \xrightarrow{f} \Sigma/S^1$$
and the Lagrangian submanifold $L \subset \Sigma$ projects to a Lagrangian of dimension $\dim L -1$ inside the symplectic base $\Sigma/S^1$ (the base is endowed with the symplectic structure induced by the symplectic reduction). Conversely, any Lagrangian in the base produces a Lagrangian inside $\Sigma$ by taking the full preimage under $f$.
\item[($\beta$)] In the case when there is a symplectic fibration $g \colon \Sigma \to S^1$, i.e.~a smooth fibre bundle for which the fibres are symplectic, then the Lagrangian submanifold $L \subset \Sigma$ can be uniquely recovered by its intersection with any given fibre; the latter is a Lagrangian submanifold of the fibre of dimension $\dim L -1$, and its intersection with any other fibre is determined by parallel transport along the characteristic distribution. Conversely, any Lagrangian in the fibre gives rise to a Lagrangian inside $\Sigma$ by taking its trace under the parallel transport.
\end{itemize}

For our purposes we will mainly be interested in a particular smooth, properly embedded real codimension-one hypersurface
$$\Sigma_{(1,1),1}^{(1,1)} \subset \CP^2 \setminus \{[0:0:1]\}$$
whose affine part
$$\Sigma_{(1,1),1}^o = \Sigma_{(1,1),1}^{(1,1)} \cap \C^2$$
is a cone, and where $\Sigma_{(1,1),1}^{(1,1)}$ intersects the line at infinity $\ell_\infty$ in an embedded closed curve $$S_{(1,1),1}^{(1,1)} = \Sigma_{(1,1),1}^{(1,1)} \cap \ell_\infty \cong S^1.$$
We carry out the construction in the subsequent paragraphs. All of our Lagrangian isotopies will take place inside of $\Sigma_{(1,1),1}^{(1,1)}$. Inside the affine part $\Sigma_{(1,1),1}^o$ we will use the symplectic reduction approach ($\alpha$) to understand our Lagrangian submanifolds. Since this symplectic reduction becomes singular near the line $\ell_\infty \subset \CP^2$ at infinity, approach ($\beta$) based upon a symplectic fibration will instead be used there.

First we construct the hypersurface; this will be done in a slightly greater generality than needed for our applications. For any fixed
$$r>0 \:\:\: \text{and} \:\:\: \mathbf{a}=(a_1,a_2)\in\Z_{>0}^2,$$
where the latter is a primitive integer vector, we start by constructing the three-dimensional hypersurface with boundary
$$ \Sigma_{\mathbf{a},r} \coloneqq \{s\cdot (e^{i\theta_1} \sqrt{a_1}, e^{i\theta_2}\sqrt{a_2}); \: s\in \R_{>0}, \theta_1,\theta_2 \in \R\} \cap D^4_r \subset (\C^*)^2 \cap D^4_r$$
inside the closed 4-disc. Note that this hypersurface is properly embedded, conical, and smooth with boundary inside $D^4_r \setminus \{0\}$; furthermore, it is endowed with an obvious foliation by punctured pseudoholomorphic lines. In addition it is foliated by the Lagrangian product tori
$$S^1_{s\sqrt{a_1}} \times S^1_{s\sqrt{a_2}} \:\:\: \text{ for all } s \in \left(0,r/\sqrt{a_1+a_2}\right].$$
In particular, the boundary
$$\partial \Sigma_{\mathbf{a},r}=S^1_{r\sqrt{a_1/(a_1+a_2)}} \times S^1_{r\sqrt{a_2/(a_1+a_2)}}$$
is itself a Lagrangian torus.

Recall that the closed symplectic manifold $\CP^2$ can be obtained from $D^4$ by performing a symplectic reduction $\partial D^4 \to \CP^1$ of its boundary. Observe that the hypersurface with boundary
$$(\Sigma_{(1,1),1},\partial \Sigma_{(1,1),1}) \subset (D^4,\partial D^4)$$
has an image inside $\CP^2$ which again is a smooth hypersurface, but where the latter has empty boundary (the symplectic reduction collapses the torus boundary to an embedding of $S^1$).

We generalise this symplectic reduction in the following manner. Start with the hypersurface $\Sigma_{\mathbf{a},r}$ with non-empty boundary. Then fix the choice of a vector
$$\mathbf{n}=(n_1,n_2) \in \Z_{> 0}^2$$
subject to the condition $n_1a_1+n_2a_2\neq0$ where, in addition, we assume that $\mathbf{n}$ is a primitive integral vector. Then consider the domain
$$H^{\mathbf{n}}_{\mathbf{a},r}=\left\{ |z_1|^2/n_2+|z_2|^2/n_1 \le \frac{r^2}{a_1+a_2}(a_1/n_2+a_2/n_1); \: z_i \in \C^*\right\} \subset (\C^*)^2$$
for which we obtain the inclusion
$$(\Sigma_{\mathbf{a},r},\partial \Sigma_{\mathbf{a},r}) \subset (H^{\mathbf{n}}_{\mathbf{a},r},\partial H^{\mathbf{n}}_{\mathbf{a},r}).$$
Performing a symplectic reduction on the boundary $\partial H^{\mathbf{n}}_{\mathbf{a},r}$ of the domain again produces a 4-dimensional symplectic manifold, and the image of $\Sigma_{\mathbf{a},r}$ inside the latter is again a smooth hypersurface without boundary that we in the following denote by $\Sigma_{\mathbf{a},r}^{\mathbf{n}}$.

\begin{example}
Our sought hypersurface is $\Sigma_{(1,1),1}^{(1,1)}$. Note that this hypersurface naturally lives inside $\CP^2$, since it is the image of $\Sigma_{(1,1),1}$ under the symplectic reduction $D^4 \to \CP^2$ of the boundary $\partial D^4=S^3$; see Figure \ref{fig:moment_polytope}. \color{black}
\end{example}

We proceed to investigate the hypersurface $\Sigma_{\mathbf{a},r}^{\mathbf{n}}$ obtained after collapsing the boundary of $\Sigma_{\mathbf{a},r}$ under the symplectic reduction. Recall that the boundary $\partial \Sigma_{\mathbf{a},r}$ is a Lagrangian torus. The manifold $\Sigma_{\mathbf{a},r}^{\mathbf{n}}$ is obtained from $\Sigma_{\mathbf{a},r}$ by taking a quotient of its torus boundary
$$\partial\Sigma_{\mathbf{a},r}=S^1_{r\sqrt{a_1/(a_1+a_2)}} \times S^1_{r\sqrt{a_2/(a_1+a_2)}}$$
under the natural action of the subgroup
$$ S^1 \cong \{(e^{in_1t},e^{in_2t}); \: t \in \R\} \subset S^1 \times S^1$$
(i.e.~component-wise multiplication). Recall that $\Sigma_{\mathbf{a},r}^{\mathbf{n}}$ can be obtained as smooth hypersurface of the symplectic manifold obtained from $H^{\mathbf{n}}_{\mathbf{a},r}$ via symplectic reduction of its boundary. Consequently, the restriction of $\omega_0$ to
$$\Sigma_{\mathbf{a},r}^o \coloneqq \Sigma_{\mathbf{a},r} \setminus \partial \Sigma_{\mathbf{a},r} \subset \Sigma_{\mathbf{a},r}^{\mathbf{n}}$$
has an extension to a smooth closed two-form defined on the entire hypersurface $\Sigma_{\mathbf{a},r}^{\mathbf{n}}$. In Subsection \ref{sec:beta} below we will further investigate the behaviour of this two-form near the closed curve
$$S_{\mathbf{a},r}^{\mathbf{n}} \coloneqq \Sigma_{\mathbf{a},r}^{\mathbf{n}} \setminus \Sigma_{\mathbf{a},r}^o$$
which is the image of the torus $\partial\Sigma_{\mathbf{a},r}$ under the quotient.
 \color{black}

\subsection{Symplectic reduction via a Lefschetz fibration (Approach ($\alpha$))}
The Bohr--Somerfeld Lagrangians inside $(\CP^n,\omega_{\OP{FS}})$ that we consider here, as well as their Legendrian lifts to the prequantisation space $(S^{2n+1},\alpha_{\OP{st}}),$ can be efficiently understood via their images under the standard Lefschetz fibration
\begin{gather*}\C^n \to \C\\
  (z_1,\ldots,z_n) \mapsto z_1^2+\ldots+z_n^2
\end{gather*}
in an affine chart of the symplectic base $\CP^n.$ To consider Lagrangians in relation to a Lefschetz fibration has turned out to be a useful perspective, which goes back to the work \cite{Eliashberg:Georgia} by Eliashberg--Polterovich. We call a Lagrangian which projects to a curve under a symplectic fibration {\bf compatible} with the fibration.

We proceed to give details in the case $n=2;$ the general case needed in Section \ref{sec:disc} is treated analogously in the end of this subsection.

In dimension $n=2$ we choose coordinates so that the Lefschetz fibration becomes $(z_1,z_2) \mapsto z_1\cdot z_2.$ In order to fix notation, we endow $\C^2$ with the standard linear symplectic form $\omega_0=dx_1\wedge dy_1+dx_2 \wedge dy_2.$

The symplectic fibration $(z_1,z_2) \mapsto z_1^{a_1}\cdot z_2^{a_2}$ with a singularity at the origin restricts to a surjective and smooth $S^1$-fibration
$$ f_{\mathbf{a}} \colon \Sigma_{{\mathbf{a},r}} \to D_{\widetilde{r}}^2 \setminus \{0\}$$
onto the punctured disc of radius
 $$\widetilde{r}=\sqrt{a_1}^{a_1}\sqrt{a_2}^{a_2}(r/\sqrt{a_1+a_2})^{a_1+a_2}.$$
 \color{black}
The following lemma shows that the bundle-projection $f_{\mathbf{a}}$ of the $S^1$-fibration is a symplectic reduction; thus the Lagrangians that are contained inside $\Sigma_{\mathbf{a},r}$ can be studied via their images under $f_{\mathbf{a}}$. \color{black}

\begin{lem}
\label{lem:char}
The characteristic distribution $\ker(\omega|_{T\Sigma_{\mathbf{a},r}})$ is spanned by the infinitesimal generator of the action
$$ (z_1,z_2) \mapsto (e^{ia_2t}z_1,e^{-ia_1t}z_2),$$
by $t \in \R$. Consequently, the characteristic distribution is
tangent to the $S^1$-fibres of $f_{\mathbf{a}}.$
\end{lem}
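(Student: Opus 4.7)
The plan is to exploit the fact that $\Sigma_{\mathbf{a},r}$, on its smooth locus inside $(\C^*)^2$, is a level set of the moment map for a certain circle action, and then to show that the generator of this action spans the characteristic distribution essentially ``for free.''

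First I would cut out $\Sigma_{\mathbf{a},r}$ inside $(\C^*)^2$ as the zero-locus of the defining function
\[ F(z_1,z_2) \coloneqq a_1|z_2|^2 - a_2|z_1|^2, \]
which holds because $\Sigma_{\mathbf{a},r}$ is the cone $|z_1|^2/a_1 = |z_2|^2/a_2$, so that $T\Sigma_{\mathbf{a},r}=\ker dF$. Next I would consider the vector field
\[ X \coloneqq a_2 \partial_{\theta_1} - a_1 \partial_{\theta_2} \]
generating the action in the statement. Since this action preserves each modulus $|z_i|$ separately, it preserves $F$, hence $X$ is tangent to $\Sigma_{\mathbf{a},r}$.

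The key computation is then to identify $\iota_X \omega_0$ with (half of) $dF$. A direct evaluation in real coordinates gives
\[ \iota_X \omega_0 = -\tfrac{a_2}{2}\, d|z_1|^2 + \tfrac{a_1}{2}\, d|z_2|^2 = \tfrac{1}{2}\,dF, \]
i.e.~$F/2$ is a moment map for the $X$-action with respect to $\omega_0$. Restricting to $T\Sigma_{\mathbf{a},r}\subset \ker dF$ gives $\iota_X \omega_0|_{T\Sigma_{\mathbf{a},r}} = 0$, so $X$ lies in the characteristic distribution. Because $X$ is nowhere zero on $\Sigma_{\mathbf{a},r}\subset(\C^*)^2$ and the distribution is one-dimensional, $X$ in fact spans it, proving the first assertion.

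For the second assertion I would simply compute
\[ df_{\mathbf{a}}(X) = a_1 z_1^{a_1-1}z_2^{a_2}(ia_2 z_1) + a_2 z_1^{a_1}z_2^{a_2-1}(-ia_1 z_2) = 0, \]
so $X$ is tangent to the fibres of $f_{\mathbf{a}}$; combined with the fact that these fibres are one-dimensional this forces the characteristic distribution to coincide with the tangent lines to the $S^1$-fibres. There is no real obstacle here; the mildly delicate point, were one to worry about it, is just that the argument takes place on the smooth locus $\Sigma_{\mathbf{a},r}\subset (\C^*)^2$, which is exactly where the statement is being made (the boundary has already been excluded in the construction).
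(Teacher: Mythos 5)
Your proof is correct, and in fact the paper states Lemma \ref{lem:char} without any proof at all, treating it as an evident computation; your moment-map argument (showing $\iota_X\omega_0=\tfrac{1}{2}\,dF$ for the defining function $F=a_1|z_2|^2-a_2|z_1|^2$, then restricting to $\ker dF$ and using that the characteristic distribution of a hypersurface is one-dimensional) is exactly the natural verification the authors leave implicit, and it is consistent with the polar-coordinate formula $\omega_0=d\bigl(\tfrac{s_1^2}{2}d\theta_1+\tfrac{s_2^2}{2}d\theta_2\bigr)$ they use in Lemma \ref{lem:pullback}. The tangency to the fibres of $f_{\mathbf{a}}$ via $df_{\mathbf{a}}(X)=0$ is likewise correct, and your remark that everything takes place on $(\C^*)^2$, where $dF\neq 0$ and $X$ is nowhere vanishing, disposes of the only delicate point.
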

In particular, by dimensional reasons, any (two-dimensional) Lagrangian immersion that is contained inside $\Sigma_{\mathbf{a},r}$ projects to a smooth immersed curve inside $D_{\widetilde{r}}^2 \setminus \{0\}$ under $f_{\mathbf{a}}$ and vice versa: since the fibres of $f_{\mathbf{a}}$ are tangent to the characteristic distribution, the preimage of any curve inside the base $D_{\widetilde{r}}^2 \setminus \{0\}$ under the same map is a Lagrangian immersion   contained inside the hypersurface  \color{black}  $\Sigma_{\mathbf{a},r}.$

In the following we will mainly be interested in $\Sigma_{\mathbf{a},r}$ for $\mathbf{a}=(1,1)$; in this case we drop the subscript $\mathbf{a}$ from both the hypersurface as well as the bundle projection. To be able to study the symplectic action properties of such Lagrangians, the following simple computation will be crucial.
\begin{lem}
  \label{lem:pullback}
The pull back to $\Sigma_r$ of the standard symplectic form $\omega_0$ on $\C^2$ is given by
$$\omega_0|_{T\Sigma_{r}}=f^*\,d((s/2)\,d\theta),$$
where $d((s/2)\,d\theta)$ is a symplectic form on $D^2_{\widetilde{r}} \setminus \{0\}$ of total area $\pi\widetilde{r}$ defined using the polar coordinates $(s,\theta)$.
\end{lem}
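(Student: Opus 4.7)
The plan is a direct coordinate calculation using a global angular parameterization of $\Sigma_r = \Sigma_{(1,1),r}$. Specifically, I would parameterize
\[(s,\theta_1,\theta_2) \longmapsto (s e^{i\theta_1}, s e^{i\theta_2}),\qquad s \in (0, r/\sqrt{2}],\ (\theta_1,\theta_2) \in \R^2/(2\pi\Z)^2,\]
under which the Lefschetz fibration becomes $f(s,\theta_1,\theta_2) = s^2 e^{i(\theta_1+\theta_2)}$. Denoting by $(s',\theta')$ the polar coordinates on the base $D^2_{\widetilde r}\setminus\{0\}$, where $\widetilde r = r^2/2$, this gives $f^*s' = s^2$ and $f^*\theta' = \theta_1 + \theta_2$.

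The core of the proof is then a three-line computation. Expressing $\omega_0 = r_1\,dr_1\wedge d\phi_1 + r_2\,dr_2\wedge d\phi_2$ in polar coordinates on each factor of $\C^2$ and substituting $r_j = s$, $\phi_j = \theta_j$ yields
\[\omega_0|_{T\Sigma_r} \;=\; s\,ds\wedge(d\theta_1+d\theta_2).\]
On the other hand, the base form $d((s'/2)\,d\theta') = \tfrac{1}{2}\,ds'\wedge d\theta'$ pulls back under $f$ to $\tfrac{1}{2}\,d(s^2)\wedge d(\theta_1+\theta_2) = s\,ds\wedge(d\theta_1+d\theta_2)$, which agrees with the previous display. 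The total area claim follows from the direct integral
\[\int_0^{2\pi}\!\!\int_0^{\widetilde r} \tfrac{1}{2}\,ds'\,d\theta' \;=\; \pi\widetilde r.\]

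There is no substantive obstacle; the only points to track carefully are the factor of two coming from $f^*s' = s^2$, and the two distinct uses of the symbol $s$ (the radial parameter of $\Sigma_r$ versus the polar coordinate on the base disc). As a consistency check, the explicit formula $\omega_0|_{T\Sigma_r} = s\,ds\wedge(d\theta_1+d\theta_2)$ also recovers Lemma~\ref{lem:char}: its kernel inside $T\Sigma_r$ is visibly spanned by $\partial_{\theta_1}-\partial_{\theta_2}$, which is precisely the infinitesimal generator of the action $(z_1,z_2)\mapsto (e^{it}z_1, e^{-it}z_2)$ appearing there in the case $\mathbf{a}=(1,1)$.
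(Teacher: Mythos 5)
Your proof is correct and follows essentially the same route as the paper's: a direct computation in polar coordinates using the expression of $f$ as $(s_1,\theta_1,s_2,\theta_2)\mapsto(s_1s_2,\theta_1+\theta_2)$, with the substitution $s_1=s_2=s$ on $\Sigma_r$. The only cosmetic difference is that the paper verifies the identity at the level of the primitives $\tfrac{s_1^2}{2}d\theta_1+\tfrac{s_2^2}{2}d\theta_2$ and $(s/2)\,d\theta$ and then applies $d$, whereas you compare the two-forms directly.
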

\begin{proof}
The standard symplectic form on $\C^2$ can be expressed as
$$\omega_0=d\left(\frac{s_1^2}{2}d\theta_1+\frac{s_2^2}{2}d\theta_2\right)$$
using the polar coordinates $(s_i,\theta_i)$ on each $\C$-factor. The map $f$ is given by
$$(s_1,\theta_1,s_2,\theta_2) \mapsto (s_1s_2,\theta_1+\theta_2)$$
in the same coordinates. Hence, the form $(s/2)\,d\theta$ pulls back to $(s_1s_2/2)(d\theta_1+d\theta_2)$ under $f$. Since $s_1=s_2$ holds on $\Sigma_{r}$ we obtain the equality
$$ \left.\left(\frac{s_1^2}{2}d\theta_1+\frac{s_2^2}{2}d\theta_2\right)\right|_{T\Sigma_{r}}=(s_1s_2/2)(d\theta_1+d\theta_2)|_{T\Sigma_{r}}$$
as sought.
\end{proof}

\subsection{A symplectic fibration near the divisor (Approach ($\beta$))}
\label{sec:beta}
Recall that $\Sigma_{\mathbf{a},r}^{\mathbf{n}}$ is obtained from the hypersurface $\Sigma_{\mathbf{a},r}$ via a quotient of its boundary, where this boundary is a Lagrangian torus. The image of the torus $\partial \Sigma_{\mathbf{a},r} \to \Sigma_{\mathbf{a},r}^{\mathbf{n}}$ under the quotient is an embedded closed curve $S_{\mathbf{a},r}^{\mathbf{n}} \subset \Sigma_{\mathbf{a},r}^{\mathbf{n}}.$ It has a neighbourhood given as a disc normal-bundle $D_\epsilon^2 \times S_{\mathbf{a},r}^{\mathbf{n}}$ with the following explicit description. Choose $\mathbf{c}=(c_1,c_2) \in \Z^2$ for which $n_1c_2-n_2c_1=1$ (here we use that $\mathbf{n}$ is primitive).
Consider the induced family
$$\left\{(s\sqrt{a_1/(a_1+a_2)}\cdot e^{i(n_1\varphi+c_1\theta)},s\sqrt{a_2/(a_1+a_2)}\cdot e^{i(n_2\varphi+c_2\theta)}); \:\: r-\epsilon \le s \le r \right\} \subset \Sigma_{\mathbf{a},r}$$
of annuli; here each annulus is parametrised by $(s,\varphi)$  while the family is parametrised by $\theta \in [0,2\pi)$. Since $n_1c_2-n_2c_1=1$ it follows that $\theta$ descends to a coordinate on $S_{\mathbf{a},r}^{\mathbf{n}}\cong S^1$ under the quotient.

The boundaries $\{s=r/\|\mathbf{a}\|\}$ of the annuli provide a smooth \blk  foliation of the torus $\partial\Sigma_{\mathbf{a},r}$ by the closed curves $\{\theta=c\}$. Since each leaf of the foliation is collapsed to a point under the quotient $\Sigma_{\mathbf{a},r} \to \Sigma_{\mathbf{a},r}^{\mathbf{n}}$ (which is given by the group action $(z_1,z_2) \mapsto (e^{in_1t}z_1,e^{in_2t}z_1)$); hence, the above family of annuli naturally gives rise to a family $D_\epsilon^2 \times S_{\mathbf{a},r}^{\mathbf{n}} \hookrightarrow \Sigma_{\mathbf{a},r}^{\mathbf{n}}$ of discs under the same quotient.

\begin{lem}
\label{lem:sympdiscs}
The discs $D_{\epsilon}^2 \times \{\theta\} \hookrightarrow \Sigma_{\mathbf{a},r}^{\mathbf{n}}$ induced by the above annuli are all symplectic, or equivalently, the characteristic distribution of $\Sigma_{\mathbf{a},r}^{\mathbf{n}}$ is transverse to each such disc. Moreover, the characteristic distribution is tangent to the curve $S_{\mathbf{a},r}^{\mathbf{n}} \subset \Sigma_{\mathbf{a},r}^{\mathbf{n}}$.
\end{lem}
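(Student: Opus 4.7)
The plan is to use the general fact that a codimension-one submanifold $L$ of a hypersurface $\Sigma$ in a symplectic four-manifold is symplectic precisely when it is transverse to the characteristic distribution $\ker(\omega|_{T\Sigma})$; this follows from a direct linear algebra computation inside $T\Sigma$. In view of this, both assertions of the lemma reduce to understanding the characteristic distribution of $\Sigma^{\mathbf{n}}_{\mathbf{a},r}$ in the explicit coordinates $(s,\varphi,\theta)$ provided by the parametrisation in the statement.

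First I would compute the pullback of $\omega_0 = d\bigl(\tfrac{1}{2}(|z_1|^2\,d\arg z_1 + |z_2|^2\,d\arg z_2)\bigr)$ to $\Sigma_{\mathbf{a},r}$. Using $|z_j|^2 = s^2 a_j/(a_1+a_2)$ and $\arg z_j = n_j\varphi + c_j\theta$, a straightforward computation yields
\begin{equation*}
\omega_0|_{T\Sigma_{\mathbf{a},r}} = \frac{s(a_1 n_1 + a_2 n_2)}{a_1+a_2}\, ds \wedge d\varphi + \frac{s(a_1 c_1 + a_2 c_2)}{a_1+a_2}\, ds \wedge d\theta.
\end{equation*}
By the standing assumption $a_1 n_1 + a_2 n_2 \neq 0$, the characteristic distribution on the interior $\Sigma^o_{\mathbf{a},r}$ is the line field spanned by $(a_1 c_1 + a_2 c_2)\,\partial_\varphi - (a_1 n_1 + a_2 n_2)\,\partial_\theta$. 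This has a nonzero $\partial_\theta$-component, and since each disc $\{\theta = \mathrm{const}\}$ is spanned by $\partial_s$ and $\partial_\varphi$ in its interior, transversality away from the collapsed centers is immediate.

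To handle the collapsed center of each disc I would note that the quotient $\partial\Sigma_{\mathbf{a},r} \to S^{\mathbf{n}}_{\mathbf{a},r}$ collapses precisely the orbits of the $S^1$-action generated by $(n_1,n_2)$, i.e.\ the $\partial_\varphi$-direction. Consequently, the characteristic distribution on $\Sigma^{\mathbf{n}}_{\mathbf{a},r}$ (which is automatically smooth since both the hypersurface and the reduced symplectic form are smooth) must restrict along $S^{\mathbf{n}}_{\mathbf{a},r}$ to the projection of the interior formula, namely $-(a_1 n_1 + a_2 n_2)\,\partial_\theta$. This vector is nonzero and tangent to the curve $S^{\mathbf{n}}_{\mathbf{a},r}$, which gives the second assertion. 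The first assertion at the center of each disc follows as well: the tangent space to the disc has $\theta$ constant while the characteristic now points in the $\partial_\theta$-direction, yielding transversality.

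The main subtlety will be rigorously justifying the smooth extension of the characteristic line field across $S^{\mathbf{n}}_{\mathbf{a},r}$ together with its claimed limiting value. This can be handled either by direct appeal to the smoothness of the reduced symplectic form and of $\Sigma^{\mathbf{n}}_{\mathbf{a},r}$ as a hypersurface in the 4-dimensional quotient of $H^{\mathbf{n}}_{\mathbf{a},r}$, or by introducing explicit Darboux-type coordinates transverse to the reduced divisor $(\partial H^{\mathbf{n}}_{\mathbf{a},r})/S^1$ and computing the kernel of $\omega$ directly in those coordinates.
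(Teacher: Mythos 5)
Your proof is correct and follows essentially the same route as the paper's: both arguments come down to identifying the characteristic distribution of $\Sigma_{\mathbf{a},r}$ in the coordinates $(s,\varphi,\theta)$ and observing that, because $n_1a_1+n_2a_2\neq 0$, it is nowhere parallel to the collapsing direction $\partial_\varphi$, hence transverse to the annuli $\{\theta=\mathrm{const}\}$. Your explicit computation of $\omega_0|_{T\Sigma_{\mathbf{a},r}}$ and of its kernel reproduces what the paper obtains by citing Lemma \ref{lem:char}, and is consistent with Lemma \ref{lem:parallel}. The only genuine difference is at the core: you deduce tangency of the characteristic line field to $S_{\mathbf{a},r}^{\mathbf{n}}$ by a continuity/limiting argument, which requires the smooth-extension justification you yourself flag as the main subtlety, whereas the paper sidesteps this entirely by noting that the characteristic vector field upstairs is already tangent to the boundary torus $\partial\Sigma_{\mathbf{a},r}$ (it generates a circle subgroup of the torus action preserving that torus), so its image under the quotient is automatically tangent to the core curve, and transversality to the discs at their centers follows at once. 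Your version is workable, but the paper's observation disposes of the delicate point in one line.
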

\begin{proof}
Since $n_1a_1+n_2a_2 \neq 0$ holds by assumption, it follows from Lemma \ref{lem:char} that the characteristic distribution $\frac{d}{dt}(e^{ia_2t},e^{-ia_1t})$ on $\Sigma_{\mathbf{a},r}^o$ is nowhere parallel to $\frac{d}{dt}(e^{in_1t},e^{in_2t})$ and, consequently, everywhere transverse to the annuli $\{\theta=c\}$ inside
$$\Sigma_{\mathbf{a},r}^o = \Sigma_{\mathbf{a},r}^{\mathbf{n}} \setminus S_{\mathbf{a},r}^{\mathbf{n}} \subset \Sigma_{\mathbf{a},r}^{\mathbf{n}}.$$
In other words, the discs $D_{\epsilon}^2 \times \{\theta\}$ are all symplectic away from the origin (in this region they coincide with the aforementioned annuli).

Since the characteristic distribution on $\Sigma_{\mathbf{a},r}$ moreover is tangent to its boundary, it follows that the characteristic distribution on $\Sigma_{\mathbf{a},r}^{\mathbf{n}}$ is tangent to the core $S_{\mathbf{a},r}^{\mathbf{n}}$ (i.e.~the image of the boundary under the quotient) as sought.  This implies, in particular, that the discs $D_{\epsilon}^2 \times \{\theta\}$ are symplectic at the origin as well.  
\end{proof}
In view of the above lemma, the product structure induced by the identification of the neighbourhood of $S_{\mathbf{a},r}^{\mathbf{n}} \subset \Sigma_{\mathbf{a},r}^{\mathbf{n}}$ with an embedded solid torus $D_\epsilon^2 \times S_{\mathbf{a},r}^{\mathbf{n}}$ gives us a symplectic fibration
$$ g_{\mathbf{a}}^{\mathbf{n},\mathbf{c}} \colon \operatorname{Op}_{ \Sigma_{\mathbf{a},r}^{\mathbf{n}}}( S_{\mathbf{a},r}^{\mathbf{n}}) \to S_{\mathbf{a},r}^{\mathbf{n}} \cong S^1 $$
defined in a neighbourhood $\operatorname{Op}_{ \Sigma_{\mathbf{a},r}^{\mathbf{n}}}( S_{\mathbf{a},r}^{\mathbf{n}})$ of the curve $S_{\mathbf{a},r}^{\mathbf{n}} \subset \Sigma_{\mathbf{a},r}^{\mathbf{n}}.$ \blk The characteristic distribution gives rise to a well-defined parallel transport
$$ D_\epsilon^2 \times \{\theta_0\} \to D_\epsilon^2 \times \{\theta_1\} $$
induced by the path $\theta=\theta_0+t(\theta_1-\theta_0)$ in $S_{\mathbf{a},r}^{\mathbf{n}}$ which starts at $\theta=\theta_0$ and ends at $\theta=\theta_1$. This is a symplectomorphism which, since the base is one-dimensional, moreover is independent of the choice of path up to homotopy with fixed endpoints.

As mentioned in Lemma \ref{lem:sympdiscs} above, the curve $S_{\mathbf{a},r}^\mathbf{n} \subset \Sigma_{\mathbf{a},r}^\mathbf{n}$ is tangent to the characteristic distribution. In other words, the origin of the disc $D_\epsilon^2$ is preserved by the above parallel transport. More precisely, the parallel transport takes the form of the following explicit rotation:
 \color{black}
\begin{lem}
\label{lem:parallel}
Relative the polar coordinates $(s,\varphi)$ on the discs $D_\epsilon^2 \times \{\theta\}$, the aforementioned parallel transport takes the form
$$(s,\varphi) \mapsto \left(s,\varphi - \frac{c_1a_1+c_2a_2}{n_1a_1+n_2a_2}(\theta_1-\theta_0)\right),$$
where $n_1a_1+n_2a_2$ is the intersection number between the two curves $t\cdot(a_2,-a_1)$ (i.e.~an integral curve of the characteristic distribution) and $t\cdot\mathbf{n}$ on the torus $\R^2/\Z^2.$
\end{lem}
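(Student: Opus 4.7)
The plan is to reduce the lemma to a linear-algebra calculation comparing the $(\psi_1,\psi_2)$-angular coordinates of $(z_1,z_2)$ with the chart coordinates $(\varphi,\theta)$ on the annulus family. By Lemma \ref{lem:char}, the characteristic distribution on $\Sigma_{\mathbf{a},r}^o$ is spanned by the vector field $a_2\partial_{\psi_1}-a_1\partial_{\psi_2}$. On the parametrised annuli one reads off $\psi_1=n_1\varphi+c_1\theta$ and $\psi_2=n_2\varphi+c_2\theta$, so the change-of-basis matrix is $\begin{pmatrix}n_1 & c_1\\ n_2 & c_2\end{pmatrix}$; by the defining choice of $\mathbf{c}$ this has determinant $n_1c_2-n_2c_1=1$, and hence integer inverse $\begin{pmatrix}c_2 & -c_1\\ -n_2 & n_1\end{pmatrix}$. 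Consequently $\varphi=c_2\psi_1-c_1\psi_2$ and $\theta=-n_2\psi_1+n_1\psi_2$.

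Flowing along the characteristic vector field for time $t$ sends $\psi_1\mapsto\psi_1+a_2t$ and $\psi_2\mapsto\psi_2-a_1t$, and applying the inverse change of basis yields
\[
\Delta\varphi = (c_1a_1+c_2a_2)\,t, \qquad \Delta\theta = -(n_1a_1+n_2a_2)\,t.
\]
In order to transport from the fibre over $\theta_0$ to the one over $\theta_1$ we solve $\Delta\theta=\theta_1-\theta_0$, giving $t=-(\theta_1-\theta_0)/(n_1a_1+n_2a_2)$; this is well defined precisely because the standing hypothesis $n_1a_1+n_2a_2\neq 0$ is in force. Substituting back into the expression for $\Delta\varphi$ produces the shift claimed by the lemma. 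The radial coordinate $s$ is fixed by the flow because $a_2\partial_{\psi_1}-a_1\partial_{\psi_2}$ preserves each individual modulus $|z_i|$, hence also preserves the defining parameter $s$ of the annulus family.

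Finally, the parenthetical identification of $n_1a_1+n_2a_2$ as an intersection number on $\R^2/\Z^2$ is the standard formula: the algebraic intersection number of the two linear circles with primitive integer tangent vectors $(a_2,-a_1)$ and $(n_1,n_2)$ is the absolute value of $\det\begin{pmatrix}a_2 & n_1\\ -a_1 & n_2\end{pmatrix}=a_1n_1+a_2n_2$. The only genuine subtlety in the whole argument is bookkeeping around the quotient that produces $\Sigma^{\mathbf{n}}_{\mathbf{a},r}$: one must verify that the above parametrisation genuinely descends to polar coordinates on the symplectic disc $D^2_\epsilon\times\{\theta\}$ near the collapsed core (after the affine reparametrisation sending $s=r$ to the origin of the disc). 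This, however, is exactly what is built into the construction, since the collapsed direction is the $S^1$-orbit that rotates $\varphi$.
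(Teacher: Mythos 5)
Your proposal is correct and follows essentially the same route as the paper: both express the characteristic vector field $a_2\partial_{\psi_1}-a_1\partial_{\psi_2}$ from Lemma \ref{lem:char} in the $(\varphi,\theta)$-frame via the inverse of the unimodular matrix $\begin{pmatrix}n_1 & c_1\\ n_2 & c_2\end{pmatrix}$, obtaining the direction $(c_1a_1+c_2a_2)\partial_\varphi-(n_1a_1+n_2a_2)\partial_\theta$, and then normalise the $\theta$-component to read off the $\varphi$-shift (the paper phrases this as choosing a horizontal vector field, you phrase it as solving for the flow time; these are identical computations). The only cosmetic quibble is that the signed intersection number is the determinant itself rather than its absolute value, which is immaterial here since $\mathbf{a},\mathbf{n}\in\Z_{>0}^2$.
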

\begin{proof}
Consider the frame $\langle\partial_t,\partial_\varphi,\partial_\theta\rangle$ of the tangent bundle of the hypersurface $\Sigma_{\mathbf{a},r}$ induced by the above coordinates. In this frame the characteristic distribution (recall its expression from Lemma \ref{lem:char}) is given by
$$ \R\cdot((c_1a_1+c_2a_2)\partial_\varphi+(-n_1a_1-n_2a_2)\partial_\theta).$$
It follows that
$$-\frac{c_1a_1+c_2a_2}{n_1a_1+n_2a_2}\partial_\varphi+\partial_\theta$$
is a horizontal vector field for the parallel transport in question.
\end{proof}
Any Lagrangian submanifold contained inside $\Sigma_{\mathbf{a},r}^{\mathbf{n}}$ must be tangent to the characteristic distribution. The above lemma thus in particular implies that
\begin{cor}
\label{cor:symmetry} Any properly immersed Lagrangian
without boundary which is contained inside the solid torus
$D_\epsilon^2 \times S_{\mathbf{a},r}^{\mathbf{n}}$ is determined
uniquely by its intersection with the symplectic disc $D_\epsilon^2
\times \{\theta\}$; the latter is a curve which is invariant under
the monodromy map
$$(s,\varphi) \mapsto \left(s,\varphi - \frac{c_1a_1+c_2a_2}{n_1a_1+n_2a_2}2\pi\right).$$
The intersection of the Lagrangian with the remaining discs are then given as the images of this curve under the parallel transport map.

Conversely, given any curve inside $D_\epsilon^2 \times \{\theta\}$ which is invariant under the monodromy, the extension to a parallel manifold is a Lagrangian submanifold contained inside $D_\epsilon^2 \times S_{\mathbf{a},r}^{\mathbf{n}}$.
\end{cor}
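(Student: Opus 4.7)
The plan is to deduce the statement by combining three ingredients already in place: (i) a dimension count that forces a Lagrangian in a codimension-one coisotropic to contain the characteristic line at each of its points, (ii) the fibration structure of the solid torus by the symplectic discs of Lemma \ref{lem:sympdiscs}, and (iii) the explicit monodromy formula of Lemma \ref{lem:parallel}.

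First I would verify the elementary fact that if $L \subset \Sigma_{\mathbf{a},r}^{\mathbf{n}}$ is Lagrangian, then at every point $p \in L$ one has $\ker(\omega|_{T_p\Sigma_{\mathbf{a},r}^{\mathbf{n}}}) \subset T_pL$. Indeed, for any characteristic vector $v$ the identity $\omega(v,\cdot)|_{T_p\Sigma} = 0$ shows that $v \in (T_pL)^{\omega} = T_pL$, the last equality by the Lagrangian condition. Hence $L$ is a union of integral leaves of the characteristic distribution. Combined with the transversality of these leaves to the fibres of $g_{\mathbf{a}}^{\mathbf{n},\mathbf{c}}$ from Lemma \ref{lem:sympdiscs}, this implies that inside the solid torus $D_\epsilon^2 \times S_{\mathbf{a},r}^{\mathbf{n}}$ the Lagrangian is recovered from its slice $\gamma := L \cap (D_\epsilon^2 \times \{\theta_0\})$ by sweeping $\gamma$ under parallel transport around $S^1$. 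Going once around, i.e.\ taking $\theta_1-\theta_0 = 2\pi$ in Lemma \ref{lem:parallel}, returns to the original fibre via the rotation
$$ (s,\varphi) \mapsto \left(s,\varphi - \frac{c_1a_1+c_2a_2}{n_1a_1+n_2a_2}2\pi\right).$$
Since $L$ has no boundary and is properly immersed, its trace must close up on itself in the original fibre, which is precisely the required invariance of $\gamma$ under this rotation.

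For the converse I would, given any such monodromy-invariant curve $\gamma \subset D_\epsilon^2 \times \{\theta_0\}$, define $L$ as the union of the characteristic leaves through $\gamma$. The invariance of $\gamma$ ensures that the leaves close up after one full turn of $\theta$, producing a properly immersed surface inside the solid torus without boundary. To verify that this surface is Lagrangian, observe that at each point $TL$ is spanned by the characteristic direction (which lies in $\ker\omega$) together with a direction tangent to the symplectic disc $D_\epsilon^2 \times \{\theta\}$; since any one-dimensional subspace of a symplectic plane is automatically isotropic, $\omega$ vanishes identically on $TL$. Being of half the ambient dimension, $L$ is then Lagrangian.

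The main technical point to settle cleanly is the behaviour along the core curve $S_{\mathbf{a},r}^{\mathbf{n}}$, where the fibration $g_{\mathbf{a}}^{\mathbf{n},\mathbf{c}}$ degenerates and the characteristic distribution is tangent to the fibre $S_{\mathbf{a},r}^{\mathbf{n}}$ itself rather than transverse to it. Away from this locus the parallel transport is a genuine symplectomorphism between smooth symplectic discs, so the argument above applies verbatim; at the origin one invokes the second half of Lemma \ref{lem:sympdiscs} to conclude that if $\gamma$ passes through $0 \in D_\epsilon^2$ then its parallel extension sweeps out exactly the closed curve $S_{\mathbf{a},r}^{\mathbf{n}}$, which is automatically isotropic, so no new issue arises there.
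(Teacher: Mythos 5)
Your proposal is correct and follows exactly the route the paper intends: the corollary is stated there without a separate proof, as an immediate consequence of the tangency of any Lagrangian in the hypersurface to the characteristic distribution together with Lemma \ref{lem:sympdiscs} and the explicit parallel transport of Lemma \ref{lem:parallel}. Your write-up simply fills in the routine details of that same argument (including the correct handling of the core curve $S_{\mathbf{a},r}^{\mathbf{n}}$), so there is nothing to add.
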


\subsection{ The particular case $\mathbf{a}=\mathbf{n}=(1,1)$, $\mathbf{c}=(0,1)$.}
\label{sec:maincase}

For our purposes we will use the hypersurface $\Sigma_{(1,1),1}^{(1,1)} \subset \CP^2$ depicted in Figure \ref{fig:moment_polytope} for constructing our Lagrangian isotopies. We give a brief recollection of the above constructions in this particular case.

A regular homotopy of Lagrangians that are contained inside this hypersurface, and which are disjoint from the divisor $\ell_\infty$ at infinity, can be efficiently described by its projection under the symplectic reduction
$$f \colon \Sigma_{(1,1),1}^{(1,1)} \setminus \ell_\infty = \Sigma_{(1,1),1}^o \to \dot{D}^2_{1/2}.$$
Any such Lagrangian projects to an immersed curve, and any regular homotopy through such Lagrangians corresponds to a regular homotopy of the projection. Recall that the Bohr--Sommerfeld condition will be satisfied whenever the symplectic action of the Lagrangian immersion takes values in $\pi\cdot\Z$. This condition then translates to the fact that the symplectic action for the projection takes values in $\pi\cdot\Z$ with respect to the exact symplectic form from Lemma \ref{lem:pullback}.

In order to describe a Lagrangian regular homotopy that traverses the line at infinity $\ell_\infty \cap \Sigma_{(1,1),1}^{(1,1)}$ the projection $f$ cannot be used; here we instead use the symplectic fibration
$$g \colon \Sigma_{(1,1),1}^{(1,1)} \to S^1$$
induced by $\mathbf{c}=(0,1)$. In fact, we will only use this symplectic fibration in a small neighbourhood of $\ell_\infty$. Note that the induced monodromy map is given by $(s,\varphi) \mapsto (s,\varphi+\pi),$ i.e.~it is a rotation by $\pi$ radians.

\begin{figure}[h!]
\vspace{5mm}
\centering
\labellist
\pinlabel $\pi$ at 220 15
\pinlabel $x$ at 280 30
\pinlabel $\mathbf{n}=(1,1)$ at 205 140
\pinlabel $\mathbf{c}=(0,1)$ at 150 173
\pinlabel $\frac{\pi}{3}$ at 95 11
\pinlabel $\frac{\pi}{3}$ at 15 98
\pinlabel $y$ at 34 282
\pinlabel $\color{blue}L_{\OP{Cl}}$ at 118 78
\pinlabel $\pi$ at 15 224
\endlabellist
\includegraphics[scale=0.5]{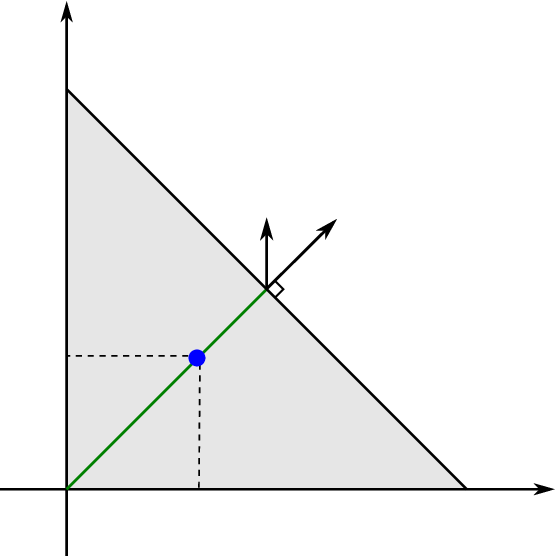}
\caption{The standard momentum polytope for $\CP^2,$ the fibre over the point $(\pi/3,\pi/3)$ (shown in blue) is the Clifford torus while the subset over the line $\{u=v\}$ (shown in green) is symplectomorphic to $\Sigma_{(1,1),1}^{\mathbf{n}}$ with $\mathbf{n}=(1,1)$.}
\label{fig:moment_polytope}
\end{figure}

\subsection{Higher dimensions $n>2$}

In the case $n > 2,$ the subset $\overline{\Sigma} \subset \CP^n$ that we are interested in is the quotient of the real $(n+1)$-dimensional submanifold
$$ \Sigma \coloneqq \{ z(x_1,\ldots,x_n); \: z \in S^1 \subset \C^*, x_i \in \R \} \cap D^{2n}$$
 under the symplectic reduction $D^{2n} \to \CP^n$   along the boundary $S^{2n-1}=\partial D^{2n}.$
In higher dimensions $\Sigma$ is of course not a hypersurface, but we can nevertheless still use the same approach. Again the characteristic distribution are the tangencies to the $S^{n-1}$-fibres of the symplectic Lefschetz fibration $z_1^2+\ldots+z_n^2$ restricted to $\Sigma.$ Observe that this description also makes sense in the case $n=2$; in fact, this hypersurface is equivalent to $\Sigma_{(1,1),1}^{(1,1)}$ after a suitable coordinate change.

\subsection{Loose charts and the Lefschetz fibration}
We now describe a loose chart that is well-behaved with respect to the restricted Lefschetz fibration $f \colon \Sigma \to \dot{D}^2_{1/2}$. 
Consider a Legendrian inside $(S^{2n+1},\alpha_{\OP{st}})$ whose Lagrangian projection in $\CP^n$ has the following description. Inside some $f^{-1}(U) \subset \Sigma$ for an open   simply connected    domain $U \subset \dot{D}^2_{1/2}$ with smooth boundary, the projection of $\Lambda \cap f^{-1}(U)$ under $  f \circ \pi$ is a single arc $\gamma$ that intersects $\partial U$ transversely in two points, where this arc moreover has a single transverse  double point; see Figure \ref{fig:loose}. Let $A \subset U \setminus \gamma$ be the connected component which is bounded in $\C \setminus \gamma$ while $B \subset U \setminus \gamma$ is the connected component which is adjacent to $A$ only at the double-point of $\gamma$; these different components are depicted in Figure~\ref{fig:loose}.
\begin{lem}
\label{lem:loose}
If the $(1/2)dr\wedge d\theta$-area of $A$ is strictly smaller than that of $B,$ then we can assume that $A$ is arbitrarily small after a Legendrian isotopy whose support is compact and contained inside
$$\pi^{-1}(f^{-1}(U)) \subset \pi^{-1}(\Sigma) \subset S^{2n+1}$$
Additionally, $\Lambda$ is loose under the same assumptions.
\end{lem}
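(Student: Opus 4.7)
Both claims of the lemma rest on reducing to a two-dimensional picture via the Lefschetz fibration $f$. Because the part of $\Lambda$ lying over $U$ is fully determined, up to the Reeb $S^1$-action and the $SO(n)$-symmetry of the $S^{n-1}$-fibres of $f|_\Sigma$, by the arc $\gamma$ together with a $z$-height function, any sufficiently tame isotopy of $\gamma$ inside $U$ lifts to a compactly supported Legendrian isotopy of $\Lambda$ inside $\pi^{-1}(f^{-1}(U))$. The plan is therefore to first realize the shrinking of $A$ as a planar isotopy of $\gamma$, and then to use the resulting standard local model to recognize a Murphy loose chart.

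\textbf{Shrinking $A$.} I would construct a compactly supported Hamiltonian isotopy of $(U,\tfrac{1}{2}\,dr\wedge d\theta)$, fixed near $\partial U$, that pushes the arc bounding $A$ across the transverse double point, transferring the area of $A$ into the adjacent region $B$, while keeping $\gamma$ immersed with precisely one transverse self-intersection. This is a two-dimensional move of Reidemeister type: since $U$ is simply connected, the double point persists throughout, and the area swept out of $A$ can be absorbed provided $B$ has strictly larger $\tfrac{1}{2}\,dr\wedge d\theta$-area than $A$. After composing with the $z$-height adjustment required to make the Reeb chord above the double point behave monotonically, the family lifts to the desired Legendrian isotopy of $\Lambda$, supported inside $\pi^{-1}(f^{-1}(U))$, at the end of which the teardrop $A$ has arbitrarily small area.

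\textbf{Looseness.} Once $A$ is sufficiently small, I would exhibit a Darboux ball centered at the double point of $\gamma$ in which $\Lambda$ restricts to a standard stabilisation. In this Darboux ball, the arc $\gamma$ together with its $z$-lift supplies a planar zig-zag (two strands meeting at a transverse Reeb chord of very small length, corresponding to the shrunken area of $A$) in two of the coordinate directions, while the $S^{n-1}\times S^1$ factor coming from the Lefschetz fibres and the Reeb circle contributes a standard Legendrian disc factor in the remaining directions. Matching this with Murphy's normal form for a loose chart yields looseness; the point is that the ambient dimension $2n+1\geq 5$ provides room for the disc factor, and that the width of that factor is controlled by the thickness of $\pi^{-1}(f^{-1}(U))$ in the fibre directions, which is fixed independently of how small $A$ becomes.

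\textbf{Main obstacle.} The main technical issue is the bookkeeping for the lift from the planar isotopy of $\gamma$ to a genuine Legendrian isotopy of $\Lambda$: one must verify that throughout the shrinking the only self-intersection of the Lagrangian projection in $\CP^n$ remains a single transverse double point, with no spurious double points produced by the collapsing $S^{n-1}$-fibres over interior critical values of $f|_\Sigma$, and that the resulting loose chart is indeed contactomorphic to Murphy's standard model with the correct relative scales. The hypothesis that the area of $A$ is strictly smaller than that of $B$ is precisely what ensures that this entire picture can be kept inside $U$ without the curve being forced to acquire a tangency, an extra crossing, or to exit the given neighbourhood.
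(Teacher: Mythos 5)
Your overall route coincides with the paper's: deform $\gamma$ inside $U$, rel $\partial U$, so that the teardrop $A$ shrinks while the total signed area of the arc is unchanged (this is exactly where the hypothesis $\OP{area}(A)<\OP{area}(B)$ enters), then recognise the resulting configuration $0_{S^{n-1}}\times\Lambda_{\OP{stab}}$ over a tiny disc around the double point as a loose chart, using that the width of the transverse $D_\epsilon T^*S^{n-1}$-factor is independent of how small $A$ has become. The looseness step and the role you assign to the area hypothesis both match the paper.

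However, the mechanism you propose for the shrinking step fails as literally stated. A compactly supported \emph{ambient} Hamiltonian isotopy of $(U,\tfrac{1}{2}dr\wedge d\theta)$ is area-preserving, so it carries $A$ to a region of the \emph{same} area at every time; it can neither shrink $A$ nor ``push the arc across the double point'' (the double point simply travels with the ambient flow), and no area is ever swept out of $A$ by such an isotopy. What is needed, and what the paper uses, is a compactly supported \emph{regular homotopy of the immersed curve} $\gamma_t$ itself --- equivalently an exact (Hamiltonian) deformation of the Lagrangian \emph{immersion} $f^{-1}(\gamma)$, not of the ambient surface --- fixed near $\partial U$, keeping a single transverse double point, shrinking the loop bounding $A$ into a ball $B^2_{1/t}(p)$, and arranged so that the \emph{net} signed area swept by the whole arc is zero; the compensating sweep takes place in $B$, which is possible precisely because $\OP{area}(B)>\OP{area}(A)$. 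Your later sentences (``transferring the area of $A$ into $B$'', ``the area swept out of $A$ can be absorbed'') show you intend this non-ambient picture, but the proof must be phrased that way from the start. Two smaller corrections: no ``$z$-height adjustment'' is needed, since the Legendrian lift of $f^{-1}(\gamma_t)$ is canonical up to an overall Reeb translation and stays embedded because the teardrop area remains in $(0,\pi)$ throughout (it never reaches $0$); and $f|_\Sigma$ has no critical values over $U\subset\dot{D}^2_{1/2}$, so the ``collapsing fibres'' you worry about do not occur --- the only embeddedness check is the action difference at the double point.
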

\begin{proof}
The compactly supported Legendrian isotopy is explicitly constructed as the lifts of $f^{-1}(\gamma_t)$ for a suitable regular isotopy $\gamma_t$ of the curve $\gamma_0=\gamma$   which remains fixed near the boundary $\partial U$. We want the curves $\gamma_t$ to all be immersed with a single transverse double point, while the $(1/2)dr\wedge d\theta$-area satisfies the following properties:
\begin{itemize}
\item the region $A$ for the curve $\gamma_t$, for all $t>1$, is contained inside the ball $B^2_{1/t}(p) \subset \dot{D}^2_{1/2}$ of radius $1/t$ centered at some fixed point $p$ (i.e.~the region $A$ shrinks to a point under the deformation; in particular its area is shrinking indefinitely); and
\item two different curves in family $\gamma_t$ have signed area difference equal to zero (the area \emph{difference} does not depend on the choice of a primitive of the area form).
\end{itemize}
Note that the assumption that the area of $A$ is smaller than the area $B$ is crucial in order to make both bullet points satisfied simultaneously.
By Lemma \ref{lem:pullback} it follows that the Bohr--Sommerfeld condition is preserved, and that the corresponding Legendrian lifts can be taken to be supported inside $\pi^{-1}(f^{-1}(U))$ as sought. In particular, the Legendrians stay embedded.

What remains is to show the looseness. One readily shows that there exists a neighbourhood of the form
$$(D_\epsilon T^*S^{n-1} \times [-\epsilon,\epsilon]^3,-pdq-ydx+dz) \hookrightarrow \pi^{-1}(f^{-1}(B^2_{1/T}(p)) \subset (S^{2n+1},\alpha_{\OP{st}})$$
for some $T \gg 0$ in which $f\circ\pi$ becomes identified with the Lagrangian projection of the factor $[-\epsilon,\epsilon]^3$, i.e.~the three-dimensional contact Darboux ball.

After the aforementioned isotopy, we may thus assume that the Legendrian coincides with $0_{S^n} \times \Lambda_{\OP{stab}},$ for a stabilised Legendrian arc $\Lambda_{\OP{stab}} \subset ([-\epsilon,\epsilon]^3,dz-ydx).$   To that end, recall that the Lagrangian projection of a standard stabilisation is precisely the loop shown in Figure \ref{fig:loose} with $A$ being of smaller area than $B$. 
\end{proof}

\begin{figure}[t]
  \centering
  \vspace{3mm}
\labellist
\pinlabel $x$ at 195 92
\pinlabel $y$ at 92 198
\pinlabel $U$ at 73 140
\pinlabel $\color{blue}\gamma$ at 82 108
\pinlabel $B$ at 65 112
\pinlabel $A$ at 44 127
\endlabellist
\includegraphics{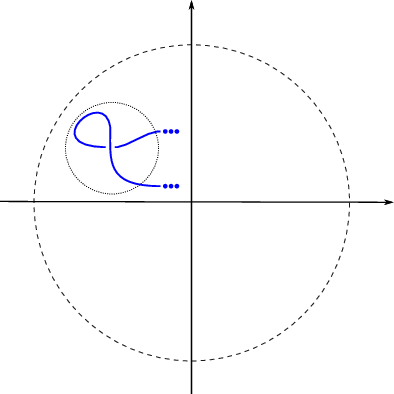}
\caption{A loose chart described by using the standard Lefschetz fibration, in the sense that $ f(\pi(\Lambda))=\gamma$ is an arc with a single self-intersection. The area of the region $A$ must be greater than the area of the region $B.$}
\label{fig:loose}
\end{figure}

\blk

\section{Canonical Bohr--Sommerfeld covers}
\label{sec:bscovers}

Here we present some fairly general considerations on how to obtain canonically defined Bohr--Sommerfeld covers from rational Lagrangian embeddings, and how to compute the Maslov classes of their lifts. Theorem \ref{thm:bscovers} then follows as a special case. Here we assume that $(E,\alpha)$ is a given prequantisation bundle over $(M,\omega)$ and the notion of Bohr--Sommerfeld Lagrangian will be taken with respect to this bundle.

In order to be able to reformulate the property of being Bohr--Sommerfeld in terms of integrality properties of the symplectic area of relative homology classes, we make the simplifying assumption that $\pi_1(M)=0$ throughout this section; c.f.~Remark \ref{rem:sympaction}.

\subsection{Bohr--Sommerfeld covers}

Any Lagrangian submanifold whose symplectic action class satisfies
\begin{equation}
  \label{eq:k}
[\omega] \in  \frac{\pi}{k}\cdot\im(H^2(M,L,\Z) \to H^2(M,L,\R))
\end{equation}
for some $k =1, 2,3,\ldots,$ admits a canonical $k$--fold cover which is a Bohr--Sommerfeld Lagrangian immersion determined in the following manner.

 \color{black} The symplectic action class $\sigma \colon H_2(M,L,\Z) \to \R$ descends to a well-defined morphism
$$ \overline{\sigma} \colon H_1(L,\Z) \to \left(\frac{\pi}{k}\cdot \Z\right)/(\pi\cdot \Z) \cong \Z_k,$$
which we assume is surjective.

Consider the subgroup $\ker \overline{\sigma} \subset H_1(L,\Z)$ and   the subgroup $G \subset \pi_1(L)$ given as the preimage of $\ker \overline{\sigma}$ under the abelinisation. (In the cases that we are interested in here we have $\pi_1(L)=H_1(L)$.) The corresponding $k$-fold cover $\widetilde{L} \to L \subset M$ is a canonically defined Lagrangian immersion $\widetilde{L} \looparrowright (M,\omega)$,  which is Bohr--Sommerfeld.  \color{black}

\begin{defn}
 Let $L \subset (M,\omega)$ be a Lagrangian embedding. For the minimal value $k = 1,2,3,\ldots$ with the property that Equation \eqref{eq:k} is satisfied, the induced Lagrangian immersion $\widetilde{L} \hookrightarrow (M,\omega)$ is called the {\em canonical $k$-fold Bohr--Sommerfeld cover of $L$.}
\end{defn}
From Lemma \ref{lem:prequant}, together with the main theorem of \cite{LiftingImmersions} (in order to ensure embeddedness), we deduce that:
\begin{prop}
\label{prop:lift}
Assume that $L$ is a closed embedded Lagrangian submanifold of $(M,\omega)$ that satisfies Equation \eqref{eq:k}. Its canonical $k$-fold Bohr--Sommerfeld cover $\widetilde{L} \looparrowright (M,\omega)$ then admits an embedded Legendrian lift $\Lambda_{\widetilde{L}} \hookrightarrow (E,\alpha)$  which is uniquely defined up to an application of the Reeb flow. \color{black}
\end{prop}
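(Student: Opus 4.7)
The plan is to combine the construction of the canonical cover with Lemma \ref{lem:prequant} and the criterion for embeddedness of the Legendrian lift stated in the paragraphs immediately preceding Remark \ref{rem:sympaction}. Since $\pi_1(M)=0$, I may freely use the translation of the Bohr--Sommerfeld condition into integrality of symplectic areas.

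First, I would verify that the $k$-fold cover $\widetilde{L} \looparrowright M$ is genuinely Bohr--Sommerfeld. By construction the cover is classified by the subgroup $\ker\overline{\sigma} \subset H_1(L,\Z)$, so any loop $\widetilde{\gamma}$ in $\widetilde{L}$ projects to a loop in $L$ whose class lies in $\ker\overline{\sigma}$. By the definition of $\overline{\sigma}$ together with hypothesis \eqref{eq:k}, this says that any disc $u \in H_2(M,\iota(L))$ capping $\iota(\widetilde{\gamma})$ satisfies $\int_u \omega \in \pi\cdot\Z$. Invoking Remark \ref{rem:sympaction}, this establishes the Bohr--Sommerfeld property. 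Lemma \ref{lem:prequant} then produces a Legendrian lift $\Lambda_{\widetilde{L}} \looparrowright (E,\alpha)$ which, because $\widetilde{L}$ is connected (which follows from connectedness of $L$ together with surjectivity of $\overline{\sigma}$), is uniquely defined up to a global application of the Reeb flow.

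Next, I would promote this lift to an embedding by verifying the criterion recorded between Lemma \ref{lem:prequant} and Remark \ref{rem:sympaction}: that $\alpha$ has non-trivial monodromy along $\iota(\gamma)$ for every path $\gamma$ in $\widetilde{L}$ connecting two preimages of a double point. Since $L$ is embedded in $M$, every self-intersection of $\widetilde{L}$ arises via the deck transformation group, which is the quotient $H_1(L,\Z)/\ker\overline{\sigma} \cong \Z_k$. If two distinct points $p_1, p_2 \in \widetilde{L}$ have the same image in $L$ and $\gamma$ connects them inside $\widetilde{L}$, then $\iota(\gamma)$ is a loop in $L$ whose class represents a non-trivial deck element, so $\overline{\sigma}([\iota(\gamma)])$ is a non-zero element of $\Z_k$. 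Since $\overline{\sigma}$ is precisely the symplectic area class reduced modulo $\pi$, and this is identified with the monodromy of the connection one-form $\alpha$ along $\iota(\gamma)$ via $d\alpha = \omega$, the monodromy is non-trivial, so the criterion is satisfied and the lift is embedded.

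The main obstacle will be the last step, where I must carefully identify the deck action on $\widetilde{L}$ with the mod-$\pi$ holonomy of $\alpha$ along the associated closed loops in $L$. This is essentially a matter of chasing the definition of $\overline{\sigma}$ through its construction as a reduction of the symplectic action class, together with the standard fact that, on a prequantisation bundle, the holonomy of $\alpha$ around a loop equals the symplectic area of any bounding disc modulo $\pi$; once this dictionary is in place, the remainder of the argument is a short deck transformation computation and the claimed uniqueness up to the Reeb flow falls out of Lemma \ref{lem:prequant}.
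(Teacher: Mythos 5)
Your proposal is correct, and it diverges from the paper in one substantive way. The paper's proof of Proposition \ref{prop:lift} is a one-line deduction: existence and uniqueness of the lift up to the Reeb flow come from Lemma \ref{lem:prequant} exactly as you argue, but the embeddedness is outsourced to ``the main theorem of \cite{LiftingImmersions}''. You instead prove embeddedness directly by verifying the monodromy criterion that the paper records just after Lemma \ref{lem:prequant}, and your verification is sound: since $L$ is embedded, every double point of $\widetilde{L}$ is a pair of points in a single fibre of the covering, the covering is regular (its defining subgroup $G\subset\pi_1(L)$ contains the commutator subgroup, being the preimage of $\ker\overline{\sigma}$ under abelianisation), a path joining the two preimages projects to a loop representing a non-trivial element of the deck group $H_1(L,\Z)/\ker\overline{\sigma}\cong\Z_k$, and by construction $\overline{\sigma}$ of that class is the $\alpha$-holonomy, which is therefore non-trivial mod $\pi\Z$. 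What your route buys is a self-contained proposition that does not lean on an external theorem; what it costs is that you must (and do, in your last paragraph) carefully identify the deck action with the mod-$\pi$ holonomy, which is exactly the dictionary the paper sets up in Remark \ref{rem:sympaction} and the surrounding discussion, so nothing new is actually needed. One small point worth making explicit if you write this up: the well-definedness of $\overline{\sigma}$ modulo $\pi\Z$ uses the prequantisation integrality \eqref{eq:prequant} for absolute classes in $H_2(M)$ in addition to hypothesis \eqref{eq:k} for relative classes, and the simple connectivity of $M$ is what lets you phrase holonomy as symplectic area of a capping disc at all.
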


\subsection{Computing the Maslov class and degrees}
\label{sec:maslov}

In this subsection we provide tools for relating the Maslov class of a Legendrian submanifold
$$\Lambda \subset (E,\alpha) \xrightarrow{\pi} (M,\omega)$$
of a prequantum bundle and the Maslov class of its Lagrangian projection, which is a Lagrangian immersion
$$\iota \colon {\tilde L} \looparrowright (M,\omega).$$
The ultimate goal of this is to relate the Maslov class of the canonical Lagrangian Bohr--Sommerfeld cover $\iota \colon \tilde L \looparrowright (M,\omega)$ of a Lagrangian embedding $L \subset (M,\omega)$ and the Maslov class of its Legendrian lift $\Lambda \subset (E,\alpha)$ produced by Proposition \ref{prop:lift}.

By definition the first Chern class of a contact manifold $c_1(E,\alpha) \in H^2(E)$ is the first Chern class of the contact distribution $\ker \alpha$. For a prequantisation bundle it immediately follows that this class is the pull-back of the first Chern class $c_1(M,\omega) \in H^2(M)$ on $M$.

The Maslov class of a Legendrian in a contact manifold is a class $\mu_\Lambda \in H^2(E,\Lambda)$ which is defined as the Maslov class of the Lagrangian tangent planes of $T\Lambda \subset \ker \alpha$ inside the symplectic bundle $\ker \alpha$. As for the Chern class, in the case of a prequantisation bundle, this characteristic class is the pull-back of the Maslov class $\mu_{\pi(\Lambda)}$ of the Lagrangian immersion $\pi(\Lambda) \subset (M,\omega)$.

Again we make some simplifying assumptions. First, we assume that $\pi_1(E)=0$.
\begin{example}This is the case for the contact sphere as well as for the unit cotangent bundle of a sphere of dimension at least three.
\end{example}
In view of the long exact sequence of homotopy groups of a fibre bundle $S^1 \to E \xrightarrow{\pi} M$ the simple connectivity of the total space then implies that $\pi_1(M)=0$ as well. Furthermore, the long exact sequence of homotopy groups takes the form
\begin{equation}
\label{eq:les1}
\pi_2(S_1)=0 \to \pi_2(E) \xrightarrow{\pi_*} \pi_2(M) \xrightarrow{\frac{1}{\pi}\int_\cdot \omega} \Z \to 0=\pi_1(E),
\end{equation}
where the last map is equal to the curvature of $(E,\alpha) \to E$ i.e.~the symplectic area. Note that, under our assumptions $\pi_1(E)=\pi_1(M)=0$ of simple connectivity, we have $\pi_2(E) \cong H_2(E)$ and $\pi_2(M) \cong H_2(M)$ by the Hurewicz isomorphism.

Since we are mainly interested in monotone Lagrangians inside monotone symplectic manifolds, the following additional assumption is a natural simplification: we now assume that  the first Chern class of $(M,\omega)$ satisfies the proportionality $c_1(M)=N_\omega \cdot \omega/\pi$ for some $N_\omega \in \Z$; in particular, $c_1(M)$ vanishes on the classes of $H_2(M)$ of vanishing symplectic area. We can conclude that
\begin{lem}
\label{lem:chern}
If $\pi_1(E)=0$ and $c_1(M)$ is proportional to $\omega$, then the first Chern class of $(E,\alpha)$ vanishes on all of $H_2(E)=\pi_2(E)$.
\end{lem}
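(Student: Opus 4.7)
The plan is to combine the pull-back formula $c_1(E,\alpha)=\pi^*c_1(M,\omega)$, recalled just before the lemma, with the long exact sequence \eqref{eq:les1}. Under the standing assumptions $\pi_1(E)=\pi_1(M)=0$, the Hurewicz theorem identifies $\pi_2(E)\cong H_2(E)$ and $\pi_2(M)\cong H_2(M)$, so it suffices to evaluate $c_1(E,\alpha)$ on classes in $\pi_2(E)$.

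First I would note that, for any $A\in\pi_2(E)$,
\[
\langle c_1(E,\alpha),A\rangle=\langle \pi^*c_1(M,\omega),A\rangle=\langle c_1(M,\omega),\pi_*A\rangle.
\]
Next I would invoke the proportionality hypothesis $c_1(M,\omega)=N_\omega\cdot[\omega]$ to rewrite the right-hand side as $N_\omega\cdot\int_{\pi_*A}\omega$.

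The key step is then to use exactness of \eqref{eq:les1}: the map $\pi_2(M)\to\mathbb Z$ sending a sphere class $B$ to $\tfrac{1}{\pi}\int_B\omega$ has kernel equal to $\pi_*(\pi_2(E))$. In particular, for any $A\in\pi_2(E)$ the class $\pi_*A\in\pi_2(M)$ lies in this kernel, so $\int_{\pi_*A}\omega=0$. Combining this with the previous display yields $\langle c_1(E,\alpha),A\rangle=0$, which gives the vanishing claimed in the lemma.

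I do not expect any real obstacle here; the argument is essentially a two-line diagram chase, and the only subtlety is making sure the Hurewicz identification and the pull-back formula for $c_1$ are genuinely available, both of which are guaranteed by the hypotheses $\pi_1(E)=0$ and the prequantisation setup.
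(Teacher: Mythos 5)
Your argument is correct and is precisely the paper's proof, just written out in full: the pull-back identity $c_1(E,\alpha)=\pi^*c_1(M,\omega)$, the proportionality $c_1(M)=N_\omega\cdot[\omega]$, and exactness of the sequence \eqref{eq:les1} (which forces $\int_{\pi_*A}\omega=0$ for $A\in\pi_2(E)$) are exactly the three ingredients the authors cite. No gaps.
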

\begin{proof}
Since the Chern class $c_1(E,\alpha)$ is the pull-back of $c_1(M,\omega)$, the statement follows from the short exact sequence in \eqref{eq:les1} together with the assumption of proportionality between the Chern class and the symplectic form.

Also c.f.~\cite[Lemma 7.3]{BoothbyWangbundles} where it is shown that the first Chern class is torsion whenever the base is a simply connected monotone symplectic manifold.
\end{proof}

We also recall the following central definitions.

\begin{defn}
\label{defn:monotone}
A symplectic manifold $(M,\omega)$ is said to be {\em monotone} if the above constant $N_\omega>0$ is positive. More generally, a Lagrangian is {\em monotone} if its symplectic action class is a nonnegative multiple of its Maslov class. In particular, this automatically implies that the ambient symplectic manifold also is monotone.\end{defn}
In fact, the symplectic manifolds that we are interested in here are the following monotone ones.
\begin{example}
\begin{enumerate}
\item In the case $(S^{2n+1},\alpha_{\OP{st}}) \to (\CP^n,\omega_{\OP{FS}})$ we have $N_{\omega_{\OP{FS}}}=n+1$; recall that the area of a line is equal to $\pi$ with our convention.\label{ex:monotoneplane}
\item In the case $(UT^*S^n,pdq) \to (X,\omega)$ with $n \ge 3$, where $X$ is a monotone projective quadric $n-1$-fold, we have $N_\omega=n-1$. Recall that $\omega$ is normalised so that $X$ is uniruled by lines of area $\pi$ with our convention.\label{ex:monotonequadric}
\end{enumerate}
\end{example}

In the case when the Legendrian $\Lambda \subset (E,\alpha)$ is the lift of a Lagrangian immersion $\tilde L \looparrowright (M,\omega)$ to the prequantum bundle their Maslov classes have the following direct relation.
\blk
\begin{prop}
\label{prop:maslov}
For any $A\in H_2(E; \Lambda)$ the class $\pi_{\ast}(A)\in H_2(M;\Pi(\Lambda))$ has vanishing symplectic area. Since the Maslov class satisfies $$ \mu_{\Lambda}(A)=\mu_{\pi(\Lambda)}(\pi_*(A)),$$ it follows that the Legendrian lift of the Bohr--Sommerfeld cover of a monotone Lagrangian has vanishing Maslov class.
\blk
\end{prop}
\begin{proof}
As in the proof of Lemma \ref{lem:chern}, in view of the proportionality of Maslov class and symplectic area for chains with boundary on $L$, it suffices to show that $\pi_*(A)$ has vanishing symplectic area. This is the case by the definition of the curvature of $(E,\alpha) \to M$ which is equal to symplectic area; c.f.~the short exact sequence \eqref{eq:les1}.
\end{proof}

Next we compute the degree of the Reeb chords on the Legendrian lift $\Lambda$ of the canonical Bohr--Sommerfeld cover of an monotone Lagrangian embedding $L \subset (M,\omega)$. For a computation of the degree of the periodic Reeb orbits in the same geometric setting we refer to \cite[Theorem 3.1]{Hong:Conley} by Hong; in particular, monotonicity implies that the degrees of the periodic Reeb orbits are strictly positive.

In the following we assume that $\Lambda$ is connected, which together with the assumption $\pi_1(E)=0$  implies that for a Reeb chord $c$ of $\Lambda$, $[c] \in \pi_1(E,\Lambda)$ is the trivial class. The latter fact combined with the vanishing $\mu_\Lambda=0$ of the Maslov class established in Proposition \ref{prop:maslov} above implies that the degree is a well-defined integer $|c| \in \Z.$

We briefly recall the definition of the degree, which is given by $|c|=\OP{CZ}(\gamma)-1$ in terms of the Conley--Zehnder index, where $\gamma$ is a path on $\Lambda$ that connects the endpoint of $c$ with the starting point of $c$. The Conley--Zehnder index depends on the choice of trivialisation of the contact distribution along the cycle $c + \gamma$. We choose this trivialisation to be induced by a choice of chain $D$ inside $E$ with boundary $\partial D =c + \gamma$. Note that, for the prequantisation form $\alpha$, every Reeb chord $c$ on $\Lambda$ lives in a Bott manifold of dimension precisely $\dim \Lambda$ (here we use the assumption that the image $\pi(\Lambda)$ is embedded); the Bott version of the above Conley--Zehnder index must thus be used here. In this case the Bott manifold is of maximal dimension, which implies that $\OP{CZ}(\gamma)$ can be computed in the following manner: Consider the trivialisation of the contact distribution along $c + \gamma$ induced by $D$; the Conley--Zehnder index $\OP{CZ}(\gamma)$ is given by the Maslov class of the loop of Legendrian tangent planes $T\Lambda$ along $\gamma$ extended over the Reeb chord $c$ via the Reeb flow. (Since the Bott manifold is of full dimension the Reeb flow takes the Legendrian tangent plane at the starting point of $c$ onto the Legendrian tangent plane at its end point, so we indeed get a closed loop of Legendrian planes.)

\begin{remark}
Remember that the degree $|c|$ of a Reeb chord is equal to the expected dimension of the moduli space of unparametrised pseudholomorphic half-planes in the symplectisation with boundary on $\R \times \Lambda \subset \R \times E$, whose asymptotics are constrained to the Reeb chord $c$.
\end{remark}
\blk

\begin{prop}
\label{prop:bottdegree}
Denote by $\Lambda \subset (E,\alpha)$ a connected Legendrian lift of the canonical $k$-fold Bohr--Sommerfeld cover of a monotone Lagrangian embedding $L \subset (M^{2n},\omega).$ Any Reeb chord $c$ on $\Lambda$ is of length $\ell(c)=\pi \cdot l/k$ for some $l \in \N$; the Bott degree of a chord of length $\pi \cdot l/k$ is equal to
$$|c|=2N_\omega\cdot l/k-1 \in \Z$$
where $N_\omega$ is the minimal Chern number of $M$ as above. (Observe that $N_\omega/k$ is the minimal Maslov number of $L$ by the monotonicity; thus it is an integer.)

\end{prop}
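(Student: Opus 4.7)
The plan is to address the two claims of the proposition separately: first to establish the quantisation $\ell(c) = \pi l/k$ of chord lengths via the Bohr--Sommerfeld structure of the cover, and then to compute the Bott degree $|c| = \OP{CZ}(\gamma) - 1$ by dimensional reduction of the Maslov index to the base $(M,\omega)$.

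For the length, I consider a Reeb chord $c$ from $p_1$ to $p_2$ on $\Lambda$ of length $T$. Under the diffeomorphism $\pi\colon \Lambda \xrightarrow{\sim} \widetilde L$, the endpoints correspond to two preimages $q_1,q_2 \in \widetilde L$ of a common point $p \in L$ related by a sheet shift $m \in \Z_k$. I choose a capping path $\gamma \subset \Lambda$ from $p_2$ to $p_1$ and, using $\pi_1(E)=0$, a $2$-chain $D \subset E$ bounding the loop $c \cup \gamma$. Since $\gamma$ is Legendrian and $d\alpha = \pi^*\omega$, Stokes' theorem yields
$$T = \int_c \alpha = \int_D d\alpha = \int_{\pi_* D} \omega,$$
where the $2$-chain $\pi_* D \subset M$ has boundary the loop $\gamma_M = \pi\circ\gamma$; this loop descends through $\widetilde L \to L$ to a based loop $\gamma_L$ on $L$ with $\bar\sigma([\gamma_L]) = m$. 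The defining property of the canonical $k$-fold Bohr--Sommerfeld cover then forces $T \equiv \pi m/k \pmod{\pi}$, so that $T = \pi l/k$ for some $l \in \N$ with $l \equiv m \pmod{k}$.

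For the degree, observe first that the Reeb flow $\phi_T$ preserves $\Lambda$ setwise and acts there as the deck transformation $\tau^m$ on $\widetilde L$; in particular $\phi_{T\,*}T_{p_1}\Lambda = T_{p_2}\Lambda$, so that the loop of Lagrangian planes in $\xi$ along $c\cup\gamma$ defined by $\phi_{tT\,*}T_{p_1}\Lambda$ along $c$ and by $T_{\gamma(s)}\Lambda$ along $\gamma$ closes up. Its Maslov index, trivialised using $D$ (possible since $c_1(\xi)=0$ by Lemma \ref{lem:chern}), is by definition $\OP{CZ}(\gamma)$. The key observation is that, under the natural bundle isomorphism $\xi \cong \pi^*TM$ induced by $\pi_*$, the Reeb flow acts trivially on $\xi$: indeed $\pi\circ\phi_t = \pi$ implies $\pi_*\circ\phi_{t\,*} = \pi_*$. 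Hence the projected loop in $TM$ is simply the loop of tangent planes to $L$ along $\gamma_L$, whose Maslov index in the trivialisation induced by $\pi_* D$ equals $\mu_L([\pi_* D])$. Using monotonicity of $L$ in the form $\mu_L = (2N_\omega/\pi)\,\sigma_L$ on $H_2(M,L)$, we conclude
$$\OP{CZ}(\gamma) = \mu_L([\pi_* D]) = \frac{2N_\omega}{\pi}\cdot T = \frac{2N_\omega l}{k},$$
and therefore $|c| = 2N_\omega l/k - 1$.

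The main technical point will be justifying the compatibility of framings in the Maslov index step: one must verify that the trivialisation of $\xi$ along $c\cup\gamma$ extending over the disc $D$ and the one obtained by pulling back via $\pi_*$ a trivialisation of $TM$ along $\gamma_M$ extending over the $2$-chain $\pi_* D$ produce the same Maslov index for the loop of Lagrangian planes. This reduces to the identity $c_1(\xi)[D] = c_1(TM)[\pi_* D]$, which is tautological from $c_1(\xi) = \pi^* c_1(TM)$, and ensures that the two framings differ by a null-homotopic $U(n)$-valued map over the contractible disc $D$, and therefore induce the same Maslov index on the boundary.
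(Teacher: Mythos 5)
Your proposal is correct and follows essentially the same route as the paper: take a chain $D$ with $\partial D = c + \gamma$, identify $\OP{CZ}(\gamma)$ computed in the trivialisation induced by $D$ with $\mu_L(\pi_*(D))$, and combine monotonicity with the area identity $\int_{\pi_*(D)}\omega = \ell(c)$. The paper's proof is a two-line sketch of exactly this argument; you supply the details it leaves implicit (the Stokes computation giving the length quantisation, the triviality of the Reeb flow on $\xi\cong\pi^*TM$, and the compatibility of framings over $D$).
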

\begin{proof}
Take a chain $D \subset E$ with boundary $c + \gamma$ such that $\gamma \subset \Lambda$ is a path which connects the two endpoints of the chord $c$. The Conley--Zehnder index $\OP{CZ}(\gamma)$ computed in the trivialisation induced by $D$ is equal to
$$\mu_L(\pi_*(D))=2N_\omega\cdot l/k$$
as a direct consequence of the monotonicity of $L$, and the area property $\int_{\pi_*(D)}\omega=\pi\cdot l/k.$ (Note that $[\pi_*(D)] \in H_2(M,L)$ and that the curvature of $(E,\alpha)$ coincides with the symplectic area.)
\end{proof}

\section{Some concrete Lagrangian tori and their lifts}
\label{sec:examples}

It is now time to restrict attention to some particular examples of Legendrian tori produced by Proposition \ref{prop:lift} as lifts of canonical Bohr--Sommerfeld covers.

\subsection{Lifts of monotone tori in the quadric surface}

The quadric surface is cut out by the polynomial equation
$\overline{\{z_1^2+z_2^2+z_3^2=1\}} \subset \CP^3$. We endow this
variety with the restriction of the Fubini--Study symplectic form,
which descends to the standard monotone symplectic form
$$ (\overline{\{z_1^2+z_2^2+z_3^2=1\}}, \omega_{\OP{FS}}) \cong (\CP^1 \times \CP^1,\omega_{\CP^1} \oplus \omega_{\CP^1}))$$
under a suitable identification. The affine part of this variety $\{z_1^2+z_2^2+z_3^2=1\} \subset
\C^3$ is symplectomorphic to an open subset $(DT^*S^2,d(pdq))$ of
the cotangent bundle. We consider the prequantisation space $E$ which is the unit cotangent bundle  $(E,\alpha)=(UT^*S^3,pdq)$  \color{black}  of the round three-sphere of radius $1/2.$  (The choice of radius depends on our previously fixed convention that the minimal period of a closed Reeb orbit is equal to $\pi$.) \color{black}  This bundle can be identified with the $S^1$-bundle corresponding to the complex line-bundle $\mathcal{O}(-1,-1)$ over the conic.

Any monotone Lagrangian torus inside $\CP^1 \times \CP^1$ has a symplectic action class which is contained inside the subset
$$\frac{\pi}{2}\cdot H^2(\CP^1 \times \CP^1,L,\Z) \subset H^2(\CP^1 \times \CP^1,L,\R),$$
and which, moreover, assumes the value $\frac{\pi}{2}$ on some homology class. This is a consequence of the integrality of the  Maslov class together with the fact that $c_1=N_\omega\cdot\omega/\pi$ with $N_\omega=2$; see Example \ref{ex:monotonequadric} above with $n=3$. \color{black}  In fact, the symplectic action class cannot be an integer multiple of $\pi$ by the result \cite[Theorem 1.21]{CM17} of Cieliebak--Mohnke.  In other words, there must exist a class of symplectic area precisely $\pi/2$ and thus of Maslov index two (in view of the monotonicity). \color{black}

The most basic example is the monotone product torus $S^1 \times S^1 \subset
\CP^1 \times \CP^1,$ which we now proceed to direct our attention to. (This torus also goes under the name of ``the Clifford torus'' some times, but in order to avoid confusion we will not call it by that.)

Recall that given a knot $K$ in $S^3$, one can define the conormal lift of it in the following way. First one takes $L_K$ defined by
$$L_K=\{(q,p)\ | \  q\in K, \langle p,v \rangle=0\ \mbox{for all}\ v\in T_qK\}\subset T^{\ast}S^3.$$
It is a standard exercise to check that $L_K$ is a Lagrangian submanifold of $T^{\ast}S^3$. Then one takes the
unit cotangent bundle $UT^{\ast} S^3$ of unit covectors of $S^3$ with respect to some
metric. The conormal lift $\Lambda_K$  is given by $\Lambda_K=L_K\cap UT^{\ast} S^3$, and it is a Legendrian torus in $UT^{\ast} S^3$.
\begin{thm}
  \label{thm:unknot}
Under the identification of $E$ with $(UT^*S^3,pdq)$ the Legendrian
lift of the twice covered monotone product torus is Legendrian isotopic to
the conormal lift of the unknot.
\end{thm}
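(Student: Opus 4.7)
The plan is to compute the image of the conormal lift $\Lambda_K$ of the unknot under the symplectic reduction $\pi\co UT^*S^3 \to \CP^1\times\CP^1$, verify that this image is the twofold covering immersion of the monotone product torus $S^1\times S^1$, and then invoke Lemma \ref{lem:prequant}, which asserts that any two Legendrian lifts of a Bohr--Sommerfeld immersion of a connected manifold differ only by a global application of the Reeb flow, and hence are Legendrian isotopic.

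First I would make explicit the identification $UT^*S^3/S^1 \cong \mathrm{Gr}^+_2(\R^4) \cong S^2\times S^2$. The quotient of $UT^*S^3$ by its periodic Reeb (geodesic) flow is the space of oriented great circles in $S^3\subset\R^4$, which equals the oriented Grassmannian of 2-planes. The standard decomposition $\Lambda^2\R^4 = \Lambda^2_+\oplus\Lambda^2_-$ into self-dual and anti-self-dual 2-forms yields the isomorphism with $S^2\times S^2$, associating to an oriented 2-plane $\Pi$ the pair $(\omega_+/|\omega_+|,\omega_-/|\omega_-|)$, where $\omega_++\omega_-=\omega_\Pi$ is the unit 2-form of $\Pi$. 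With the conventions fixed in Example \ref{ex:prequant}(2) this identifies the symplectic quotient with $(\CP^1\times\CP^1,\omega_{\mathrm{FS}}\oplus\omega_{\mathrm{FS}})$, and the monotone product torus $S^1\times S^1$ with the product of equators of the two $S^2$ factors (these are the standard monotone Lagrangian circles in each $\CP^1$).

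Next I would take $K\subset S^3$ to be the great circle $K = S^3\cap\Pi_0$ for $\Pi_0 = \{x_3=x_4=0\}$, parametrise the conormal lift by $(\theta,\phi)\in (S^1)^2$ with footpoint $q(\theta)=(\cos\theta,\sin\theta,0,0)$ and unit conormal direction $v(\phi)=(0,0,\cos\phi,\sin\phi)$, and expand the 2-form $q(\theta)\wedge v(\phi)\in\Lambda^2\R^4$ in the basis $\{e_i\wedge e_j\}$. A short calculation projecting onto the standard orthonormal bases of $\Lambda^2_\pm$ shows that the normalised components $\omega_\pm/|\omega_\pm|$ lie on the equators of the respective $S^2$ factors and are parametrised by $\theta+\phi$ and $\theta-\phi$, respectively. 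Thus $\pi(\Lambda_K)$ is precisely the twofold covering immersion of the product of equators, which is the monotone product torus. By Remark \ref{rem:sympaction} this immersion is Bohr--Sommerfeld; and since $[\omega_{\mathrm{FS}}\oplus\omega_{\mathrm{FS}}]$ evaluates to $\pi/2$ on a Maslov-2 disc bounded by the torus, the minimal $k$ in Equation \eqref{eq:k} is $k=2$, so this is in fact the canonical twofold Bohr--Sommerfeld cover described in Section \ref{sec:bscovers}.

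To conclude, the conormal lift $\Lambda_K$ is a connected Legendrian lift of the canonical Bohr--Sommerfeld cover of $S^1\times S^1$, so by Lemma \ref{lem:prequant} it differs from the lift produced by Proposition \ref{prop:lift} only by a global application of the Reeb flow, and the two are therefore Legendrian isotopic. The main technical step is the bookkeeping in the first two paragraphs: carefully matching the symplectic structure from the Marsden--Weinstein reduction on $UT^*S^3/S^1$ with the monotone Fubini--Study form on $\CP^1\times\CP^1$, and verifying that the product of equators under the self-dual/anti-self-dual identification really corresponds to the monotone product torus. Once this dictionary is in place, the remaining linear-algebra computation is routine.
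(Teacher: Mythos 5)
Your proof is correct, but it takes a genuinely different route from the paper's. The paper argues by symmetry: the product of equators is invariant under the Hamiltonian $\T^2$-action on the quadric, this action lifts to strict contactomorphisms of $UT^*S^3$ (where it is induced by the toric $\T^2$-action on $\C^2 \supset S^3$ acting by isometries), so the Legendrian lift of the canonical cover is $\T^2$-invariant and must therefore be the conormal lift of some orbit torus $S^1_a \times S^1_b \subset S^3$; all of these, including the degenerate one over the unknot $S^1_1\times\{0\}$, are related by the Reeb flow. You instead run the argument in the opposite direction: you compute the image of the conormal lift of a great circle under the symplectic reduction $UT^*S^3 \to \mathrm{Gr}^+_2(\R^4)\cong S^2\times S^2$ explicitly via the self-dual/anti-self-dual decomposition, identify it as the twofold Bohr--Sommerfeld covering immersion of the product of equators, and then invoke the uniqueness of Legendrian lifts up to Reeb flow from Lemma \ref{lem:prequant}. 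Both arguments ultimately rest on the same uniqueness statement; what the paper's version buys is that no explicit dictionary between $UT^*S^3/S^1$ and $S^2\times S^2$ is needed (at the cost of quoting the classification of $\T^2$-invariant Legendrians in the contact toric structure on $UT^*S^3$), whereas your version is self-contained linear algebra. Two small points worth spelling out in your write-up: first, the identification of the computed twofold cover (the index-two subgroup generated by $(1,1)$ and $(1,-1)$ in the equator basis) with the \emph{canonical} cover follows because the projection of a Legendrian is automatically Bohr--Sommerfeld, so the corresponding subgroup is contained in $\ker\overline{\sigma}$, and both have index two; second, one should note that the area normalisations in your Grassmannian model match the convention of Example \ref{ex:prequant}(2) (minimal Reeb period $\pi$, lines of area $\pi$), since the theorem is stated relative to that fixed identification of $E$ with $(UT^*S^3,pdq)$. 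Neither is a gap, just bookkeeping you have already flagged.
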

\begin{proof}
The monotone product torus $S^1 \times S^1 \subset \CP^1 \times \CP^1$ given as the product of equators is invariant under the standard Hamiltonian $\T^2$-action on the conic (which is toric). One can readily check that this $\T^2$-action lifts to strict contactomorphisms of the prequantisation space, and that the Legendrian lift of the canonical Bohr--Sommerfeld cover is invariant under the latter $\T^2$-action.

From the point of view of $UT^*S^3,$ this $\T^2$-action is induced by   first restricting the standard toric $\T^2$-action on $\C^2$ to the unit sphere $S^3$, and then lifting it the cotangent bundle $T^*S^3$. \color{black}  (Observe that this $\T^2$-action is by isometries on the round three-sphere.) We refer to e.g.~\cite[Section 6.1]{ContactToric} for a careful treatment of this contact toric structure on $UT^*S^3.$ It follows that any Legendrian which is invariant under this $\T^2$-action must thus live above a (possibly degenerate) torus in the base of the form $S^1_a \times S^1_b \subset S^3$ for $a^2+b^2=1,$  where $S^1_r \subset \C$ denotes the unit circle of radius $r$.  \color{black}  In other words, the Legendrian lift of the Bohr--Sommerfeld cover is the conormal lift of one of these tori.

Finally, since the one-sided conormal lifts of all these tori can be seen to be Legendrian isotopic to the conormal lift of an unknot $S^1_1 \times \{0\} \subset S^3$ by a suitable application of the Reeb flow induced by the round metric, the result now follows.
\end{proof}
\begin{rem}
Alternatively, the result can be proven following the same recipe as in the proof of Theorem \ref{thm:frontsoflifts} below in Section \ref{sec:frontsoflifts}. There the Legendrian isotopy is obtained by describing a homotopy of Bohr--Sommerfeld Lagrangian immersions via their images under a Lefschetz fibration. In this case one then uses the Lefschetz fibration which on the affine part $\{z_1^2+z_2^2+z_3^2=1\} \subset \C^3$ of the conic is equal to the restriction of $(z_1,z_2,z_3) \mapsto z_3.$ In fact, the present situation is significantly simpler compared to the situation in Theorem \ref{thm:frontsoflifts}, due to the fact that this defines a Lefschetz fibration on the full conic, and not just its affine part. In order to identify the conormal lift of the unknot, we refer to \cite{KnotContHom}, where it is shown that a suitable representative has a Lagrangian projection inside the conic that lives above an immersed figure-8 curve that encircles both critical values in the Lefschetz fibration.   The proof above instead uses a more direct approach.  \color{black}
\end{rem}

There are infinitely many monotone Lagrangian tori inside the quadric surface by a result \cite{Vianna17} due to Vianna. We do not know if any other of these tori have a lift that is isotopic to the conormal lift of a smooth knot.

In any case, the above Legendrian lift of the product torus is not subloose since the conormal lift of the unknot admits an exact Lagrangian filling inside $(T^*S^3,d(pdq))$; e.g.~the Lagrangian conormal of the knot. In fact, since the canonical Bohr--Sommerfeld covers of monotone Lagrangians in the conic are \emph{two}-fold covers, it is not difficult to show the following.
\begin{prop}
  \label{prp:monotonefilling}
  Any monotone Lagrangian torus $L \subset (\CP^1\times\CP^1,\omega_{\OP{FS}}\oplus\omega_{\OP{FS}})$ has a two-fold canonical Bohr--Sommerfeld cover whose Legendrian lift admits a monotone Lagrangian filling inside the complex line bundle $\mathcal{O}(-1,-1)$ which is contained in the preimage of $L$ under the bundle projection.
\end{prop}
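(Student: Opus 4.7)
My plan is to construct the filling as the closed unit-disc bundle of a canonically defined flat real line sub-bundle of $\mathcal{O}(-1,-1)|_L$. The first step is to observe that, since $L$ is Lagrangian, the curvature of the prequantisation connection form $\alpha$ vanishes when restricted to the $S^1$-bundle $S(\mathcal{O}(-1,-1))|_L \to L$, giving it a flat unitary connection. The holonomy is a homomorphism $\rho \colon \pi_1(L) \to S^1$, and by the analysis preceding the proposition (which uses the Cieliebak--Mohnke bound), the image of $\rho$ is precisely the order-two subgroup $\{\pm 1\} \subset S^1$. Consequently the structure group of $\mathcal{O}(-1,-1)|_L$ reduces to $\Orth(1) \subset \U(1)$, so this complex line bundle is the complexification of a canonically defined flat real line sub-bundle $\xi_L \to L$.

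I would then take $\Sigma \coloneqq D(\xi_L)$ to be the closed unit-disc bundle of $\xi_L$, a compact 3-dimensional submanifold of $\pi^{-1}(L) \subset \mathcal{O}(-1,-1)$. Its boundary $S(\xi_L)$ is a connected twofold cover of $L$ which, because $\xi_L$ is flat, is horizontal in the $S^1$-bundle $S(\mathcal{O}(-1,-1))|_L$ and hence Legendrian in $UT^*S^3$; by the uniqueness statement in Proposition \ref{prop:lift} this boundary coincides with the Legendrian lift $\Lambda$. To verify that $\Sigma$ is Lagrangian in $\mathcal{O}(-1,-1)$, I would appeal to the isotropic tubular neighbourhood theorem, identifying a neighbourhood of $L$ in $\mathcal{O}(-1,-1)$ with a neighbourhood of the zero section of the symplectic vector bundle $T^*L \oplus \mathcal{O}(-1,-1)|_L \to L$. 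In a local flat trivialisation of $\mathcal{O}(-1,-1)|_L$ adapted to $\xi_L$, this model takes the standard Darboux form $\omega = -d(p\,dq) + du \wedge dv$, in which $L = \{p = u = v = 0\}$ and $\xi_L = \{p = v = 0\}$; hence locally $\Sigma = \{p = 0,\, v = 0\}$ is manifestly Lagrangian, and the $\Orth(1)$-reduction of the structure group ensures these descriptions glue to a globally Lagrangian submanifold.

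Finally, for monotonicity, the bundle projection $\pi$ and the zero section define a deformation equivalence of pairs $(\mathcal{O}(-1,-1), \Sigma) \simeq (\CP^1\times\CP^1, L)$ that respects both the symplectic area and the Maslov index homomorphisms on relative $\pi_2$. The monotonicity of $\Sigma$ therefore follows at once from that of $L$. The principal technical obstacle I foresee is checking the isotropic neighbourhood identification carefully enough to guarantee that the flat real sub-bundle $\xi_L$ is mapped precisely to the slice $\{v = 0\}$ consistently across overlapping local trivialisations; this is essentially bookkeeping with the connection data, but it is the place where one needs to use the full global flatness of $\xi_L$ rather than only its local flatness.
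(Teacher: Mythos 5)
Your construction is the same as the paper's: the filling is the real line sub-bundle of $\mathcal{O}(-1,-1)|_L$ singled out by the flat prequantisation connection (equivalently, its unit-interval bundle inside the disc bundle), whose $S^0$-bundle in the unit-sphere bundle is the Legendrian lift. The paper states this in two sentences without verifying the Lagrangian or monotonicity conditions, so your additional checks are consistent elaborations of the same argument rather than a different route.
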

\begin{proof}
The prequantisation bundle is the unit-sphere bundle of the complex line bundle
 $\mathcal{O}(-1,-1) \to \CP^1\times\CP^1,$ and in particular the corresponding disc-bundle is an a monotone symplectic filling of this contact manifold. (Recall the general fact that the unit-sphere bundle of a negative line bundle is a pseudconvex boundary of the disc bundle.) The Lagrangian filling itself can then be taken to be the full $\R$-subbundle of the $\C$-bundle defined over the Lagrangian $L$, whose associated $S^0$-bundle inside the unit sphere bundle (i.e.~the prequantisation bundle) coincides with the Legendrian lift. Note that this Lagrangian filling transversely intersects the base $\CP^1 \times \CP^1$ precisely in the Clifford torus.
\end{proof}
We believe, but do not prove, that these monotone Lagrangian fillings also are obstructions to being subloose.

\subsection{Lifts of monotone tori in the projective plane}
\label{sec:examples-plane}

By Theorem \ref{noembeddedBSLagr} the symplectic action class of any embedded Lagrangian cannot take values only in $\Z\cdot\pi.$  Recall that $c_1=N_\omega\cdot\omega/\pi$ with $N_\omega=3$ \blk in the case of $\CP^2$ (see Example \ref{ex:monotoneplane} above with $n=2$). If a Lagrangian in $\CP^2$ is oriented and monotone, it thus follows that its symplectic action class cannot take values only in $\Z\cdot\pi/2$ either; a class of symplectic area equal to a half-integer multiple of $\pi$ would have an odd Maslov index (this only happens when the Lagrangian is non-orientable). A further consideration of the integrality of the Maslov class in conjunction with $N_\omega=3$ we now conclude that  \color{black}
a monotone Lagrangian torus $L$ inside the projective plane has a
symplectic action class which lives inside the subset
$$ \frac{\pi}{3}\cdot H^2(\CP^2,L,\Z) \subset H^2(\CP^2,L,\R),$$
and which, moreover, assumes the value $\frac{\pi}{3}$ on some homology class. \color{black}  In other words, an appropriate three-fold cover of any monotone Lagrangian torus is Bohr--Sommerfeld for the prequantisation space
$$ S^1 \to S^5 \to \CP^2.$$

\begin{rem}
The Audin conjecture holds for Lagrangian tori in the projective planes $\CP^n$ by work of Cieliebak--Mohnke \cite{CM17}, i.e.~they admit a relative homology class of Maslov index two. From this strong fact, combined with the fact $N_\omega=n+1$, it then more generally follows that any monotone torus in $\CP^n$ has a symplectic action class which lives inside the subset
$$ \frac{\pi}{n+1}\cdot H^2(\CP^n,L,\Z) \subset H^2(\CP^n,L,\R),$$
and which, moreover, assumes the value $\frac{\pi}{n+1}$ on some homology class. \color{black}
\end{rem}

The first two well-known examples of monotone Lagrangian tori are the Clifford and the Chekanov torus \cite{Ch98,TwistTori}. In terms of the fibration
$$f \colon \Sigma \to \dot{D}^2_{1/2}$$
from Section \ref{sec:lefschetz}, i.e.~the restriction of the Lefschetz fibration $(z_1,z_2) \mapsto z_1 \cdot z_2$ to the subset $\Sigma \subset \CP^2,$ these two tori have the following descriptions.
\begin{itemize}
\item The {\bf Clifford torus} is the preimage of an embedded closed curve in $(\dot{D}^2_{1/2},(1/2)d(r\,d\theta))$ of symplectic area $\pi/3$ which \emph{encircles} the origin; while
\item The {\bf Chekanov torus} is the preimage of an embedded closed curve in $(\dot{D}^2_{1/2},(1/2)d(r\,d\theta))$ of symplectic area $\pi/3$ which does \emph{not} encircle the origin.
\end{itemize}
See Lemma \ref{lem:pullback} for more details concerning the area considerations.

The Legendrian lift of the Bohr--Sommerfeld cover of the Clifford torus is already well studied, since it is the link of the singularity of the Harvey--Lawson cone \cite{HarveyLawson}. Its contact topology was been studied by Nadler \cite{Nadler}, Treumann--Zaslow \cite{TreumannZaslow}, as well as Baldridge--McCarthy--Vela-Vick \color{black} \cite{LiftingImmersions}. By Theorem \ref{thm:frontsoflifts} below, this torus is, in addition, Legendrian isotopic to the ``knotted'' Legendrian torus considered  in \cite{KnottedLegendrianSurface} by the first author.

\begin{thm}
\label{thm:frontsoflifts}
The threefold Bohr--Sommerfeld coverings of the Clifford and Chekanov tori in $\CP^2$ have Legendrian lifts in the prequantisation space $S^5\to\CP^2$ that are Legendrian isotopic into a Darboux ball with front projections as shown in Figures \ref{fig:Clifford} and \ref{fig:Chekanov}.
\end{thm}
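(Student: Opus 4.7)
The strategy is to realize the threefold Bohr--Sommerfeld covers as Lagrangian preimages $f^{-1}(\gamma)$ of immersed curves $\gamma \subset \dot D^2_{1/2}$ under the symplectic reduction $f \colon \Sigma^o_{(1,1),1} \to \dot D^2_{1/2}$ of Section \ref{sec:lefschetz}, and then deform each curve through a regular homotopy $\gamma_t$ into an immersed figure-8 type curve concentrated near the boundary $\partial \dot D^2_{1/2}$. By Lemma \ref{lem:pullback} the Lagrangian $f^{-1}(\gamma_t)$ is Bohr--Sommerfeld precisely when the signed areas enclosed by the loops of $\gamma_t$ (with respect to $(1/2)d(r\,d\theta)$) all lie in $\pi\cdot\Z$; hence any regular homotopy respecting this integrality condition lifts to a Legendrian isotopy of the corresponding Legendrian lifts in $S^5$ by (the uniqueness part of) Proposition \ref{prop:lift}.

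Up to Hamiltonian isotopy we may arrange that the Clifford and Chekanov tori are the $S^1$-symmetric representatives $f^{-1}(C_{\OP{Cl}})$ and $f^{-1}(C_{\OP{Ch}})$, where $C_{\OP{Cl}} \subset \dot D^2_{1/2}$ is an embedded curve of area $\pi/3$ that encircles the origin and $C_{\OP{Ch}}$ is an embedded curve of the same area that does not. The canonical threefold Bohr--Sommerfeld covers are then $f^{-1}(\gamma_0^{\OP{Cl}})$ and $f^{-1}(\gamma_0^{\OP{Ch}})$, where $\gamma_0^{\OP{Cl}}$ is the 3-to-1 cover of $C_{\OP{Cl}}$ (a curve triply wrapping the origin with signed area $\pi$) and $\gamma_0^{\OP{Ch}}$ is the 3-to-1 cover of $C_{\OP{Ch}}$.

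The main step is to write down explicit regular homotopies $\gamma_t$ deforming $\gamma_0^{\OP{Cl}}$ (resp. $\gamma_0^{\OP{Ch}}$) to generic immersed curves $\gamma_1^{\OP{Cl}}$ (resp. $\gamma_1^{\OP{Ch}}$) concentrated in a small annular neighbourhood of $\partial \dot D^2_{1/2}$, while keeping all loop areas in $\pi\cdot\Z$. Since the integrality condition constrains only finitely many real parameters, standard area-preserving adjustments (insertion/removal of compensating bigons) allow the required deformation. Once the Lagrangian lies close to $\ell_\infty$, we switch from the symplectic reduction description (Approach $(\alpha)$) to the symplectic fibration description (Approach $(\beta)$) of Section \ref{sec:beta}, which applies in a neighbourhood of $\ell_\infty$. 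For $\mathbf{a}=\mathbf{n}=(1,1)$ and $\mathbf{c}=(0,1)$, the monodromy computed in Lemma \ref{lem:parallel} is a rotation by $\pi$, so Corollary \ref{cor:symmetry} determines the Lagrangian in a neighbourhood of $\ell_\infty$ from its trace in a single fibre disc; this lets us further isotope the Lagrangian across $\ell_\infty$ into a small contact Darboux ball.

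Finally, the $S^1$-symmetry of the construction (the fibre rotation of $f$) endows the resulting Legendrian with the rotational symmetry of the front about the $z$-axis. The front itself is then read off directly from the generating curve: arcs on the upper/lower sheet and horizontal cusps are dictated by the projection of $\gamma_1$ to its real and imaginary parts, and the transverse self-intersection of $\gamma_1$ produces the crossing in the front. Matching up these ingredients yields precisely the fronts in Figures \ref{fig:Clifford} and \ref{fig:Chekanov}. The main obstacle is compatibility: one must verify that the $(\beta)$-type isotopy across $\ell_\infty$ matches consistently with the $(\alpha)$-type isotopy on $\Sigma^o$, so that the areas tracked along $\gamma_t$ correctly encode the symplectic areas of the resulting Lagrangian after crossing $\ell_\infty$. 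Once this bookkeeping is carried out, verifying that the final front agrees with the figures is a direct combinatorial check using the formulas of Section \ref{sec:maincase}.
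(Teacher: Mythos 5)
Your overall route is the same as the paper's: describe the tori and their covers via the fibration $f \colon \Sigma^o_{(1,1),1} \to \dot D^2_{1/2}$, deform the projected curves through a regular homotopy of Bohr--Sommerfeld immersions, switch to the approach-($\beta$) description of Corollary \ref{cor:symmetry} to cross $\ell_\infty$, end at a small exact curve that can be shrunk into a Darboux ball, and use the $S^1$-symmetry to produce the front. However, there is one genuine gap. You assert that any regular homotopy $\gamma_t$ preserving the integrality of the \emph{loop} areas ``lifts to a Legendrian isotopy of the corresponding Legendrian lifts by (the uniqueness part of) Proposition \ref{prop:lift}.'' This is false: a homotopy of Bohr--Sommerfeld immersions lifts only to a regular homotopy of Legendrian \emph{immersions}, and self-intersections of the lifts appear generically in one-parameter families and cannot in general be removed. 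A double point of the lift occurs exactly when the ($\pi\Z$-valued) action potential agrees on the two sheets over a double point of $f^{-1}(\gamma_t)$, i.e.\ when an arc of $\gamma_t$ connecting the two sheets encloses signed area in $\pi\cdot\Z$ with respect to the form of Lemma \ref{lem:pullback}. The actual content of the proof is the construction of a specific homotopy for which this never happens; this is why the paper's figures carry explicit area labels ($\pi/3$, $\pi/6$, $\pi/6-\epsilon$, $\pi/10$, \dots) along the whole deformation, including during the passage over $\ell_\infty$. Your appeal to ``standard area-preserving adjustments'' constrains only the loop areas and does not control these arc areas, so as written the argument does not establish a Legendrian \emph{isotopy}.

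A smaller point: your final step of reading the front ``directly from the real and imaginary parts of $\gamma_1$'' skips the mechanism the paper actually uses, namely lifting a suitable double cover of $\gamma_1$ under the branched double cover $\dot D^2 \to \dot D^2$ to a curve invariant under $w \mapsto -w$, taking the Legendrian lift of that exact immersion in $\R^2$ to a knot with front symmetric under $x\mapsto -x$, and then performing a \emph{symmetric} $S^1$-spin. This is where the winding numbers ($1$ for $\gamma_{\OP{Cl}}$, $0$ for $\gamma_{\OP{Ch}}$) enter to decide which double cover lifts, and it is needed to match the cone points and cusp edges in Figures \ref{fig:Clifford} and \ref{fig:Chekanov}; without it the identification of the fronts is not pinned down.
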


\begin{figure}[t]
\centering
\labellist
\pinlabel $x$ at -5 11
\pinlabel $y$ at 72 40
\pinlabel $z$ at 25 90
\pinlabel $\color{red}a$ at 279 43
\pinlabel $\color{red}b$ at 205 92
\pinlabel $\color{red}c$ at 213 91
\pinlabel $\Lambda_{\OP{Ch}}$ at 258 83
\endlabellist
\includegraphics{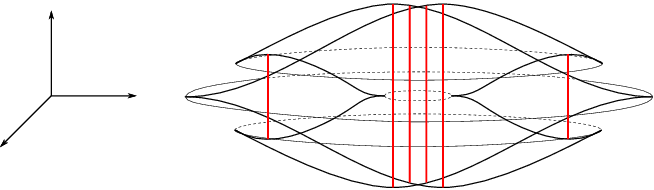}
\caption{Front projection of the Legendrian lift of the threefold Bohr--Sommerfeld cover of the Chekanov torus placed inside a Darboux ball. There are three Bott $S^1$-families of Reeb chords.}
\label{fig:Chekanov}
\end{figure}

\begin{figure}[h!]
\centering
\labellist
\pinlabel $x$ at 180 182
\pinlabel $x$ at 387 182
\pinlabel $y$ at 291 280
\pinlabel $y$ at 83 280
\pinlabel $x$ at 132 29
\pinlabel $x$ at 340 29
\pinlabel $y$ at 84 80
\pinlabel $y$ at 292 80
\endlabellist
\includegraphics[scale=0.8]{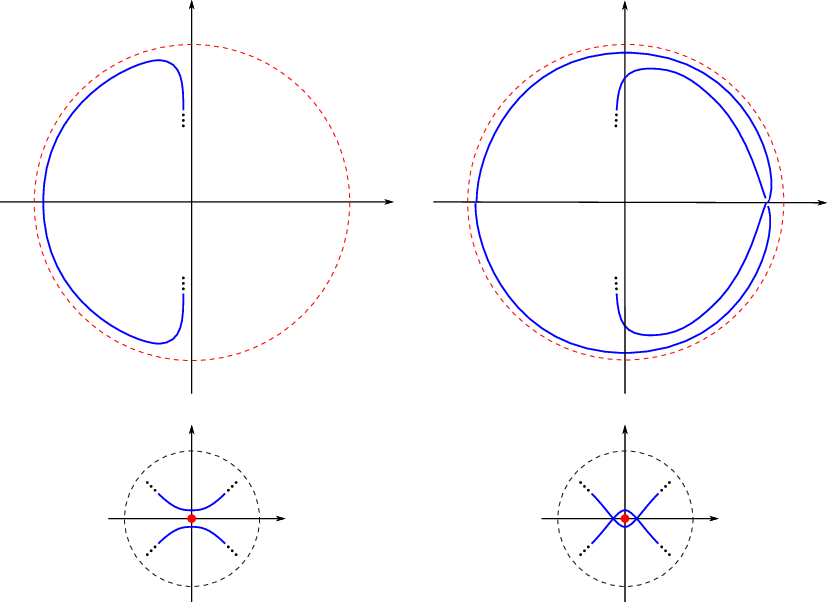}
\caption{Top left: the image under $f \colon \Sigma \to \dot{D}^2_{1/2}$ of a Lagrangian cylinder inside $\overline{\Sigma}.$ Top right: the cylinder after a Lagrangian regular homotopy that slides it over the embedded circle $S_{(1,1),1}^{(1,1)} \subset \overline{\Sigma}$ (which lives over the boundary of $\dot{D}^2_{1/2}$ as shown in red). The bottom pictures show the respective intersection of the Lagrangian in a fibre of the normal bundle $g\colon D_\epsilon^2 \times S_{(1,1),1}^{(1,1)} \to S_{(1,1),1}^{(1,1)}$ inside $\overline{\Sigma}$. {   We refer to Section \ref{sec:maincase} for the definition of the fibrations $f$ and $g$. In particular, recall that the intersection is invariant under multiplication with $-1$.}}
\label{fig:overinfinity}
\end{figure}

\begin{figure}[h!]
\vspace{3mm}
\centering
\labellist
\pinlabel $\frac{\pi}{3}$ at 80 310
\pinlabel $\frac{\pi}{6}$ at 63 353
\pinlabel $\frac{\pi}{6}-\epsilon$ at 288 356
\pinlabel $\frac{\pi}{3}+\epsilon$ at 299 306
\pinlabel $\frac{\pi}{10}$ at 305 90
\pinlabel $\frac{\pi}{10}$ at 337 100
\pinlabel $x$ at 402 296
\pinlabel $x$ at 402 87
\pinlabel $x$ at 194 87
\pinlabel $x$ at 194 296
\pinlabel $y$ at 297 403
\pinlabel $y$ at 297 193
\pinlabel $y$ at 89 403
\pinlabel $y$ at 89 193
\pinlabel $\color{blue}\gamma_{\OP{Cl}}$ at 318 137
\endlabellist
\includegraphics[scale=0.8]{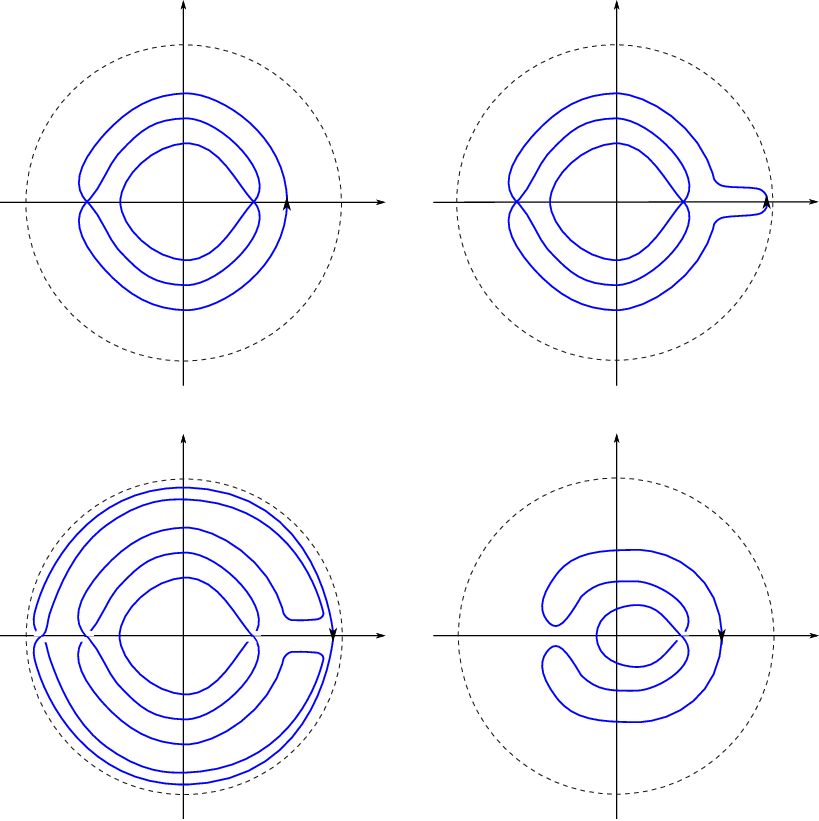}
\caption{The sequence of curves depicts the projection to $\dot{D}^2_{1/2}$ of a homotopy of Bohr--Sommerfeld immersions contained inside $\overline{\Sigma}\subset \CP^2$ under the fibration$f \colon \Sigma \to \dot{D}^2_{1/2}.$ The numbers denote approximate symplectic areas of the regions with respect to the symplectic form from Lemma \ref{lem:pullback}, for which $\dot{D}^2_{1/2}$ has total area $\pi/2.$   The embeddedness of the Legendrian lifts readily follows since an arc that connects two sheets at double point never bounds a region whose signed symplectic area takes value in $\pi\cdot\Z$ throughout the regular homotopy.  \color{black} }
\label{fig:homotopy-clifford}
\end{figure}

\begin{figure}[h!]
\vspace{3mm}
\centering
\labellist
\pinlabel $\frac{\pi}{3}$ at 103 320
\pinlabel $\frac{\pi}{6}$ at 50 307
\pinlabel $\frac{\pi}{10}$ at 309 71
\pinlabel $\frac{\pi}{10}$ at 332 92
\pinlabel $x$ at 402 296
\pinlabel $x$ at 402 87
\pinlabel $x$ at 194 87
\pinlabel $x$ at 194 296
\pinlabel $y$ at 297 403
\pinlabel $y$ at 297 193
\pinlabel $y$ at 89 403
\pinlabel $y$ at 89 193
\pinlabel $\color{blue}\gamma_{\OP{Ch}}$ at 320 126
\endlabellist
\includegraphics[scale=0.8]{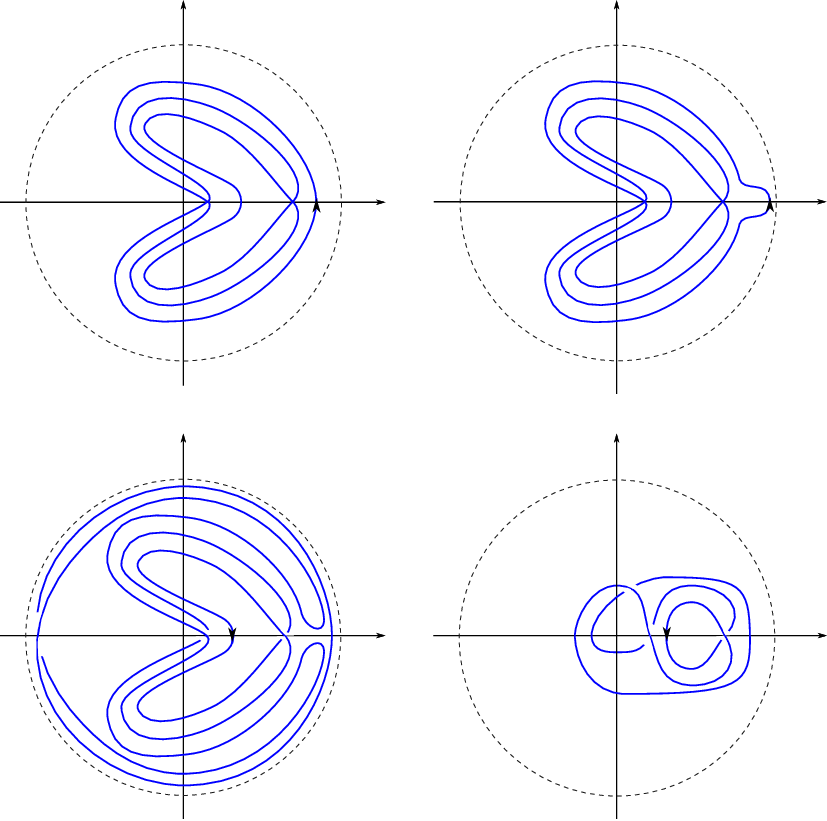}
\caption{A homotopy of Bohr--Sommerfeld immersions of tori inside $\overline{\Sigma}$ which starts at the canonical threefold Bohr--Sommerfeld cover of the Chekanov torus and ends at the Lagrangian projection of $\Lambda_{\OP{Ch}}.$ The curves depict the projections under the fibration $f \colon \Sigma \to \dot{D}^2_{1/2}.$   The embeddedness of the Legendrian lifts readily follows since an arc that connects two sheets at double point never bounds a region whose signed symplectic area takes value in $\pi\cdot\Z$ throughout the regular homotopy.}
\label{fig:homotopy-chekanov}
\end{figure}

\subsection{Proof of Theorem \ref{thm:frontsoflifts}}
\label{sec:frontsoflifts}

Both the Clifford and Chekanov tori live inside the  hypersurface $\overline{\Sigma} \subset \CP^2$ described in Subsection \ref{sec:maincase}  \color{black} , and they project to simple closed curves under the standard Lefschetz fibration $f \colon \Sigma \to \dot{D}^2_{1/2}$ described in the same; the former encircles the origin, while the latter does not. The sought Legendrian isotopies will be constructed inside the prequantisation bundle above the same hypersurface $\overline{\Sigma} \subset \CP^2$   by alluding to the techniques from Section \ref{sec:lefschetz} \color{black} . Here we describe the corresponding Lagrangian projections, which thus is a Lagrangian regular homotopy $\widetilde{L}_t$ of Bohr--Sommerfeld immersions inside the same. While doing this one must of course also take precaution so that no self-intersections arise in the Legendrian lifts.

Recall that a self-intersection arises precisely when the potential for the symplectic action form (this potential is a well-defined function with values in $\Z/\pi\Z$\blk) takes the same value at the two different sheets at a self-intersection of the Lagrangian immersion $\widetilde{L}_t$. In view of Lemma \ref{lem:pullback}, this translates to the property that the symplectic area enclosed by the sub-arc in $f(\widetilde{L}_t) \subset \dot{D}^2_{1/2}$ which starts and ends at the two sheets bounds a region whose signed symplectic area takes value in $\pi\cdot\Z$. \color{black}

Recall that any immersed Lagrangian torus inside $\Sigma$ projects to an immersed closed curve inside $\dot{D}^2_{1/2}$ under the fibration $f$ by Lemma \ref{lem:char}, and that conversely preimages of such curves are Lagrangian immersions. The sought regular homotopies are described in Figures \ref{fig:homotopy-clifford} and \ref{fig:homotopy-chekanov}, respectively, by regular homotopies of immersed curves inside $\dot{D}^2_{1/2},$ with one caveat: at one moment we must let our Lagrangian immersion pass through the intersection $S_{(1,1),1}^{(1,1)}$ of the line at infinity of $\CP^2$ and $\overline{\Sigma}.$ In that region $f$ is no longer suitable for describing the Lagrangian, and we instead use Corollary \ref{cor:symmetry}. We proceed with some more details.

We start on the top left in either of Figures \ref{fig:homotopy-clifford} or \ref{fig:homotopy-chekanov}. Here we see the threefold canonical Bohr--Sommerfeld covers of the respective monotone Lagrangian tori that have been generically perturbed inside the subset $\overline{\Sigma}$ through Lagrangian immersions.

Going from the top right projection to the bottom left projection we must let the Legendrian pass over the line at infinity of $\CP^2,$ i.e.~the Lagrangian intersects $S_{(1,1),1}^{(1,1)} \subset \overline{\Sigma}.$ The fact that the Lefschetz fibration $(z_1,z_2) \mapsto z_1 \cdot z_2$ defined on $\C^2$ obtains a singular fibre (a twofold branched cover of the line at infinity) when extended to $\CP^2$ makes this move slightly complicated. Instead of using $f$ we therefore pass to the description given in Corollary \ref{cor:symmetry} of a neighbourhood of the intersection $S_{(1,1),1}^{(1,1)}$ of the line at infinity and $\overline{\Sigma}$; see Figure \ref{fig:overinfinity}.

In the end we produce the Lagrangian projection living over the immersed curves $\gamma_{\OP{Ch}}$ and $\gamma_{\OP{Cl}}$ shown on the bottom right in Figures \ref{fig:homotopy-clifford} and \ref{fig:homotopy-chekanov}, respectively. Note that these curves bound signed symplectic area equal to zero for the symplectic form on $\dot{D}^2_{1/2}$ from Lemma \ref{lem:pullback}. Shrinking these curves are thus obviously a regular homotopy through exact, and hence Bohr--Sommerfeld, Lagrangian immersions inside $\C^2.$ Shrinking these curves sufficiently, the corresponding Legendrians end up inside a small contact Darboux ball.

We end by showing how to make the identification of the front-projections of the Legendrian tori in the Darboux balls constructed above, with the Legendrian tori which have the $S^1$-symmetric front projections depicted in Figures \ref{fig:Clifford} and \ref{fig:Chekanov}. Note that a priori it is clear that  the Legendrians produced here have front projections that satisfy an $S^1$-symmetry around the $z$-axis; this is due to the fact that the underlying Lagrangian immersions are invariant under the $S^1$-action $(z_1,z_2)\mapsto (e^{it}z_1,e^{-it}z_2)$.

First, consider the curves $\gamma_{\OP{Cl}}$ and $\gamma_{\OP{Ch}}$ inside $\dot{D}^2$ depicted in the bottom right of Figures \ref{fig:homotopy-clifford} and \ref{fig:homotopy-chekanov}, which correspond to the projections of the lift of the Clifford and Chekanov torus, respectively, isotoped into the Darboux ball. Suitable double covers of these curves admit lifts under the two-fold nontrivial cover $\dot{D}^2\to \dot{D}^2$ (i.e. the two-fold cover of $D^2$ branched at the origin) to curves $\tilde{\gamma}_{\OP{Cl}}$ and $\tilde{\gamma}_{\OP{Ch}}$ in $\dot{D}^2$ which are invariant under the involution of $D^2$ given by multiplication with $-1$; see Figure \ref{fig:liftedcurves}. Note that the winding number around the origin of $\gamma_{\OP{Cl}}$ (resp. $\gamma_{\OP{Ch}}$) is equal to 1 (resp. 0), which implies that the double cover which can be lifted to a closed curve which is invariant under multiplication by $-1$ must be the nontrivial cover (resp. the trivial cover).

 Then consider the front projection of the Legendrian knot in $\R^3$ which corresponds to the Legendrian lifts of the exact Lagrangian immersions $\tilde{\gamma}_{\OP{Cl}}$ and $\tilde{\gamma}_{\OP{Ch}}$ inside $\R^2$. Since these Lagrangian projections are invariant under multiplication with $-1$, it follows that the Legendrian knots given as their lifts have front projections that are invariant under the involution $x \mapsto -x$. The sought two-dimensional fronts of the Legendrian tori $\Lambda_{\OP{Cl}}$ and $\Lambda_{\OP{Ch}}$ can finally be obtained by performing a symmetric $S^1$-spin of the fronts of these knots around the $z$-axis.  \color{black}
\qed

\begin{figure}[h!]
\vspace{3mm}
\centering
\labellist
\pinlabel $x$ at 328 49
\pinlabel $x$ at 154 49
\pinlabel $\color{blue}\tilde{\gamma}_{\OP{Cl}}$ at 94 79
\pinlabel $\color{blue}\tilde{\gamma}_{\OP{Ch}}$ at 280 90
\pinlabel $y$ at 245 110
\pinlabel $y$ at 72 110
\endlabellist
\includegraphics[scale=1.0]{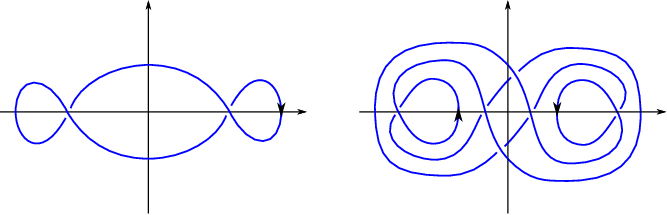}
\caption{The curves are two-fold covers of the curves $\gamma_{\OP{Cl}}$ and $\gamma_{\OP{Cl}}$ depicted in Figures \ref{fig:homotopy-clifford} and \ref{fig:homotopy-clifford} which have been lifted to $D^2$. These lifts are exact Lagrangian immersions which are invariant under scalar multiplication by $-1$, and their Legendrian lifts are thus invariant under the contactomorphism $(x,y,z) \mapsto (-x,-y,z).$}
\label{fig:liftedcurves}
\end{figure}

\section{Computations of the Chekanov--Eliashberg algebras}
\label{sec:dga}

For Legendrian surfaces inside the standard contact vector space $(\R^5,dz-ydx)$ the full Chekanov--Eliashberg algebra as constructed in \cite{LegendrianContactPxR} can readily be computed by using Ekholm's theory of gradient flow-trees \cite{MorseFlowTrees}. Here we compute this invariant for the Legendrian tori $\Lambda_{\OP{Cl}}$ and $\Lambda_{\OP{Ch}}$ when considered as Legendrian submanifolds inside a strict contact Darboux ball. The DGA inside the full $(S^5,\alpha_{\OP{st}})$ is of a more complicated form since it has infinitely many generators;   it is nevertheless quasi-isomorphic to the DGA computed in the Darboux ball, as was shown in~\cite{Refined}. \color{black}
\begin{remark}
Any Legendrian isotopy can be assumed to miss a generic point after a small perturbation. By \cite[Proposition 2.1.8]{Geiges} it may thus be assumed to actually be confined to the contact Darboux ball itself.  A priori, the Chekanov--Eliashberg algebra computed in a contact Darboux ball is thus an invariant of Legendrian isotopy inside the entire contact sphere as well   (i.e.~one does not need to rely on the aforementioned quasi-isomorphism for this). \color{black}
\end{remark}

We have found representatives for the Legendrians $\Lambda_{\OP{Cl}}$ and $\Lambda_{\OP{Ch}}$ that live in a Darboux ball $(\C^2 \times \R,dz-ydx)$ and which moreover satisfy a rotational symmetry around the $z$-axis. This is equivalent to the fact that they live inside $\Sigma \subset \C^2$ as considered in Section \ref{sec:lefschetz}, and can thus be described by their projections under $f \colon \Sigma \to \dot{D}^2_{1/2}.$ Their projections are the closed immersed curves $\gamma_{\OP{Cl}}$ and $\gamma_{\OP{Ch}} \subset \C^*$ shown at the bottom right in Figures \ref{fig:homotopy-clifford} and \ref{fig:homotopy-chekanov}, respectively. This symmetry will significantly facilitate the computations of their Chekanov--Eliashberg algebras.

\begin{remark}
\label{rem:spun}
In the case when the origin is contained in the unbounded component of $\C \setminus \gamma,$ the corresponding  $S^1$-symmetric Legendrian torus which projects to $\gamma$ is what usually is called the {\bf $S^1$-spun of a Legendrian knot}. More precisely, it is the $S^1$-spun of the Legendrian knot in $\R^3$ with Lagrangian projection given by the curve $\gamma \subset \C$;  \color{black}  see \cite{EkholmKalman} for the definition, where Ekholm--K\'{a}lm\'{a}n moreover give a description of the full Chekanov--Eliashberg algebra of such a Legendrian torus entirely in terms of the Chekanov--Eliashberg algebra of the knot alone. In other words, this means that the differential of the torus is determined by counts of polygons in $\C$ with boundary on $\gamma$ and precisely one positive puncture;  in particular, this uses Chekanov's formulation of the Chekanov--Eliashberg algebra \cite{DiffGradedAlgebraLegLinks}. \color{black}
\end{remark}

In the cases of interest here the origin of $\C$ is not contained in the unbounded component of $\C \setminus \gamma$ (i.e.~the complement of the projection of the Legendrian under the Lefschetz fibration $f$). Consequently, this case corresponds not to the aforementioned $S^1$-spun of a Legendrian knot as described in \cite{EkholmKalman}, but rather the generalisation to a so-called {\bf symmetric $S^1$-spun} of a Legendrian knot which (to the knowledge of the authors was) first appeared in \cite[Section 5.5.2]{CellularI}. Recall that the symmetric $S^1$-spun can be performed to any Legendrian knot which is invariant under the contactomorphism $(x,y,z) \mapsto (-x,-y,z)$; this is e.g.~the case here, as shown in Figure \ref{fig:liftedcurves}. Computing the full DGA in this case is  significantly  \color{black}  harder, but it can be done e.g.~using the cellular DGA by Rutherford--Sullivan \cite{CellularI,CellularII}. This was done by Li \cite[Proposition 6.2]{Li} in a few particular cases. While we manage to compute the full DGA of $\Lambda_{\OP{Cl}}$ here, we only perform a partial computation of the DGA of $\Lambda_{\OP{Ch}}$; we postpone the computation of its full DGA to future work.

\subsection{Generators and grading}
\label{sec:computing}

Let $\gamma \subset \C^*$ be a generic immersion of a closed curve which is the image of   (the Lagrangian projection of)  \color{black}  a Legendrian torus $\Lambda \subset (\C^2 \times \R,dz-ydx)$ under the above standard Lefschetz fibration $f \colon \C^2 \to \C$. The Legendrian condition translates to an exactness property of this curve (i.e.~that it bounds zero symplectic area)  with respect to the symplectic form on $\C^*$ described in Lemma \ref{lem:pullback}. \color{black}

The Chekanov--Eliashberg algebra of $\Lambda$ is the unital noncommutative DGA $(\mathcal{A}(\Lambda),\partial)$ freely generated by the Reeb chords of a generic perturbation of $\Lambda$ (in order to make all Reeb chords transverse) over the commutative group ring $\F[H_1(\Lambda)].$  The Chekanov--Eliashberg algebra always admits a well-defined grading in $\Z$
when the group ring coefficients are used. Recall that the coefficients themselves are graded by the value of the Maslov class on the corresponding cycle. In particular this means that reducing coefficients via the canonical algebra map $\F[H_1(\Lambda)]\to\F$ yields a Chekanov--Eliashberg algebra with coefficients in $\F$ which has well-defined grading only modulo the image of the Maslov class. \color{black}  \color{black}

\emph{A choice of basis of $H_1(\Lambda)$:} We begin by pin-pointing a $\Z$-basis $\langle \mu,\lambda \rangle =H_1(\Lambda).$ This gives an identification $\F[H_1(\Lambda)] \cong \F[\mu^{\pm 1},\lambda^{\pm 1}]$ with the ring of Laurent polynomials in two variables.

First we take $\mu \in H_1(\Lambda)$ that is represented by (a suitably  oriented \color{black} ) $S^1$-fibre of $\Lambda$ under the Lefschetz fibration;   in other words, $\mu$ is represented by a curve which is a simple orbit of the $S^1$-symmetry. A representative of $\mu$ can e.g.~be taken to be  \color{black}  parallel to one of the circular cusp edges in the front projection of $\Lambda.$ Then we take $\lambda \in H_1(\Lambda)$ to be represented by a simple closed curve inside $\Lambda$ which  projects under the Lefschetz fibration $f$ to the curve $\gamma \subset \C^*$ covered precisely once. Note that, even after fixing an orientation, the latter property determines the class $\lambda \in H_1(\Lambda)$ only up to the addition of a term of the form $k\cdot\mu \in H_1(\Lambda),$  $k \in \Z.$

Recall that the front projection of $\Lambda$ in $\R^2 \times \R$ satisfies a rotational $S^1$-symmetry around the $z$-axis; the singular locus thus consists of circular cusp edges together with double-cones whose cone points are situated on the $z$-axis. Any curve which represents $\lambda$ and projects to $\gamma$ thus satisfies the property that it intersects each circular cusp edge transversely exactly once. It is useful to make the representative of $\lambda$ even more well-behaved with respect to the front projection of $\Lambda$. Namely, we require that the representative of $\lambda$ to be a piecewise smooth embedding which is contained entirely inside the hyperplane $\{y=0\} \subset \R^2 \times \R$. This determines the class $\lambda$ uniquely \emph{except} for an ambiguity at each cone-point of the front; when traversing a double-cone there are two possible choice one can make, with different choices resulting in the addition of a term $\pm\mu$ to the class $\lambda$. \color{black}

\emph{The Reeb chords:} Closed Legendrians that satisfy the $S^1$-symmetry considered here never have transverse Reeb chords; the preimage of a double point in the base $\C$ of the Lefschetz fibration is always a \emph{circle} of Reeb chords. In the case when the intersection point of the projection is a transverse double point, the circle of Reeb chords is still generic in the Bott sense. For a generic perturbation constructed by a Morse function on the Bott manifold (i.e.~$S^1$) having precisely two critical points, each Bott manifold gives rise to precisely two Reeb chords $x$ and $\hat{x}$ corresponding to the minimum and maximum of the Morse function, respectively. The grading of these generators are then $|\hat{x}|=|x|+1$ and $|x|=\OP{CZ}(x)-1,$ where $\OP{CZ}(x)$ moreover coincides with the Bott version of the Conley--Zehnder index for the corresponding $S^1$-family of Reeb chords. For a small generic perturbation we thus get precisely two Reeb chords $x$ and $\hat{x}$ for any double point of $\gamma.$ Recall that the gradings of the Novikov parameters $\mu,\lambda$ are given by their respective Maslov indices.

The following result allows us to compute the gradings entirely in terms of data on the projection $\gamma \subset \C.$
\begin{lem}
  \begin{enumerate}
  \item The Maslov index of the Novikov parameter \color{black} $\mu$ vanishes, while the Maslov index of $\lambda$ is equal to twice the tangent winding number $\tau \in \Z$ of the closed curve $\gamma$  (i.e.~the winding number of the tangent map $\dot{\gamma} \in \R^2 \setminus \{0\}$ with respect to the origin) \color{black} .
  \item The Conley--Zehnder index $\OP{CZ}(x)$ is equal to the Conley--Zehnder index of the corresponding transverse self-intersection of $\gamma \subset \C$ as defined for immersed curves with transverse double-points in \cite{DiffGradedAlgebraLegLinks}.
  \end{enumerate}
\end{lem}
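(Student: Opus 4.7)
The plan is to compute both indices directly via the holomorphic volume form $\Omega=dz_1\wedge dz_2$ on $\C^2$ together with the explicit $S^1$-symmetry of $\Lambda$. Recall that, for an oriented Lagrangian $L\subset\C^2$, the Maslov class evaluated on a loop $\sigma\subset L$ equals the winding number of $\arg(\Omega(v_1,v_2)^2)$ around $0$, where $(v_1,v_2)$ is any positively oriented tangent frame of $L$ along $\sigma$. This follows because $\Omega$ represents (the dual of) the determinant map, and the $\det^2$ formulation of the Maslov class is insensitive to the choice of real basis.

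\textbf{Part (1).} For the class $\mu$, a representative is an orbit of the $S^1$-action $(z_1,z_2)\mapsto(e^{it}z_1,e^{-it}z_2)$, which takes values in $SU(2)$ and thus preserves $\Omega$. Choosing an $S^1$-equivariant frame along $\mu$, the value of $\Omega$ is constant along the orbit, so $\mu_\Lambda(\mu)=0$. For the class $\lambda$, pick a representative that projects to a single traversal of $\gamma$ under $f$. At a point $(z_1(t),z_2(t))$ with $z_1(t)z_2(t)=\gamma(t)$, take the frame $v_1=\dot\lambda(t)=(z_1'(t),z_2'(t))$ together with $v_2=(iz_1,-iz_2)$ (the infinitesimal generator of the fiberwise $S^1$-action). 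A direct computation gives
\begin{equation*}
\Omega(v_1,v_2)=z_1'\cdot(-iz_2)-z_2'\cdot(iz_1)=-i(z_1z_2)'=-i\gamma'(t),
\end{equation*}
so $\arg\Omega^2$ winds $2\tau$ times as $\gamma$ is traversed once, giving $\mu_\Lambda(\lambda)=2\tau$. The ambiguity in the choice of representative for $\lambda$ at each cone point differs by multiples of $\mu$, which is harmless since $\mu_\Lambda(\mu)=0$.

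\textbf{Part (2).} At a Bott $S^1$-family of Reeb chords over a transverse double point $q\in\C$ of $\gamma$, the two sheets of $\Lambda$ meeting any point $p$ of the family share the fiber direction $\xi=\R(iz_1,-iz_2)\subset T_p\C^2$ by $S^1$-symmetry; hence both tangent Lagrangian planes $V_\pm$ contain $\xi$. Performing symplectic reduction by $\xi$, the quotient $\xi^\omega/\xi$ is canonically identified with $T_q\C\cong\C$, and the reduced Lagrangian lines $V_\pm/\xi$ are exactly the tangent lines to $\gamma$ at the two sheets of its self-intersection at $q$. This is precisely the data entering Chekanov's combinatorial definition of the Conley--Zehnder index in \cite{DiffGradedAlgebraLegLinks}. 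Because the Bott direction contributes trivially to the Bott CZ index, the Bott CZ index of the $S^1$-family on $\Lambda$ equals the Chekanov index of the corresponding self-intersection of $\gamma$.

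\textbf{Main obstacle.} The most delicate point is Part (2): verifying that the normalisation of the Bott Conley--Zehnder index used in \cite{LegendrianContactPxR}, under the convention $|x|=\OP{CZ}(x)-1$, lines up with Chekanov's self-intersection index without introducing any off-by-one shift when one reduces by the common direction $\xi$. Once the identification $\xi^\omega/\xi\cong T_q\C$ is set up together with the correct orientation conventions, this becomes a direct verification, while the Maslov computation of Part~(1) is straightforward given the $SU(2)$-invariance of $\Omega$.
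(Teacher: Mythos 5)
Your argument is correct, and for part (1) it takes a genuinely different route from the paper. The paper computes $\mu_\Lambda(\lambda)$ by comparing two symplectic trivialisations over $\Sigma\cong$ a neighbourhood of $0_{\T^2}\subset T^*\T^2$: relative to the cotangent-fibre Maslov cycle the index is the signed count $r$ of radial tangencies of $\gamma$, passing to the trivialisation of $\C^2$ adds $2w$ ($w$ the winding number of $\gamma$ about the origin), and $r+2w$ is identified with twice the tangent winding number. You instead evaluate the holomorphic volume form $\Omega=dz_1\wedge dz_2$ on the explicit frame $\bigl(\dot\lambda,(iz_1,-iz_2)\bigr)$ and find $\Omega(v_1,v_2)=-i\gamma'(t)$ in one line; combined with the $SU(2)$-invariance of $\Omega$ for the fibre class $\mu$, this gives both indices directly and avoids the $T^*\T^2$ detour entirely. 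Your computation checks out (e.g.\ for the circle encircling the origin it returns $2\tau=2$, matching the Maslov index $2$ of $S^1\times\{\pt\}$ in $\C^2$), and the observation that the cone-point ambiguity $\lambda\mapsto\lambda\pm\mu$ is harmless because $\mu_\Lambda(\mu)=0$ is a nice touch the paper leaves implicit.

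For part (2) both you and the paper give only a sketch: the paper defers entirely to the Ekholm--K\'alm\'an computation for $S^1$-spuns, while you exhibit the underlying mechanism (symplectic reduction by the common fibre direction $\xi$, identifying the reduced Lagrangian lines with the tangent lines of $\gamma$ at the crossing). Be aware, though, that the one assertion you leave unproved --- ``the Bott direction contributes trivially to the Bott CZ index'' --- is precisely the content of the cited computation, and it requires carrying the reduction along the \emph{entire} capping path, not just at the double point: the Conley--Zehnder index is the Maslov index of the closed-up path of tangent planes along the capping path, and the $\C\xi$-summand traces out an open path of lines whose contribution must be shown to vanish (or cancel against the closing-up convention). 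This does not follow formally from $\mu_\Lambda(\mu)=0$, which concerns the closed fibre loop. So your part (2) is at the same level of completeness as the paper's, with the normalisation check you flag being exactly the step that the reference to \cite{EkholmKalman} is meant to supply.
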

\begin{proof}

 (1): Recall that
$$(\C^2 \setminus \{z_1z_2=0\},\omega_0) \supset \Sigma=f^{-1}(\dot{D}^2)$$
is symplectomorphic to a neighbourhood of the zero section of $(T^*\T^2,d(p\,dq))$, by a symplectomorphism which takes the Lagrangian Clifford torus $S^1 \times S^1 \subset \C^2$ to the zero section $0_{\T^2} \subset T^*\T^2$. There Maslov index in this non-simply connected symplectic domain can be defined by the following two \emph{different} natural symplectic trivialisations: the trivialisation of $\C^2$ and the canonical trivialisation of $T^*\T^2$. When computed in $(\C^2,\omega_0)$, each of $S^1 \times \{\pt\}$ and $\{\pt\} \times S^1$ are cycles of Maslov index \emph{two}, while they both have Maslov index \emph{zero} when computed in $T^*\T^2$. Note that the Maslov cycle used to define the Maslov class when $T^*\T^2$ is endowed with its canonical trivialisation is the Lagrangian distribution of cotangent fibres (i.e.~the Maslov index is defined by intersections of the Lagrangian Gauss map with the vertical tangent planes).

The Maslov class of a Lagrangian $L \subset \Sigma \subset \C^2$ computed relative the latter Maslov cycle amounts to taking the signed count $r$ of radial tangencies of the projection $\gamma \subset \C^*$ of the Lagrangian under $f \to \dot{D}^2$. If one instead wants the Maslov class induced by the trivialisation of $\C^2$ one has to add $2w$, where $w$ is the winding number of $\gamma$ around the origin. Since the expression $r+2w$ readily can be seen to be the tangent winding number of $\gamma$, this proves the claim. \color{black}

(2): In view of the formula for the Maslov index in (1), the degrees can be computed analogously to the degrees of the Reeb chords on an $S^1$-spun of a Legendrian knot in terms of the degrees of the Reeb chords of the knot itself; see \cite{EkholmKalman} for this computation.
\end{proof}

\begin{figure}[htp]
\vspace{3mm}
\centering
\labellist
\pinlabel $x_1$ at 196 45
\pinlabel $x_2$ at 143 97
\pinlabel $x_1$ at 91 45
\pinlabel $x_2$ at 38 97
\endlabellist
\includegraphics[scale=1.5]{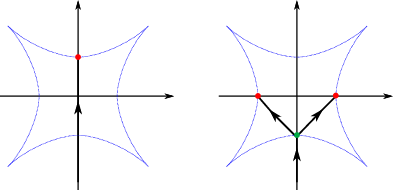}
\vspace{3mm}
\caption{The blue curve depicts the caustic of the front projection of a generic resolution of the front cone that involves four swallow-tail singularities. We also depict the two possibilities for a rigid partial gradient flow-tree that involves a single front-cone; the tree on the left has a single vertex being (an edge-vertex at the cusp edge), while the tree on the right has three vertices (one three-valent vertex of type $Y_1$ and two edges-vertices at two cusp-edges).}
\label{fig:coneres}
\end{figure}

\subsection{Computations for $\Lambda_{\OP{Cl}}$}
\label{sec:comp-cliff}

The Chekanov--Eliashberg algebra $(\mathcal{A}(\Lambda_{\OP{Cl}}),\partial)$ of the Legendrian torus $\Lambda_{\OP{Cl}} \subset (\R^5, \alpha_{st})$ was computed by the first author in \cite{KnottedLegendrianSurface}. Here we redo this computation in the present algebraic setting (in which the Novikov coefficients $\C[H_1(\Lambda_{\OP{Cl}})]$ do not commute  with the Reeb chord generators).

The $S^1$-symmetric version of $\Lambda_{\OP{Cl}}$ has a single $S^1$-Bott manifold of Reeb chords that correspond to the unique double point of the curve shown at bottom right in Figure \ref{fig:homotopy-clifford}. A small generic perturbation of the $S^1$-symmetric front of $\Lambda_{\OP{Cl}}$ produces two generic chords $a$ and $\hat{a},$ where we compute $|\hat{a}|=|a|+1$ and $|a|=1.$   Recall that the Novikov parameters have grading $|\mu|=|\lambda|=0$ (the Maslov class of $\Lambda_{\OP{Cl}}$ vanishes). \color{black}
\begin{thm}
\label{thm:dgacliff}
  For the spin structure on $\Lambda$ given by the trivialisation of $T\Lambda$ which is induced by the Lie group structure on $\Lambda$, and for  \color{black}  suitable choices of capping paths and basis $\langle \mu,\lambda \rangle = H_1(\Lambda)$ we have
  \begin{align*}
& \partial a=\partial a = 1+\lambda(1+\mu),\\
& \partial \hat{a}=a-\mu a \mu^{-1}.
  \end{align*}
In particular, the augmentation variety of $(\mathcal{A}(\Lambda_{\OP{Cl}}),\partial)$ is equal to the one-dimensional complex pair of pants
$$\OP{Sp}(\C[\mu^{\pm1},\lambda^{\pm1}]/\langle 1+\lambda(1+\mu)\rangle)$$
and for two augmentations $\varepsilon_0, \varepsilon_1 \colon H_0(\mathcal{A}) \to \C$ the bilinearised Legendrian contact homology group satisfies
$$ LCH_*^{\varepsilon_0,\varepsilon_1}(\Lambda_{\OP{Cl}}) = \begin{cases} H_{*-1}(S^1;\C), & \varepsilon_0=\varepsilon_1, \\ 0, & \varepsilon_0 \neq \varepsilon_1.
\end{cases}$$
\end{thm}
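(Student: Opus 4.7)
\medskip
\noindent\textbf{Proof proposal.}
The plan is to leverage the $S^1$-symmetry together with the dimensional reduction to the Lefschetz fibration $f \colon \Sigma \to \dot D^2_{1/2}$ in order to reduce the count of rigid pseudoholomorphic discs (or, equivalently, gradient flow trees) to a count of immersed polygons in $\C^*$ with boundary on the figure-eight curve $\gamma_{\OP{Cl}}$. This reduction is exactly analogous to the one used by Ekholm--K\'{a}lm\'{a}n for the $S^1$-spun, but with the added complication that $\Lambda_{\OP{Cl}}$ is a symmetric spun; this is the setting where the computation of \cite{KnottedLegendrianSurface} takes place, so I will reinterpret it in the present fully noncommutative setup.

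First I would record the generators. The curve $\gamma_{\OP{Cl}}$ has a single transverse double point, producing a single Bott $S^1$-family of Reeb chords. After a standard Morse-perturbation using a Morse function on $S^1$ with two critical points, this gives rise to exactly two transverse Reeb chords $a$ and $\hat a$ with $|a|=1$ and $|\hat a|=2$, and the gradings of $\mu,\lambda$ are zero since $\Lambda_{\OP{Cl}}$ has vanishing Maslov class. I would then enumerate the rigid gradient flow trees with positive puncture at $a$: these project under $f$ to the immersed polygons with a single positive puncture and boundary on $\gamma_{\OP{Cl}}$, together with a contribution coming from the critical fibre $f^{-1}(0)$ passing through the bounded region of $\gamma_{\OP{Cl}}$. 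The three contributing polygons are the constant disc (contributing the unit $1$), the lune bounded by the two arcs of $\gamma_{\OP{Cl}}$ with the origin on one side (contributing $\lambda$ for an appropriate choice of capping paths), and the lune with the origin on the other side (contributing $\lambda\mu$, with the factor of $\mu$ arising from the cone passing through the critical fibre, as in \cite[Section 5.5.2]{CellularI}). Fixing the signs by means of the Lie-group spin structure (as done in \cite{KnottedLegendrianSurface}) gives $\partial a = 1 + \lambda + \lambda\mu = 1+\lambda(1+\mu)$.

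Next, the differential of $\hat a$ is dictated by the standard local model for a Bott $S^1$-family of Reeb chords that has been Morse-perturbed; compare \cite{LegendrianContactPxR}. The two rigid flow trees with positive puncture at $\hat a$ are the two half-gradient trajectories connecting $\hat a$ to $a$ along the Bott circle. In the commutative setting these two trees cancel, but in the fully noncommutative setting the capping path on each side of the Bott orbit picks up one loop going around the $\mu$-generator, giving the two contributions $a$ and $\mu a \mu^{-1}$ with opposite signs; hence $\partial \hat a = a - \mu a\mu^{-1}$. A direct check then shows $\partial^2=0$ using $\partial(1+\lambda(1+\mu)) = 0$ and the fact that $\mu,\lambda$ are cycles.

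Finally, the augmentation variety and the bilinearised contact homology follow algebraically. A graded augmentation $\varepsilon$ must send $\partial\hat a$ to zero, which imposes no condition on $\C$, and send $\partial a$ to zero, yielding precisely the equation $1+\varepsilon(\lambda)(1+\varepsilon(\mu))=0$; this cuts out the pair of pants $\OP{Sp}(\C[\mu^{\pm1},\lambda^{\pm1}]/\langle 1+\lambda(1+\mu)\rangle)$. For two augmentations $\varepsilon_0,\varepsilon_1$, the bilinearised differential on the two-dimensional $\C$-vector space $\C\langle a,\hat a\rangle$ vanishes on $a$ and sends $\hat a$ to $(1-\varepsilon_0(\mu)\varepsilon_1(\mu)^{-1})\,a$. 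When $\varepsilon_0=\varepsilon_1$ this coefficient is zero, so $LCH^{\varepsilon_0,\varepsilon_1}_* \cong H_{*-1}(S^1;\C)$; when $\varepsilon_0\neq\varepsilon_1$ the defining equation of the augmentation variety forces $\varepsilon_0(\mu)\neq\varepsilon_1(\mu)$ (since $\varepsilon(\mu)$ determines $\varepsilon(\lambda)$), hence the coefficient is a nonzero scalar and $LCH^{\varepsilon_0,\varepsilon_1}_*=0$.

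\emph{Main obstacle.} The technical heart of the argument is the enumeration of the rigid flow trees and, crucially, the correct tracking of the capping-path holonomies in the noncommutative coefficient ring $\F[H_1(\Lambda_{\OP{Cl}})]$. The key subtlety is the contribution of the cone-singularity of the front over the critical fibre of $f$ in the symmetric spun picture, which is what produces the $\mu$-factor in the second lune and, simultaneously, the non-vanishing commutator $a-\mu a\mu^{-1}$ in $\partial\hat a$. Identifying these contributions with the correct powers of $\mu$ requires an appropriate choice of capping paths and basis for $H_1(\Lambda_{\OP{Cl}})$, and this is where matching conventions with \cite{KnottedLegendrianSurface} is essential.
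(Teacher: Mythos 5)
Your overall route is the same as the paper's: reduce to a count of rigid flow trees via the $S^1$-symmetric picture over the Lefschetz fibration, recover the three discs contributing to $\partial a$ from the computation in \cite{KnottedLegendrianSurface}, obtain $\partial\hat a$ from the Morse perturbation of the Bott circle of chords with noncommutative coefficients, and then read off the augmentation variety and the bilinearised homology algebraically. Your last two paragraphs essentially coincide with the paper's argument: $\partial^{\varepsilon_0,\varepsilon_1}(\hat a)=(1-\varepsilon_0(\mu)\varepsilon_1(\mu)^{-1})\,a$, and the relation $\lambda^{-1}=-(1+\mu)$ forces $\varepsilon_0(\mu)\neq\varepsilon_1(\mu)$ for distinct augmentations.

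The genuine gap is the determination of the signs for the Lie-group spin structure, which you dispose of with ``fixing the signs by means of the Lie-group spin structure (as done in \cite{KnottedLegendrianSurface}).'' The paper itself notes that the computation in \cite{KnottedLegendrianSurface} was carried out for an \emph{unspecified} spin structure, so that reference cannot tell you whether the Lie-group spin structure yields $1+\lambda(1+\mu)$ or, say, $1+\lambda(1-\mu)$ --- and the difference matters, since it changes the augmentation variety as a subvariety of $(\C^*)^2$ (which is exactly what enters the superpotential comparison later in the paper). The paper closes this gap in two steps that your proposal is missing: first, the discs contributing $1$ and $\lambda$ carry the same sign, shown by comparison with the $S^1$-spun unknot, where the two corresponding discs bound a one-dimensional moduli space on the solid-torus filling (giving $\partial a'=1-\lambda$ for the spin structure that extends over the filling, and the relative sign flips when one switches to the Lie-group spin structure along $\lambda$); second, the disc contributing $\lambda\mu$ carries the same sign as well, by contradiction: otherwise, changing the spin structure along $\mu$ would produce a DGA admitting an augmentation over $\Z_3$ (via $\mu,\lambda\mapsto 1$), and since that modified spin structure extends over the exact Lagrangian cobordism to the loose sphere constructed in Section \ref{sec:subflex}, the augmentation would pull back to the loose sphere --- impossible by Proposition \ref{sub-looseacyclic}. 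A smaller inaccuracy in your enumeration: there is no ``constant disc'' contributing the unit. The term $1$ comes from the honest teardrop of $\gamma_{\OP{Cl}}$ not containing the origin, while the terms $\lambda$ and $\lambda\mu$ both come from the single teardrop containing the origin, distinguished by the two resolutions of the front-cone vertex shown in Figure \ref{fig:coneres}, not by two different lunes.
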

\begin{proof}
In \cite{KnottedLegendrianSurface} the differential
$$ \partial a = 1+\lambda(1+\mu)$$
was computed for some suitable (but unspecified) choice of spin
structure (and choice of sign of the generator ``$a$''), where the
terms are in bijection with the rigid pseudoholomorphic discs with
exactly one puncture.  The term ``1'' corresponds to the rightmost
immersed teardrop bounded by $\gamma_{\OP{Cl}}$ (c.f.~Figure
\ref{fig:homotopy-clifford}), i.e.~which does not contain the
origin. The capping paths of $a$ are chosen to be the two curves in
$\Lambda$ that correspond to the edge of this flow-tree, and the
base point is thus located precisely at the cusp-edge. The remaining
two terms both correspond to the immersed teardrop to the left
bounded by $\gamma_{\OP{Cl}}$, i.e.~that pass through the origin.
After generically resolving the front-cone singularity as described
in \cite{KnotContHom} or \cite{KnottedLegendrianSurface}, these
flow-trees flow towards the cone-point where they then proceed in
either of the two manners shown in Figure \ref{fig:coneres}.

In order to show that the signs of all three discs are the same for the Lie group spin structure we argue as follows.  First recall that the sign of a pseudoholomorphic disc is induced by a so-called coherent orientation of the moduli space. This coherent orientation, in turn, depends on the choice of spin structure of the Legendrian. The sign of the disc depends on this spin structure in the following precise manner. Assume that we have fixed choices of capping paths of the Reeb chords. Adjoining suitable such capping paths at the punctures of the disc closes it up into a closed curve with boundary on the Legendrian. If the spin structure along this closed curve is changed (while keeping the same orientation), then the coherent orientation (i.e.~the sign obtained when counting the disc) is changed; see \cite[Section 4.4]{Ekholm:Orientations} for more details. \color{black}

First, we claim that the discs counted by $\partial a$  that contribute to the term $\lambda$, \color{black}  i.e.~which arises from the resolution shown on the left in Figure \ref{fig:coneres}, and the disc that corresponds to the term ``$1$'' both are counted with the same sign; hence these signs can both be taken to be $+1$ without loss of generality. The claim follows by the computation of DGA for the $S^1$-spun of the standard unknot with a single Reeb chord, since the two aforementioned discs can be identified with the two discs that contribute to $\partial a'=1-\lambda$ for the unique generator $a'$ in degree one and the spin structure that extends over the solid torus in which $\lambda$ is a longitude (this is different from the Lie group spin structure along $\lambda$). To see the sign in the case of the $S^1$-spun of the unknot, note that the latter spin structure extends to the standard exact Lagrangian filling of the aforementioned $S^1$-spun by a solid torus in the symplectisation, in which the two discs become the endpoints of a one-dimensional moduli space of discs with boundary on the filling.

Second, we claim that the disc counted by $\partial a$ that corresponds to the term $\lambda\mu$ also is counted with the same sign as the previous discs. If not,  after \emph{changing} the Lie group spin structure precisely along the cycle $\mu$, the differential would become $\partial a=1+\lambda(1+\mu)$ for this new spin structure, and thus $\partial(a)=0$ when reducing the coefficients to $\Z_3$ by $\mu,\lambda\mapsto 1$. Note that such a change applied to the Lie group spin structure produces the spin structure that extends to the exact Lagrangian cobordism to the loose unknot constructed in Section \ref{sec:subflex} (see Figure \ref{fig:loosesphere}).  Since the spin structure extends, the augmentation with $\Z_3$-coefficients pulls back to an augmentation of the loose sphere, which is a contradiction (the loose sphere has an acyclic DGA for any choice of coefficients, and thus does not admit augmentations with any choice of coefficients); see~Proposition \ref{sub-looseacyclic}.

It should be noted that the argument above only pin-points the signs of the discs relative to each other. However, as far as the isomorphism class of the DGA is concerned, this is sufficient. Namely, to obtain a global change of signs, it suffices to make a change of variables $a \mapsto -a$ in the DGA.
 \color{black}

In the setting of the ``fully noncommutative Legendrian DGA'' considered here we also compute
$$ \partial \hat{a}=a-\mu a \mu^{-1},$$
where the two terms correspond to the two flow-lines in the Bott $S^1$-manifold of Reeb chords that appear after the generic perturbation. (The signs of the two terms can be determined by $\partial^2\hat{a}=0,$ after a choice of sign of the generator $\hat{a}.$) Here we have taken the capping paths of $\hat{a}$ to follow the $S^1$-manifolds of Reeb chords (before the Morse perturbation) to the corresponding endpoint of the chord $a,$ concatenated with the corresponding capping path for the chord $a.$

For the computation of the bilinearised Legendrian contact homology, we can either allude to Theorem \ref{thm:lchgeneral} below or perform the following computation by hand.

For the explicit computation, we first note that the bilinearised differential satisfies
$$\partial^{\varepsilon_0,\varepsilon_1}(\hat{a})=(1-\varepsilon_0(\mu)\varepsilon_1(\mu)^{-1})a.$$
Finally, two augmentations are the same if and only if they take the same value on the variable $\mu \in \C[H_1(\Lambda_{\OP{Ch}})],$ as follows from the relation $\lambda^{-1}=-(1+\mu).$
\end{proof}
In other words, the homology groups $LCH_*$ behave like the $\OP{Ext}$ groups for different skyscraper sheaves on the augmentation variety. This should be compared to Nadler's computation \cite{Nadler} based upon the technique of sheaves with microsupport on the Lagrangian cone over $\Lambda_{\OP{Cl}} \subset \partial   B^6.$

\subsection{Computations for $\Lambda_{\OP{Ch}}$}
\label{sec:comp-ch}
Here we compute the homology of the Chekanov--Eliashberg algebra $(\mathcal{A}(\Lambda_{\OP{Ch}}),\partial)$ of $\Lambda_{\OP{Ch}} \subset (\R^5, \alpha_{st})$ in degree zero. This is sufficient for computing its augmentation variety, and to distinguish it from the Legendrian torus $\Lambda_{\OP{Cl}}$ up to Legendrian isotopy. The computation of the full DGA is postponed to future studies. For the bilinearised Legendrian contact homologies, we refer to the general computation in Section \ref{sec:lchgeneral}.

The $S^1$-symmetric version of $\Lambda_{\OP{Ch}}$ has the three $S^1$-Bott manifolds of Reeb chords corresponding to the double point shown in the bottom right of  Figure \ref{fig:homotopy-chekanov} \color{black} . The Bott indices are $|a|=1$, $|c|=3,$ and $|b|=2,$ for the rightmost, middle, and leftmost double points, respectively. A small generic perturbation thus gives us a Legendrian with six Reeb chords $a,\hat{a},$ $c,\hat{c},$ $b,\hat{b},$ with
$$ |a|=|\hat{a}|-1=1, \:\:\: |c|=|\hat{c}|-1=3, \:\:\: \text{and} \:\:\: |b|=|\hat{b}|-1=2.$$  Again recall that the Novikov parameters are graded by $|\mu|=|\lambda|=0$. \color{black}
\begin{thm}
  For the   spin structure on $\Lambda$ given by the trivialisation of $T\Lambda$ which is induced by the Lie group structure on $\Lambda$, and for  \color{black}  suitable choices of capping paths and basis $\langle \mu,\lambda \rangle = H_1(\Lambda)$ we have
$$ \partial a=1+\lambda(1+\mu)^2,\:\:\partial\hat{a}=a-\mu a \mu^{-1},$$
and in particular $H_0(\mathcal{A},\partial)=\C[\mu^{\pm1},\lambda^{\pm1}]/\langle 1+\lambda(1+\mu)^2\rangle.$
\end{thm}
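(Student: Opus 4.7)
The plan is to mimic the proof of Theorem \ref{thm:dgacliff}, using the $S^1$-symmetry of $\Lambda_{\OP{Ch}}$ together with Ekholm's gradient flow tree theory \cite{MorseFlowTrees} to reduce the count of rigid pseudoholomorphic discs with a single positive puncture at $a$ to a combinatorial enumeration involving $\gamma_{\OP{Ch}} \subset \C^*$. The generic perturbation of the three $S^1$-Bott manifolds produces the six Reeb chord generators described, and since $a$ is the unique generator in degree one it suffices to compute $\partial a$ and $\partial\hat{a}$ in order to read off $H_0$.

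For the first summand: there is a small immersed teardrop bounded by $\gamma_{\OP{Ch}}$ whose convex corner is at the double point corresponding to $a$ and which is contained in the bounded component of $\C^* \setminus \gamma_{\OP{Ch}}$ not containing the origin. This teardrop contributes the constant term $1$ to $\partial a$, with capping paths and base point chosen precisely as in the Clifford case. For the summand $\lambda(1+\mu)^2$: the larger teardrop bounded by $\gamma_{\OP{Ch}}$ whose image contains the origin gives the only other contribution. After the generic resolution of the front-cone singularities of $\Lambda_{\OP{Ch}}$ as in Figure \ref{fig:coneres}, and taking into account the structure of the symmetric $S^1$-spun of the Legendrian link whose Lagrangian projection is $\tilde{\gamma}_{\OP{Ch}}$ (see Figure \ref{fig:liftedcurves}), this teardrop is to be shown to lift to exactly four rigid partial gradient flow trees whose homology classes are $\lambda,\lambda\mu,\lambda\mu,\lambda\mu^2$, respectively, summing to $\lambda(1+\mu)^2$.

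For the signs I would repeat the loose-sphere bootstrap from the proof of Theorem \ref{thm:dgacliff}. A comparison with the $S^1$-spun of the standard unknot fixes the sign of the $\lambda$ summand relative to the $1$ summand; the remaining three signs within $\lambda(1+\mu)^2$ are then pinned down by observing that a sign flip along $\mu$ would correspond to changing the Lie group spin structure to one which extends across the exact Lagrangian cobordism to the loose Legendrian sphere constructed in Section \ref{sec:subflex}. Reducing coefficients to $\Z_3$ via $\mu,\lambda \mapsto 1$ would then yield an augmentation that pulls back to the loose sphere, contradicting Proposition \ref{sub-looseacyclic}. The identity $\partial\hat{a}=a-\mu a \mu^{-1}$ follows verbatim from the Clifford computation: it is produced by the Morse perturbation of the $S^1$-Bott manifold at $a$, with capping paths chosen so that $\hat{a}$ connects the endpoints of $a$ along the Bott circle, and the signs are forced by $\partial^2 \hat{a}=0$.

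With these differentials in hand, $H_0(\mathcal{A},\partial)$ is immediate. The degree-zero part of $\mathcal{A}$ coincides with the Novikov ring $\C[\mu^{\pm1},\lambda^{\pm1}]$. Since $a$ is the unique Reeb chord generator in degree one and the remaining chord generators $\hat{a},b,\hat{b},c,\hat{c}$ all have degree at least two, the image of $\partial$ in degree zero is precisely the two-sided ideal generated by $\partial a = 1+\lambda(1+\mu)^2$, which yields the claimed quotient. The main obstacle I expect is the combinatorial one in paragraph two: verifying that the teardrop through the origin contributes exactly four rigid flow trees in the homology classes listed, which requires a careful analysis of how the symmetric $S^1$-spun interacts with the cone singularities of the front, and in particular explaining the appearance of the \emph{squared} factor $(1+\mu)^2$ in the Chekanov case as opposed to the single factor $(1+\mu)$ in the Clifford case.
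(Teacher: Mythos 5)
Your overall strategy is the paper's: use the $S^1$-symmetry and Ekholm's gradient flow trees to locate one rigid tree away from the symmetry axis (contributing the term $1$) and a family of trees passing through the two front cones (contributing $\lambda,\lambda\mu,\lambda\mu,\lambda\mu^2$), then fix signs by a loose-sphere bootstrap, and finally read off $H_0$ from the fact that $a$ is the unique degree-one generator. The step you flag as ``to be shown'' is, however, exactly where the paper does real work: one must first rule out flow trees having an edge that lies on two sheets coming from \emph{different} cones (the paper does this by an action estimate -- the $z$-gap between such sheets exceeds the length of $a$), after which there is precisely \emph{one} rigid partial flow tree with a cone-vertex at each of the two cone points (Figure \ref{fig:chekanov-disk2}), and each cone-vertex admits exactly the two resolutions of Figure \ref{fig:coneres}, giving the $2\times 2=4$ completions and the classes $\lambda,\lambda\mu,\lambda\mu,\lambda\mu^2$. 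You have the correct picture but no argument for uniqueness of the partial tree or for the exclusion of cross-cone edges.

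The more serious gap is in the signs. Write the four cone contributions as $\lambda+\epsilon_1\lambda\mu+\epsilon_2\lambda\mu+\epsilon_3\lambda\mu^2$ with $\epsilon_i\in\{\pm1\}$. A change of spin structure along $\mu$ flips the sign of every term of odd $\mu$-degree \emph{simultaneously}; it is one global operation and cannot pin down $\epsilon_1,\epsilon_2,\epsilon_3$ independently. Concretely, the sublooseness test -- that $\partial a$ for the spin structure extending over the cobordism to the loose sphere, i.e.\ the original polynomial evaluated at $\mu=-1$, $\lambda=1$, must be a unit in every field -- is passed not only by $1+\lambda(1+\mu)^2$ but also by $1+\lambda(1-\mu^2)$ and by $1+\lambda(1+2\mu-\mu^2)$, both of which evaluate to $\pm1$ there; so your argument cannot exclude these. (Also, ruling out $1+\lambda(1-\mu)^2$ requires reduction mod $5$ rather than mod $3$, since $1+\lambda(1+\mu)^2$ evaluates to $5$ at $\mu=\lambda=1$; the $\Z_3$ reduction is specific to the Clifford case.) The paper closes the gap with a neck-stretching argument near the two cone regions showing that the local sign contributions of the two resolutions in Figure \ref{fig:coneres} agree at \emph{both} cone points; this forces the answer to have the product form $1+\lambda(1\pm\mu)^2$ before the bootstrap is applied, and only then do the unknot comparison (fixing the sign of the $\lambda$ term) and sublooseness single out $1+\lambda(1+\mu)^2$. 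Your treatment of $\partial\hat a$ and the deduction of $H_0=\C[\mu^{\pm1},\lambda^{\pm1}]/\langle 1+\lambda(1+\mu)^2\rangle$ are correct.
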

\begin{proof}
The version of the Legendrian that satisfies the $S^1$-symmetry shown in Figure \ref{fig:Chekanov} is degenerate for two different reasons: first, the Reeb chords come in $S^1$-Bott manifolds and, second, it has two double-cone singularities in its front projection both located inside the axis of the rotational symmetry.

We will compute the rigid gradient flow-trees \cite{MorseFlowTrees} after a generic perturbation which, by the same work, computes the differential for a suitable choice of almost complex structure. Since we only care about such flow-trees with a single positive puncture at the chord $a$ of degree $|a|=1,$ these flow-trees will have no additional (negative) punctures.

If we keep the $S^1$-Bott family of Reeb chords, then this is the same as a rigid gradient flow-tree with a positive puncture at the Bott family of chords itself, but with a fixed asymptotic constraint.

We start with the flow-trees that do not enter the region near the axis of symmetry. Here there is single rigid gradient flow-tree with a single positive puncture at $a,$ which is as shown in Figure \ref{fig:chekanov-disk1}. This flow-tree has precisely two vertices (one puncture at $a$ and one so-called edge-vertex at the cusp-edge) and one edge. We take the capping paths of $a$ to coincide with the two arcs on $\Lambda$ that correspond to the edge of the flow-tree; the basepoint is hence located precisely at the cusp-edge. With this choice, together with an appropriate choice of sign for the generator, we obviously get a disc that contributes to a term
$$ \partial(a)=\ldots + (\pm 1) + \ldots$$
of the differential.

We then consider gradient flow-trees which enter the region near the axis of symmetry. By action reasons, and using the fact that the positive puncture is unique, it is not possible for such a gradient flow-tree to have an edge that involves two sheets corresponding to \emph{different} cones. (The difference in $z$-coordinate for two such sheets is always greater than the length of $a.$) In other words, the only edges that we need to consider near the cone-region in this case are those which are associated to precisely one of the two cones. Such edges are given simply by a gradient flow towards the cone-point, and the vertex at the cone-point will be called a {\bf cone-vertex}. See Figure \ref{fig:chekanov-disk2} for an example of a partial flow tree with precisely two such vertices.

In the perturbed Legendrian which resolves the cone-point to a generic front, the rigid partial gradient flow-trees with cone-vertices in the sense described above can be resolved to honest rigid gradient flow-trees by the following process. First use the perturbations of the Legendrian described in \cite{KnotContHom} and \cite{KnottedLegendrianSurface} in order to resolve the front-cone singularities to a generic front. In this perturbation, every cone-vertex has precisely the two possibilities of a completion to an honest gradient flow tree that are shown in Figure \ref{fig:coneres}. Conversely, every rigid gradient flow tree corresponds to a unique partial rigid flow-tree after reintroducing the degenerate front-cone.

In view of the above, we are able to postpone the perturbation that makes the front generic near the cone points, and simply start by finding the rigid partial gradient flow-trees that have cone-vertices and a puncture at a fixed point on the Bott manifold of Reeb chords $a.$ There is precisely one such partial flow-tree; see Figure \ref{fig:chekanov-disk2}. By considering all possible completions of the latter partial flow-tree after the perturbation of the front cone, we thus find the remaining four rigid gradient flow-trees that contribute to the four terms
$$\partial a=\ldots + (\pm)\lambda\pm\lambda\mu\pm\lambda\mu\pm\lambda\mu^2 + \ldots$$
of the differential.

We have now found all five rigid flow-trees that contribute to $\partial a.$ As in the proof of Theorem \ref{thm:dgacliff} we can show that the disc contributing to the term ``$1$'' and the disc whose resolutions at both cone-points is as shown on the left in Figure \ref{fig:coneres} are counted with the same signs. A neck-stretching argument near the cone regions then show that the local sign contribution from the two resolutions shown in Figure \ref{fig:coneres} are the same at both cone points, from which we conclude that
$$ \partial a=1+\lambda(1 \pm \mu)^2$$
(as opposed to $ \partial a=1+\lambda(1 - \mu)(1+\mu)$). As in the proof of Theorem \ref{thm:dgacliff} the fact that $\Lambda_{\OP{Ch}}$ is subloose then pin-points the final sign in the last factor.

The computation of $\partial \hat{a}$ is the same as for $\mathcal{A}(\Lambda_{\OP{Ch}})$ in the proof of Theorem \ref{thm:dgacliff}.
\end{proof}

\begin{figure}[t]
\centering
\labellist
\pinlabel $\gamma_{\OP{Ch}}$ at 23 30
\pinlabel $x$ at 135 52
\pinlabel $y$ at 48 110
\pinlabel $\color{red}a$ at 346 51
\pinlabel $\Lambda_{\OP{Ch}}$ at 210 95
\endlabellist
\includegraphics{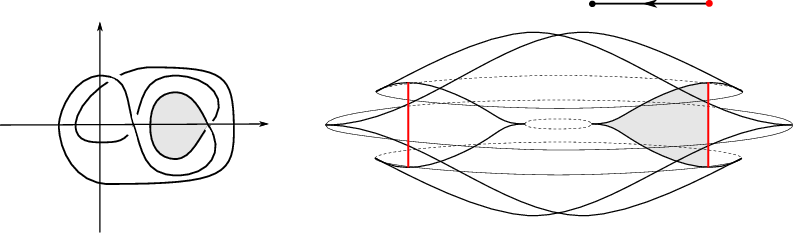}
\caption{A rigid gradient front-tree with precisely one edge and two vertices: one vertex being a positive puncture at $a,$ and one edge-vertex at the central cusp-edge.}
\label{fig:chekanov-disk1}
\end{figure}

\begin{figure}[t]

  \centering
\labellist
\pinlabel $\gamma_{\OP{Ch}}$ at 23 30
\pinlabel $x$ at 135 52
\pinlabel $y$ at 48 110
\pinlabel $\gamma_{\OP{Ch}}$ at 23 30
\pinlabel $x$ at 135 52
\pinlabel $y$ at 48 110
\pinlabel $\color{red}a$ at 335 51
\pinlabel $\Lambda_{\OP{Ch}}$ at 210 95
\endlabellist
\includegraphics{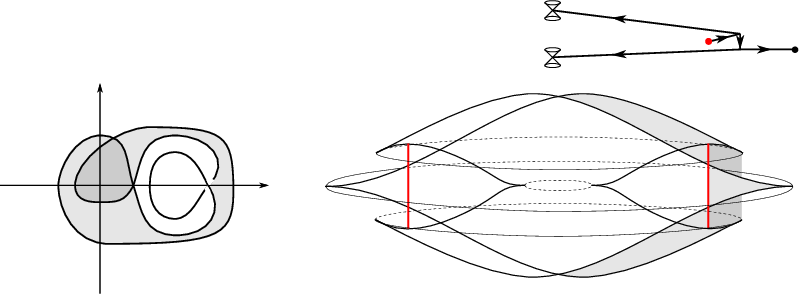}
\caption{A rigid partial gradient front-tree with one positive puncture at $a,$ one vertex at the outer cusp-edge, one cone-vertex at each  \color{black}  of the two cone points, and two three-valent vertices of type $Y_1.$}
\label{fig:chekanov-disk2}
\end{figure}

\subsection{A general computation of bilinearised LCH}
\label{sec:lchgeneral}
Even if we do not compute the full DGA of $\Lambda_{\OP{Ch}}$ we can still compute its bilinearised Legendrian contact homologies by a general result that we establish. Assume for now that we have a general Legendrian embedding of an oriented genus $g\geq 0$ surface $\Lambda \subset (\R^{5},\xi_{\OP{st}}).$ Further assume that $\Lambda$ has vanishing Maslov class and only Reeb chords of positive degree;  these assumptions are met by the Legendrian tori considered in this paper. \color{black}  The duality long exact sequence \cite{DualityLeg} by Ekholm--Etnyre--Sabloff and its generalisation \cite{BilinearisedLCH} by Bourgeois--Chantraine to the setting of bilinearised contact homology can then be used to compute all different bilinearised contact homologies.
\begin{thm}

  \label{thm:lchgeneral}

  Let $\varepsilon_0,\varepsilon_1 \colon (\mathcal{A},\partial) \to (\F,0)$ be two graded augmentations for a Legendrian oriented genus $g\geq 0$ surface $\Lambda \subset (\R^5,\xi_{\OP{st}})$ of vanishing Maslov class and with all Reeb chords in positive degrees. Then
\begin{itemize}
\item
when  $\varepsilon_0 = \varepsilon_1$: $$LCH_k^{\varepsilon_0,\varepsilon_1}(\Lambda)=\begin{cases} \F, & k=2,\\ \F^{g}, & k=1, \\ 0, & k\neq 1,2,
\end{cases}$$
\item when $0\leq g\leq 1$ and $\varepsilon_0 \neq \varepsilon_1$: $LCH_k^{\varepsilon_0,\varepsilon_1}(\Lambda) = 0$ for all $k,$
\end{itemize}
is satisfied for the bilinearised Legendrian contact homology groups.
\end{thm}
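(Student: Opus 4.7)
The plan is to apply the Ekholm--Etnyre--Sabloff duality long exact sequence \cite{DualityLeg} in the linearised case, and its bilinearised generalisation by Bourgeois--Chantraine \cite{BilinearisedLCH} in the general case. The vanishing of the Maslov class gives a well-defined $\Z$-grading on the (bi)linearised complexes, and the positivity of every Reeb chord degree immediately yields
\begin{equation*}
LCH^{\varepsilon_0,\varepsilon_1}_k(\Lambda)=LCH^k_{\varepsilon_0,\varepsilon_1}(\Lambda)=0 \qquad \text{for all } k\leq 0,
\end{equation*}
together with the analogous vanishings after swapping the augmentations.

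Consider first the linearised case $\varepsilon_0=\varepsilon_1=\varepsilon$. The EES sequence in dimension $n=2$ takes the form
\begin{equation*}
\cdots\to LCH^{1-k}_\varepsilon\to LCH^\varepsilon_k\to H_k(\Lambda;\F)\to LCH^{2-k}_\varepsilon\to LCH^\varepsilon_{k-1}\to\cdots
\end{equation*}
Using that $H_k(\Lambda)=0$ for $k\geq 3$, a direct induction combined with the preliminary vanishings gives $LCH^\varepsilon_k=0$ for $k\geq 3$. The portion at $k=2$ collapses to an isomorphism $LCH^\varepsilon_2\cong H_2(\Lambda;\F)=\F$, while the portion at $k=1$ reduces to a short exact sequence $0\to LCH^\varepsilon_1\to H_1(\Lambda;\F)\to LCH^1_\varepsilon\to 0$. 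Over a field, universal coefficients supply the identification $LCH^1_\varepsilon\cong(LCH^\varepsilon_1)^{*}$, so a dimension count forces $\dim LCH^\varepsilon_1=\tfrac{1}{2}\dim H_1(\Lambda;\F)=g$, as claimed.

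For the bilinearised case $\varepsilon_0\neq\varepsilon_1$ with $g\leq 1$, the hypothesis is vacuous when $g=0$: positivity of chord degrees forces a graded augmentation to vanish on every Reeb chord, while $H_1(\Lambda)=0$ trivialises the Novikov coefficient ring $\F[H_1(\Lambda)]=\F$, so there is at most one graded augmentation. When $g=1$ the two distinct graded augmentations must differ on $\F[H_1(\Lambda)]\cong\F[\Z^2]$, so that $\varepsilon_0\cdot\varepsilon_1^{-1}$ defines a nontrivial character $H_1(\Lambda;\Z)\to\F^{*}$, equivalently a nontrivial rank-one local system $\mathcal L$ on $\Lambda=T^2$. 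A standard computation gives $H_{*}(T^2;\mathcal L)=0$. In the BC bilinearised duality sequence the middle term is naturally identified with the $\mathcal L$-twisted homology of $\Lambda$; its vanishing collapses the sequence into isomorphisms $LCH^{\varepsilon_0,\varepsilon_1}_k\cong LCH^{1-k}_{\varepsilon_1,\varepsilon_0}$ for every $k$. Combining this with the preliminary positive-degree vanishings in both the homological and cohomological gradings forces $LCH^{\varepsilon_0,\varepsilon_1}_k=0$ for all $k$, as required. The main subtlety is the identification of the middle term of the BC sequence with $\mathcal L$-twisted homology of $\Lambda$, which is the precise mechanism by which distinct augmentations are converted into acyclicity on the torus; this identification is also consistent with the explicit bilinearised computation for $\Lambda_{\OP{Cl}}$ carried out in the proof of Theorem \ref{thm:dgacliff}, where distinct augmentations automatically kill the two surviving generators of the linearised complex.
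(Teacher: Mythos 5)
Your proposal is correct and follows essentially the same route as the paper: the (bi)linearised Ekholm--Etnyre--Sabloff/Bourgeois--Chantraine duality long exact sequence, the vanishing of $LCH$ in non-positive degrees forced by the positivity of chord degrees, the equality of dimensions of linearised homology and cohomology over a field, and the vanishing of the twisted homology $H_*(T^2;\varepsilon_0\otimes\varepsilon_1^{-1})$ for the nontrivial local system determined by two distinct graded augmentations. The only cosmetic difference is that you dispose of the $g=0$, $\varepsilon_0\neq\varepsilon_1$ case by noting it is vacuous, whereas the paper folds it into the local-system discussion; both amount to the observation that the degree-zero part of the DGA is exactly $\F[H_1(\Lambda)]$.
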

\begin{rem}
In the present situation two different augmentations are automatically inequivalent in the sense of \cite{BilinearisedLCH}, due to the fact that there are no Reeb chords of negative degree.  Namely, there can exist no DG-homotopy between two different augmentations merely for degree reasons (there cannot exist a chain homotopy even in the usual sense). \color{black}
\end{rem}
\begin{proof}
Let $\Sigma_g$ be an oriented genus $g\ge 0$ surface. First, assume that $0\leq g\leq 1$. Denote by $H_{k}(\Sigma_g;\varepsilon_0 \otimes \varepsilon_1^{-1})$ the Morse homology of $\Sigma_g$ with coefficients in $\F$ which is induced by the local system which takes the value $\varepsilon_0(\alpha)\varepsilon_1(\alpha)^{-1} \in \F$ on the class $\alpha \in H_1(\Sigma_g;\F).$ Recall the standard fact that this homology vanishes in all degrees unless the latter local system is trivial, i.e.~unless $\varepsilon_0=\varepsilon_1,$ while in the latter case we simply get the standard singular homology $H_k(\Sigma_g;\F)$ of the surface. (Of course when $g=0$ all local systems are trivial.)

 The duality long-exact sequence \cite[Theorem 1.5]{BilinearisedLCH} applied in this situation is equal to
 $$ \cdots \to H_{k+1}(\Sigma_g;\varepsilon_0\otimes \varepsilon_1^{-1}) \to LCH^{1-k}_{\varepsilon_1,\varepsilon_0}(\Lambda) \to LCH_k^{\varepsilon_0,\varepsilon_1}(\Lambda) \to H_{k}(\Sigma_g;\varepsilon_0\otimes \varepsilon_1^{-1})  \to \cdots $$
  The assumption on the gradings of the Reeb chords of $\Lambda$ immediately implies the vanishing $LCH_k^{\varepsilon_0,\varepsilon_1}(\Lambda)=0$ whenever $k\le 0$ and that $LCH^{1-k}_{\varepsilon_1,\varepsilon_0}(\Lambda)=0$ whenever $k \ge 1.$ Combined with the vanishing of the homology of the oriented genus $g=1$ surface with nontrivial local systems, the conclusion now follows in the case when $\varepsilon_0 \neq \varepsilon_1.$  Here we point out that two different graded augmentations automatically induce different local systems in the present situation; the reason is that the degree zero part of the DGA coincides with the Novikov ring $\F[H_1(\Lambda)]$ by assumption. \color{black}

What remains is the case when $\varepsilon_0=\varepsilon_1.$ Now we consider all $g\geq 0$. The immediate conclusion of the above vanishing result is that
\begin{align}
\label{eq:vanishing}
LCH^k_{\varepsilon_0,\varepsilon_0}(\Lambda)=0\ \mbox{when}\ k \neq 1,2,
\end{align}
and hence we can write part of the duality long exact sequence as
\begin{align*}
0=LCH^{-1}_{\varepsilon_0,\varepsilon_0}(\Lambda)\to LCH_2^{\varepsilon_0,\varepsilon_0}(\Lambda) \to H_{2}(\Sigma_g;\F) \to LCH^{0}_{\varepsilon_0,\varepsilon_0}(\Lambda)=0,
\end{align*}
which implies that
\begin{align*}
LCH^{\varepsilon_0,\varepsilon_0}_2(\Lambda) = \F.
\end{align*}

Recall that
\begin{equation}
\label{eq:adjoint}
  \dim LCH^{\varepsilon_0,\varepsilon_0}_*(\Lambda)=\dim LCH_{\varepsilon_0,\varepsilon_0}^*(\Lambda)
\end{equation}
is satisfied in general.

We now claim that $\dim LCH^{\varepsilon_0,\varepsilon_0}_1(\Lambda)=g.$
In order to show it, consider the following part of the duality long exact sequence, where  zero terms appear because of Equations \ref{eq:vanishing} and \ref{eq:adjoint}
\begin{align}\label{eq:secondformles}
 0\to LCH_{1}^{\varepsilon_0,\varepsilon_0}(\Lambda) \to  H_{1}(\Sigma_g;\F)\to LCH^{1}_{\varepsilon_0,\varepsilon_0}(\Lambda)\to  0.\end{align}
Using Equation \ref{eq:adjoint} and the fact that  $H_{1}(\Sigma_g;\F) \cong \F^{2g}$, we can rewrite long exact sequence \ref{eq:secondformles} as
\begin{align}\label{eq:finalles}
 0\to LCH_{1}^{\varepsilon_0,\varepsilon_0}(\Lambda) \to \F^{2g}\to LCH_{1}^{\varepsilon_0,\varepsilon_0}(\Lambda)\to  0.\end{align}
Since all  terms in long exact sequence \ref{eq:finalles} are $\F$-vector spaces, we see that
$LCH_{1}^{\varepsilon_0,\varepsilon_0}(\Lambda) \cong \F^{g}$. This finishes the proof.
\end{proof}

In addition, it is also possible to obtain restrictions on the variety of augmentations of a Legendrian torus satisfying the assumptions of Theorem \ref{thm:lchgeneral}.
\begin{thm}
\label{thm:augvar}
Let $\Lambda \subset (\R^5,\xi_{\OP{st}})$ be a Legendrian torus of vanishing Maslov class and with all Reeb chords in positive degrees. Its augmentation variety over $\C$ is then either empty, or   cut out by a single polynomial.
\end{thm}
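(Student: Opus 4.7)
The plan is to identify the augmentation variety with the zero locus $V(I) \subset (\C^*)^2 \cong \OP{Sp}(R)$ of an explicit ideal $I$ in $R := \C[H_1(\Lambda)] = \C[\mu^{\pm 1}, \lambda^{\pm 1}]$, and then to use the $\partial^2 = 0$ relation together with Theorem \ref{thm:lchgeneral} to show that $I$ is locally principal --- hence globally principal, since the UFD $R$ has trivial Picard group. Since all Reeb chords have positive degree, any graded augmentation $\varepsilon \colon \mathcal{A} \to \C$ automatically vanishes on every Reeb chord generator, so the constraint $\varepsilon \circ \partial = 0$ is vacuous on generators of degree at least two (whose differentials lie in the positive-degree part of $\mathcal{A}$) and reduces on degree-one generators $g_i$ to the vanishing of the polynomials $P_i := \partial g_i \in R$. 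Thus the augmentation variety coincides with $V(I)$ for $I := (P_1, \ldots, P_N)$.

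The central input is a syzygy for the $P_i$'s obtained from the degree-two generators. Writing $\partial h_j = \sum_i R^L_{ji} \cdot g_i \cdot R^R_{ji}$ schematically (with coefficients $R^{L/R}_{ji} \in R$), the commutativity of $R$ reduces $\partial^2 h_j = 0$ to the relation $\sum_i c_{ji} P_i = 0$ with $c_{ji} := \sum R^L_{ji} R^R_{ji}$; in matrix form this reads $C \cdot P = 0$ for $C := [c_{ji}] \in R^{M \times N}$. The transpose $C(\varepsilon)^T$ at any $\varepsilon$ is precisely the matrix of the bilinearised differential $d^{\varepsilon, \varepsilon}_{2 \to 1} \colon \C^M \to \C^N$; combined with the identity $d^{\varepsilon, \varepsilon}_{1 \to 0} = 0$ forced by the positive-degree hypothesis, Theorem \ref{thm:lchgeneral} applied with $g = 1$ delivers $\OP{rank}(C(\varepsilon)) = N - 1$ for every $\varepsilon \in V(I)$.

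The argument concludes by a coherent-sheaf computation. Unless $P \equiv 0$ (in which case $I = 0$ and the augmentation variety is $(\C^*)^2$, cut out by the polynomial $0$), the identity $C \cdot P = 0$ rules out $\OP{rank}(C(\varepsilon)) = N$ anywhere, since that would force $P(\varepsilon) = 0$ and place $\varepsilon \in V(I)$, where the rank is only $N - 1$. Hence $\OP{rank}(C) \leq N - 1$ everywhere on $(\C^*)^2$, and this bound is attained at every point of $V(I)$, so $\ker(C)$ restricts to a line bundle on the Zariski open set $U := \{\OP{rank}(C) \geq N - 1\}$ which contains $V(I)$. The open subset $U$ of the affine torus has vanishing Picard group (inherited from $\OP{Pic}((\C^*)^2) = 0$ via the standard localisation sequence for Picard groups), so $\ker(C)|_U$ is trivial. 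Locally near any $\varepsilon \in V(I)$, a nowhere-vanishing section $v = (v_1, \ldots, v_N)$ trivialises this line bundle, and I write $P = f \cdot v$ for a regular function $f$; since the $v_i$ have no common zero on $U$, they generate the unit ideal, yielding $I R_\varepsilon = (f) R_\varepsilon$. At points $\varepsilon \notin V(I)$ the ideal $I R_\varepsilon$ is trivially the unit ideal. Thus $I$ is locally principal at every maximal ideal; being an invertible ideal in a ring with trivial Picard group, $I$ is globally principal, and $V(I)$ is cut out by a single polynomial.

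The main obstacle is the identification of $C(\varepsilon)^T$ with the bilinearised differential $d^{\varepsilon,\varepsilon}_{2\to 1}$, followed by the rank propagation $\OP{rank}(C) \leq N - 1$ on all of $(\C^*)^2$; this propagation combines the $\partial^2 = 0$ relation with the LCH computation of Theorem \ref{thm:lchgeneral}, and is the step where the hypotheses on $\Lambda$ enter in an essential way. Once this is in place, the remaining commutative-algebra step --- that locally principal implies principal in a UFD with trivial Picard group --- is standard.
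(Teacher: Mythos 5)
Your core argument is sound and, at heart, it is the same mechanism as the paper's: Theorem \ref{thm:lchgeneral} with $g=1$ forces $\dim LCH_1^{\varepsilon,\varepsilon}(\Lambda)=1$, which (since $d^{\varepsilon,\varepsilon}_{1\to 0}=0$ by the positivity of the chord degrees) says that the images of the degree-two generators span a codimension-one subspace of the degree-one chain space at every augmentation, and this is exactly what makes the ideal $I=(P_1,\dots,P_N)$ locally principal along its zero locus. The packaging differs: the paper localises at $P^1_1\cdots P^m_m$, performs an explicit triangular change of basis using $\partial^2=0$, and exhibits a single local defining function $\widetilde{\partial}a_0$ on a principal open set around each augmentation; you instead work globally with the syzygy matrix $C$, use the constancy of $\rank C$ on $U$ to get a kernel line bundle, and invoke $\OP{Pic}=0$ twice (to trivialise that bundle and then to pass from locally principal to principal). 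Your route buys something slightly stronger and cleaner -- the ideal $I$ itself is principal, not merely its zero set a hypersurface, which meshes well with the ``augmentation polynomial'' of Section \ref{sec:potential} -- at the cost of a bit more commutative algebra. The identification of $C(\varepsilon)^T$ with $d^{\varepsilon,\varepsilon}_{2\to1}$, and the observation that $\partial g_i\in\C[H_1(\Lambda)]$ exactly because every word of positive degree must contain a chord, are both correct.

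The one genuine gap is your treatment of the case $P\equiv 0$. You dismiss it by saying the variety is then ``cut out by the polynomial $0$,'' but the theorem is clearly intended to assert a \emph{proper} hypersurface (the subsequent discussion of the augmentation polynomial presumes a nonzero defining function), and the paper spends the first half of its proof ruling this case out. Your framework cannot do so: if $P\equiv 0$ then $C\cdot P=0$ is vacuous and the rank bookkeeping yields no contradiction, since $\rank C(\varepsilon)=N-1$ everywhere is perfectly consistent. The extra input the paper uses is the map $\rho\colon LCH_1^{\varepsilon_0,\varepsilon_0}(\Lambda)\to H_1(\T^2;\Z_2)$ from the duality exact sequence, which is injective on a one-dimensional space by the proof of Theorem \ref{thm:lchgeneral}, together with the geometric identification of $\langle\rho(c),s\rangle$ with the gradient of the Laurent polynomial $\partial(c)$ evaluated at $(1,1)$; if all $\partial(c)$ vanished identically this pairing would be zero, contradicting injectivity. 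You would need to add this (or an equivalent) step to conclude that the augmentation variety is never all of $(\C^*)^2$; the rest of your proof stands.
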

\begin{proof}
First we argue that the augmentation variety is not all of $\OP{Sp}(\C[H_1(\Lambda)]) \cong (\C^*)^2.$ It suffices to show that there exists a Reeb chord of degree one \color{black} with a nontrivial boundary (whose image necessarily is an element of $\C[H_1(\Lambda)]$). We prove that there exists such a nonvanishing boundary with $\Z_2$-coefficients, which simplifies the considerations essentially, and implies the claim.

Argue by contradiction and assume that all such boundaries vanish. Thus the canonical unital DGA morphism $\varepsilon_0 \colon (\mathcal{A},\partial) \to \Z_2$ is clearly an augmentation. Consider the map
$$ LCH_{1}^{\varepsilon_0,\varepsilon_0}(\Lambda) \xrightarrow{\rho}  H_{1}(\T^2;\Z_2)$$
that arises in the duality long exact sequence \cite{DualityLeg}, which is defined by the choice of an auxiliary generic Morse function and Riemannian metric on $\Lambda.$ By \cite[Theorem 3.6(3) and Section 3.3.3]{DualityLeg} the value $\langle \rho(c),s\rangle$ for a Reeb chord $c \in LCH_{1}^{\varepsilon_0,\varepsilon_0}(\Lambda)$ and a critical point $s \in H_{1}(\T^2;\Z_2)$ of index one is given as the count of generalised pseudoholomorphic discs which consist of:
\begin{itemize}
\item a pseudoholomorphic polygon in $\R^4$ with boundary on the Lagrangian projection $\Pi_{\OP{Lag}}(\Lambda) \subset \R^4$ and a single positive puncture at the double point that corresponds to $c,$ together with
\item a negative gradient flow-line in $\Lambda$ that connects the boundary of the polygon to $s,$
\end{itemize}
such that the configuration moreover is rigid. Under suitable identifications, this count can be seen to be equal to the gradient $\nabla\partial(c) \in \C[\lambda^{\pm1},\mu^{\pm1}]$ of the Laurent polynomial $\partial(c)$ evaluated at $(1,1) \in \Z_2^2.$ The assumption that $\partial(c)$ vanishes for all Reeb chords $c$ of degree one is now seen to contradict the fact that $\rho$ is an inclusion of a one-dimensional $\Z_2$-vector space, as was shown in the proof of Theorem \ref{thm:lchgeneral} above.

Now consider an augmentation $\varepsilon_0 \colon (\mathcal{A},\partial) \to \C.$ We will show that there exists a one-dimensional component of the augmentation variety that passes through the corresponding point $\varepsilon_0 \in \OP{Sp}(\C[H_1(\Lambda)]).$ Choose a basis $\langle a_0,a_1,\ldots,a_m\rangle $ of the degree-one subspace of the complex $(LCC_*^{\varepsilon_0,\varepsilon_0}(\Lambda),\partial^{\varepsilon_0,\varepsilon_0}),$ which automatically consists of cycles, together with a basis $\langle b_1,\ldots,b_m\rangle $ of the degree-two subspace, such that $\partial^{\varepsilon_0,\varepsilon_0}(b_i)=a_i$ moreover is satisfied for $i\ge 1.$  That this is possible to achieve follows from Theorem \ref{thm:lchgeneral} above together with the assumption that $\Lambda$ is a torus. \color{black}

On the level of the DGA, after making the Novikov generators $\mu,\lambda$ commute with the Reeb chord generators, we thus conclude that
$$\partial b_i=\sum_{j=0}^m P^i_j(\mu,\lambda)a_j,$$
where $P^i_j \in \C[\mu^{\pm1},\lambda^{\pm1}]$ for $i=1,\ldots,m;$ $j=0,1,\ldots,m$ are Laurent polynomials that satisfy $\varepsilon_0(P^i_j)=\delta^i_j.$

Then consider the localisation
$$\C[\mu^{\pm1},\lambda^{\pm1}] \to \C[\mu^{\pm1},\lambda^{\pm1}][(P^1_1\cdots P^m_m)^{-1}],$$
and the induced DGA $(\widetilde{A},\widetilde{\partial})$ with coefficients in $\C[\mu^{\pm1},\lambda^{\pm1}][(P^1_1\cdots P^m_m)^{-1}]$ together with the canonical unital DGA morphism $(\mathcal{A},\partial) \to (\widetilde{\mathcal{A}},\widetilde{\partial}).$ Since none of the polynomials $P^i_i$ vanish at the point corresponding to $\varepsilon_0,$ the augmentation $\varepsilon_0$ descends to an augmentation $\widetilde{\varepsilon_0} \colon (\widetilde{A},\widetilde{\partial}) \to \C.$ In other words, we have
\begin{equation}
  \label{eq:deltilde}
  \widetilde{\partial}b_i=\sum_{j=0}^m P^i_ja_j, \:\: i \in [1,m],
\end{equation}
where $P^i_i \in \C[\mu^{\pm1},\lambda^{\pm1}][(P^1_1\cdots P^m_m)^{-1}]$ is a unit and $\widetilde{\varepsilon_0}(P^i_j)=0$ if $i \neq j.$

By induction we assume that we can perform a $\C[\mu^{\pm1},\lambda^{\pm1}][(P^1_1\cdots P^m_m)^{-1}]$-linear change of coordinates of the $\C[\mu^{\pm1},\lambda^{\pm1}][(P^1_1\cdots P^m_m)^{-1}]$-submodule generated by $\langle a_1,\ldots,a_m \rangle$ after which, in addition  \color{black}  to Equality \eqref{eq:deltilde}, also $\widetilde{\partial} a_i=0$ is satisfied for the (possibly empty) set of values $i \in [m_0+1,m]$ and some $m_0 \in [1,m].$ Since $\widetilde{\partial}^2b_{m_0}=0$ is satisfied, we can express
$$\widetilde{\partial}a_{m_0}=-(P^{m_0}_{m_0})^{-1}\sum_{j < m_0} P^{m_0}_j\widetilde{\partial}a_j,\:\:\:i=1,\ldots,m.$$
Hence we can make $\widetilde{\partial}a_{m_0}=0$ satisfied as well after the change of coordinates
$$ a_{m_0} \mapsto a_{m_0} + (P^{m_0}_{m_0})^{-1}\sum_{j < m_0} P^{m_0}_ja_j.$$
By induction we may thus readily assume that $\widetilde{\partial}a_i=0$ is satisfied for all $i \in [1,m]$ in addition to Equality \eqref{eq:deltilde}.

In conclusion, we have shown that the augmentation variety is cut out by the single function $\widetilde{\partial}a_0 \in \C[\mu^{\pm1},\lambda^{\pm1}][(P^1_1\cdots P^m_m)^{-1}]$ inside the principal open subset
$$\{P^1_1\cdots P^m_m \neq 0\} \subset \OP{Sp}(\C[\mu^{\pm1},\lambda^{\pm1}]),$$
which implies the claim.
\end{proof}

\section{Sublooseness and existence of exact caps}
\label{sec:subflex}

There exists an elementary exact Lagrangian cobordism from either of $\Lambda_{\OP{Cl}},\Lambda_{\OP{Ch}} \subset (S^5,\xi_{\OP{st}})$ to the loose sphere. This elementary cobordism arises as the  exact Lagrangian 2-handle  \color{black}  attachment of a single handle as constructed in \cite{Surgery},  induced by an ambient Legendrian 1-surgery.  \color{black}

We recall the construction  of Legendrian 1-surgery on a Legendrian surface; since the surgery is performed on a sphere of codimension one, it is a \emph{critical} surgery according to the terminology from \cite{Surgery}. Assume that we are given  \color{black}  a Legendrian disc $D \subset (S^5,\xi_{\OP{st}})$ that intersects a Legendrian surface $\Lambda_-$ cleanly precisely along its boundary.  The disc determines  \color{black}  an embedded Legendrian $\Lambda_+$ which,  as a topological space, \color{black}  is obtained from $\Lambda_-$ by surgery along the embedded codimension one sphere $\partial D \subset \Lambda_-.$ There is also an associated exact Lagrangian handle attachment cobordism that goes from $\Lambda_-$ (at the negative end) to $\Lambda_+$ (at the positive end). Note that the handle attachment cobordism can be assumed to be a trivial cylinder outside of an arbitrarily small neighbourhood of $\R \times D \subset (\R \times S^5,d(e^t\alpha_{\OP{st}}))$ in the symplectisation.

Recall that the local model inside a neighbourhood of the disc $D$ of the Legendrian surface before and after an ambient 1-surgery both can be taken to have fronts that satisfy a rotational $S^1$-symmetry. For that reason, \color{black}  the ambient surgery can be described using the Lefschetz fibration $f$ from Section \ref{sec:lefschetz}. On the left in Figure \ref{fig:surgery} is the local model for the Legendrian $\Lambda_-$ together with the Legendrian surgery disc, while the effect of the surgery is shown on the right side. In terms of the front projection, the Legendrian $\Lambda_-$ is a double-cone with the surgery disc $D$ projecting to the cone-point, while the result $\Lambda_+$ of the surgery resolves the cone-point to two smooth sheets that intersects transversely along an embedded $S^1$; see  the middle part  \color{black}  of Figure \ref{fig:loosesphere}.

To relate the front projection of the surgery model and the image of its Lagrangian projection under the Lefschetz fibration $f$ we again argue as in the last paragraphs of Section \ref{sec:frontsoflifts}. That is, we first lift the curves shown in Figure \ref{fig:surgery} (the image of the Lagrangian projection under the Lefschetz fibration) under the two-fold cover of $D^2$ branched at the origin. We view these lifts as an exact Lagrangian immersion of a union of two paths in $D^2$. We take the Legendrian lift of this Lagrangian immersion to thus obtain two embedded Legendrian arcs in $\R^3$; these Legendrian arcs have a front projection which is symmetric under the reflection $x \mapsto -x$. Finally perform a symmetric $S^1$-spinning construction to obtain the Legendrian surfaces which model the ambient 1-surgery.

\begin{remark}
The front version of the ambient surgery described above thus replaces a double cone with two immersed sheets that intersect transversely in a circle with a unique maximum-type Reeb chord; see Figure \ref{fig:loosesphere}. The more common picture of this surgery is to, first, start with a circular cusp-edge and, then, replace it by two disjoint embedded sheets with a minimum-type Reeb chord in the middle. In fact, these two surgeries are related by a Legendre transform and, in particular, these local models are contactomorphic. (Another way to see this is to perform a rotation of Figure \ref{fig:surgery} while considering the corresponding Legendrian lifts. A rotation by $\pi$ radians interchanges the two different models.)
\end{remark}
 \color{black}

\begin{figure}[htp]
\centering
\vspace{3mm}
\labellist
\pinlabel $x$ at 195 92
\pinlabel $y$ at 93 197
\pinlabel $x$ at 403 92
\pinlabel $y$ at 300 197
\pinlabel $\color{blue}\Lambda^-$ at 55 105
\pinlabel $\color{blue}\Lambda^+$ at 263 105
\pinlabel $D$ at 80 100
\endlabellist
\includegraphics{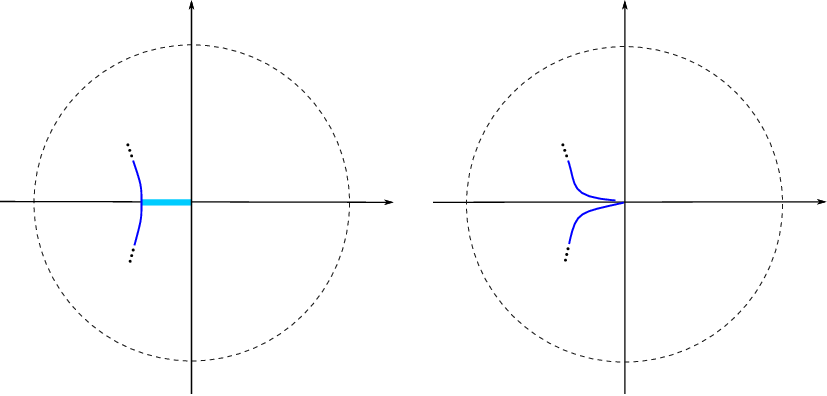}
\caption{The local model of the (critical) Legendrian ambient surgery described using the Lefschetz fibration $f$ from Section \ref{sec:lefschetz}. To the left: the Legendrian $\Lambda_-$ before the surgery together with the Legendrian surgery disc $D.$ To the right: the effect of the surgery $\Lambda_+,$ which is a Legendrian with one additional Reeb chord (which in this case projects to the origin).  The two branches of the Legendrian after the surgery project to two Lefschetz thimbles which are disjoint except for their common starting point, i.e.~the origin. \color{black} }
\label{fig:surgery}
\end{figure}

\emph{The lift of the Clifford torus:} In this case both the surgery disc and the result of the Legendrian ambient surgery are shown in Figure \ref{fig:loosesphere}. The result is clearly the loose two-sphere, since it coincides with an $S^1$-spun of a stabilised arc in some neighbourhood as in the proof of Lemma \ref{lem:loose} (alternatively, one can apply Lemma \ref{lem:loose} directly to the Lefschetz fibration).

\emph{The lift of the Chekanov torus:} In this case we take the surgery disc to live above the lower of the two cone-points in the front projection Figure \ref{fig:Chekanov}. The result of the Legendrian ambient surgery is then shown on the left side of  Figure \ref{fig:ChekanovLoose}. After an $S^1$-spun of a Reidemeister-II move (this preserves the Legendrian isotopy class) we see that the result of the surgery indeed is the loose two-sphere as well; in the Legendrian on the right in Figure \ref{fig:ChekanovLoose} one can clearly see the $S^1$-spun of a stabilised arc as described in the proof of Lemma \ref{lem:loose} (alternatively, one can apply Lemma \ref{lem:loose} directly to the Lefschetz fibration).

\begin{figure}[htp]
\centering
\labellist
\pinlabel $\Lambda_{\OP{Cl}}$ at 12 75
\pinlabel $\Lambda_{\OP{loose}}$ at 250 75
\pinlabel $D_1$ at 118 45
\endlabellist
\includegraphics[scale=0.9]{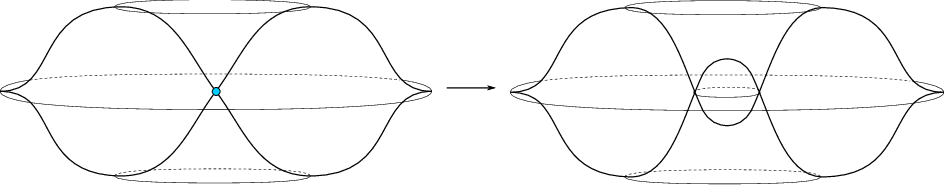}
\caption{Above the blue dot inside the cone-point there lives an embedded Legendrian disc $D_1 \subset (S^5,\xi_{\OP{st}})$ which intersects the Legendrian cleanly along its boundary. The ambient surgery on $\Lambda_{\OP{Ch}}$ by using the Legendrian surgery disc $D_1.$ The resulting manifold is obviously the loose two-sphere $\Lambda_{\OP{loose}}.$}
\label{fig:loosesphere}
\end{figure}

\begin{figure}[htp]
  \centering
  \labellist
  \pinlabel $\Lambda_{\OP{loose}}$ at 260 80
\endlabellist
\includegraphics[scale=0.9]{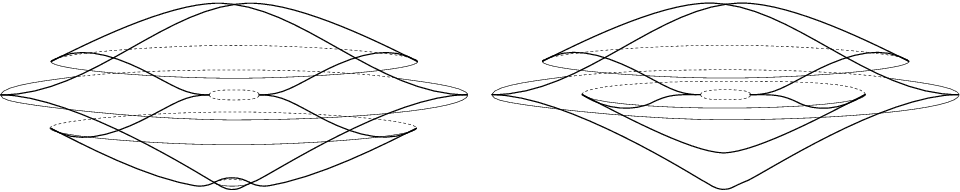}
\caption{On the left: The result of an ambient surgery on $\Lambda_{\OP{Ch}}$ shown in Figure \ref{fig:Chekanov} by using the Legendrian surgery disc $D_2$ contained over the bottom cone point. On the right: after an $S^1$-spun Reidemeister-II move in the front projection (this is a Legendrian isotopy) we see an $S^1$-spun of a stabilisation. Hence the Legendrian is the loose two-sphere.}
\label{fig:ChekanovLoose}
\end{figure}

Since it is well-known that  the loose sphere does not admit any exact Lagrangian filling, neither does any of the two former Legendrian tori $\Lambda_{\OP{Cl}}$ or $\Lambda_{\OP{Ch}}.$ In addition, the loose sphere was shown to admit an exact Lagrangian cap in \cite{EkholmMurphy}. By concatenation of exact Lagrangian cobordisms, it follows that both tori $\Lambda_{\OP{Cl}}$ and $\Lambda_{\OP{Ch}}$ also admit exact Lagrangian caps.

\subsection{Regularity of exact Lagrangian cobordisms}

The class of regular Lagrangians in a Weinstein manifold was first introduced by Eliashberg--Ganatra--Lazarev in \cite{FlexibleLagrangians} and was then studied in \cite{FlexibleLagrangians,EliashbergSurvey,LazarevHprincipleRegularLagrangians}.

We start with the definition of a regular exact Lagrangian
cobordism.

\begin{defn}
Given a Weinstein cobordism $(W, \omega, X, \phi)$, where $\omega$
denotes a symplectic form, $X$ is an expanding Liouville vector
field for $(W, \omega)$, $\phi:W\to \R$ is defining for $W$ and
Lyapunov for $X$, an exact Lagrangian cobordism $L\subset W$ is
called {\em regular} if $(W, \omega, X, \phi)$ can be  deformed  to
a Weinstein cobordism $(W, \omega', X', \phi')$ through Weinstein
structures for which $L$ is Lagrangian and $X'$ is tangent to $L$.
\end{defn}

The following question about regularity of exact Lagrangian
cobordisms has been asked by Eliashberg--Ganatra--Lazarev in
\cite{FlexibleLagrangians}, and by Eliashberg in
\cite{EliashbergSurvey}.
\begin{question}[\cite{FlexibleLagrangians, EliashbergSurvey}]
\label{questionfromGanatra-Eliashberg-Lazarev} Given a Weinstein cobordism $(W, \omega, X,
\phi)$ such that
$\partial_- W\neq \emptyset$  and an exact Lagrangian cobordism $L\subset (W, \omega, X,
\phi)$ such that $\partial_- L\neq \emptyset$ is not loose, is $L$
regular?
\end{question}
Since an exact Lagrangian cap inside a symplectisation $L \subset \R \times Y$  is null-homologous as a class in the relative homology group, it cannot be regular by \cite[Lemma 2.4]{FlexibleLagrangians}. The existence of subloose, but not loose, Legendrians combined with the existence of exact Lagrangian caps \cite{LagrangianCaps} by Eliashberg--Murphy thus provides a negative answer to Question \ref{questionfromGanatra-Eliashberg-Lazarev} above. (One could also allude to the example of the subloose sphere by Casals--Murphy referred to in Remark \ref{rem:subflexible}.) We elaborate on this construction in the following result. \color{black}
\begin{thm}
  \label{NonRegularCapsofSurfaces}
  For any $g>0$ and $k_1,\ldots,k_r \ge 0$ there exists infinitely many different Legendrian isotopy classes of subloose Legendrian embeddings $\Lambda \subset (\R^{2k_1+\ldots+2k_r+5},\xi_{\OP{st}})$ of the manifold $S^{k_1} \times \ldots \times S^{k_r} \times \Sigma_g,$ where $\Sigma_g$ denotes the surface of genus $g,$ which:
  \begin{itemize}
  \item have vanishing Maslov classes;
  \item have Chekanov--Eliashberg algebras with Novikov coefficients $R=\C[H_1(\Lambda)]$ that admits (0-graded) augmentations; and
  \item admit orientable exact Lagrangian caps \color{black} inside the symplectisation with vanishing Maslov classes.
  \end{itemize}
In particular, none of these Legendrian surfaces are loose.
\end{thm}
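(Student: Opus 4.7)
The plan is to assemble the desired Legendrians from the subloose tori $\Lambda_{\OP{Cl}}$ and $\Lambda_{\OP{Ch}}$ of Section \ref{sec:examples-plane} by combining three standard constructions: cusp-connect sum, Legendrian ambient $1$-surgery, and Ekholm--K\'{a}lm\'{a}n front $S^k$-spinning. The base case $r=0$, $g=1$ is essentially given by the tori $\Lambda_{\OP{Cl}}$ and $\Lambda_{\OP{Ch}}$ themselves: each has vanishing Maslov class by Proposition \ref{prop:maslov}, admits a $0$-graded augmentation over the Novikov ring $\F[H_1(\Lambda)]$ by the computations of Section \ref{sec:dga}, and is subloose via the explicit ambient surgeries depicted in Figures \ref{fig:loosesphere} and \ref{fig:ChekanovLoose}. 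Concatenating the cobordism to the loose sphere with an orientable exact Lagrangian cap of the loose sphere produced by \cite{LagrangianCaps} gives the required cap, whose Maslov class vanishes because both pieces have vanishing Maslov class.

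To reach arbitrary genus $g \ge 1$ while remaining in $(\R^5, \xi_{\OP{st}})$, I would form the cusp-connect sum $\Lambda_g$ of $g$ Legendrian tori drawn from the set $\{\Lambda_{\OP{Cl}}, \Lambda_{\OP{Ch}}\}$, placed in pairwise disjoint Darboux balls. Cusp-connect sum preserves vanishing of the Maslov class and, by the surgery formula for the Chekanov--Eliashberg algebra, the tensor product of augmentations of the summands extends to an augmentation of $\Lambda_g$. Sublooseness is inherited in two stages: first I would perform the ambient $1$-surgery of Figures \ref{fig:loosesphere}--\ref{fig:ChekanovLoose} on each summand inside its own Darboux ball, reducing $\Lambda_g$ to a disjoint union of $g$ loose two-spheres via an exact Lagrangian cobordism, and then merge these pairwise by further loose $1$-handle attachments into a single loose two-sphere. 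To obtain infinitely many pairwise non-Legendrian isotopic examples for each fixed $g$, I would vary the connect summands more flexibly, for instance by iterating the front projection modifications of Section \ref{sec:comp-ch} to realize augmentation varieties of arbitrarily high degree in the sense of Theorem \ref{thm:augvar}; distinct Legendrian isotopy classes are then detected by their augmentation varieties.

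For general $(k_1, \ldots, k_r)$ I would next apply $r$ iterated front $S^{k_i}$-spinnings to $\Lambda_g$, a construction which takes a closed Legendrian $\Lambda \subset (\R^{2n+1}, \xi_{\OP{st}})$ to a Legendrian $S^k \times \Lambda \subset (\R^{2n+2k+1}, \xi_{\OP{st}})$. I would verify that each spinning preserves the required properties: the Maslov class of the spun Legendrian is pulled back from that of $\Lambda$ and hence still vanishes; every augmentation over $\F[H_1(\Lambda)]$ extends to an augmentation of the spun Legendrian via the standard Chekanov--Eliashberg computation for $S^k$-spins; and sublooseness is preserved because the cobordism-to-loose of $\Lambda$ spins to an exact Lagrangian cobordism into a spun loose Legendrian, which remains loose in higher dimensions since a loose chart in $\R^{2n+1}$ spins to a loose chart in $\R^{2n+2k+1}$. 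Concatenating with an orientable cap of the spun loose Legendrian, again produced by \cite{LagrangianCaps}, gives the required orientable exact Lagrangian cap with vanishing Maslov class.

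The principal obstacle is the construction of an infinite family of pairwise distinct Legendrian isotopy classes for each fixed topology: the remaining properties propagate through the constructions fairly directly. I expect this will require the full strength of Theorem \ref{thm:augvar} together with a flexibility argument showing that, by varying the front projections of the ambient tori in a controlled fashion, one can realize infinitely many distinct augmentation varieties cut out by polynomials of unbounded degree. These augmentation varieties, whose invariance under Legendrian isotopy is ensured by the invariance of the underlying Chekanov--Eliashberg algebra, then serve as effective Legendrian isotopy invariants distinguishing the members of the infinite family.
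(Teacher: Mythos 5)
Your overall architecture (cusp-connect sums of the subloose tori to reach genus $g$, ambient $1$-surgeries to a loose sphere followed by an Eliashberg--Murphy cap, and iterated spherical front spinning to account for the factors $S^{k_1}\times\cdots\times S^{k_r}$) matches the paper's proof, and your verification of the three bulleted properties along the way is essentially correct. The genuine gap is in the step you yourself flag as the principal obstacle: producing \emph{infinitely many} distinct Legendrian isotopy classes for each fixed diffeomorphism type. Drawing the $g$ summands from $\{\Lambda_{\OP{Cl}},\Lambda_{\OP{Ch}}\}$ gives only finitely many candidates, and your proposed remedy --- ``iterating the front projection modifications of Section \ref{sec:comp-ch} to realize augmentation varieties of arbitrarily high degree'' --- is not a construction but a hope: no mechanism is given for producing such fronts, for keeping them subloose while retaining augmentations over the Novikov ring, or for actually computing their augmentation varieties. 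Moreover, Theorem \ref{thm:augvar} applies only to \emph{tori} all of whose Reeb chords have positive degree, so it cannot be invoked to control the augmentation varieties of genus $g>1$ surfaces or of connect sums whose chord degrees you have not checked.

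The paper closes this gap by a different and more robust mechanism: for each $N$ it takes an auxiliary Legendrian two-sphere $\Lambda_N$ (a cusp-connect sum of $N$ copies of the sphere from \cite[Example 9.4.2]{Cthulhu}) admitting an exact spin filling $L_N$ with $\dim\bigoplus_i H_i(L_N;\C)\ge N$; by Seidel's isomorphism the induced augmentation has linearised Legendrian contact cohomology of total rank at least $N$, and the long exact sequence of \cite[Theorem 1.1]{Cthulhu} for the cusp-connect-sum cobordism shows that the cusp-connect sum of $\Lambda_N$ with the genus $g$ surface carries an augmentation whose linearised $LCH$ has total rank at least $N-1$. Since the collection of isomorphism classes of linearised $LCH$'s over all augmentations is a Legendrian isotopy invariant, and its rank is bounded above by the number of Reeb chords of any generic representative, letting $N\to\infty$ yields infinitely many classes; sublooseness survives because the $\Lambda_{\OP{Cl}}$-summands can still be surgered to loose spheres and cusp-connect sum with a loose Legendrian is loose. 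In the spun case the paper again distinguishes the examples with the K\"unneth formula for linearised $LCH$ rather than with augmentation varieties. To repair your argument you would need to replace the augmentation-variety step with an invariant you can actually compute and bound from below, which is precisely what the linearised theory combined with fillable connect summands provides.
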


\begin{rem}
It can further be shown that there are infinitely many diffeomorphism types of orientable exact Lagrangian caps for the above subloose Legendrians; see \cite{EkholmMurphy}.
\end{rem}

In view of the above we would like to make the following reasonable refinement of
Question \ref{questionfromGanatra-Eliashberg-Lazarev}:

\begin{question}
\label{refinedquestionfromGanatra-Eliashberg-Lazarev}
Let $L$ be an exact Lagrangian cobordism in a Weinstein cobordism $(W,
\omega, X, \phi)$ such that  $\partial_- L \neq \emptyset$ is not subloose. Is $L$
regular?
\end{question}

\subsection{Proof of Theorem \ref{NonRegularCapsofSurfaces}}
\label{ProofOfTheMainTheorem}

\subsubsection{The case of surfaces}

We start by considering the subloose surface of genus $g>0$ obtained by taking the cusp-connect sum of $g$ number of copies of $\Lambda_{\OP{Cl}}$; for the definition of cusp-connect sum see \cite{Surgery}. These Legendrian surfaces were also considered in \cite{KnottedLegendrianSurface}.  
\begin{remark}An important feature of the cusp-connected sum is that it preserves all of the three following properties: admitting an augmentation, admitting an exact Lagrangian filling, and admitting a loose chart (i.e.~being loose).
\end{remark}
 \color{black}
Denote the Legendrian embedding above \blk by $\Sigma_g \subset (\R^5,\xi).$ Observe that this Legendrian surface satisfies the statements of the theorem. What then remains is to make modifications to yield an infinite set of Legendrians that are pairwise different up to Legendrian isotopy. \color{black}

For any $N>0$ we then produce a Legendrian $2$-sphere $\Lambda_N$ which admits an exact spin Lagrangian filling $L_N$ of vanishing Maslov class such that
$$\dim \bigoplus\limits_i H_i(L_N;\C) \ge N$$
is satisfied. For instance, $\Lambda_N$ can be taken to be the cusp connected sum $N$ copies of the Legendrian sphere from \cite[Example 9.4.2]{Cthulhu}. By Seidel's isomorphism (see e.g.~\cite{FloerConc,Cthulhu}) it follows that the augmentation $\varepsilon_{L_N}$ induced by this filling has a Legendrian contact cohomology $$\bigoplus\limits_i LCH^{i}_{\varepsilon_{L_N}}(\Lambda_N) \cong \bigoplus\limits_i H_{i}(L_N;\C)$$
with complex coefficients whose total rank is at least $N$ as well.

Recall the long exact sequence from \cite[Theorem 1.1]{Cthulhu}  which involves the linearised Legendrian contact homology groups of the two Legendrians $\Lambda_\pm$ related by an exact Lagrangian cobordism from $\Lambda_-$ to $\Lambda_+$, and the singular homology of the cobordism. In the particular case when the Lagrangian cobordism is a standard Lagrangian 1-handle attachment cobordism $L$ from two Legendrians $\Lambda_-=\Lambda_1 \cup \Lambda_2$ contained in separate Darboux balls to their cusp connected sum $\Lambda_-=\Lambda_1 \# \Lambda_2$, this long exact sequence specialises to
\begin{align*}
& \ldots \to H_{3-k}(L,\Lambda_-) \to LCH^k_{\varepsilon}(\Lambda_1) \oplus LCH^k_{\varepsilon}(\Lambda_2) \to LCH^k_{\varepsilon_+}(\Lambda_1 \# \Lambda_2 ) \to\\
& \to H_{3-(k-1)}(L,\Lambda_-) \to \ldots,
\end{align*}
where $\varepsilon_+$ is the pull-back of $\varepsilon$ under the DGA-morphism induced by $L$, and
$$H_{3-k}(L,\Lambda_-)=\begin{cases}
\F, & k=2\\
0, & o.w.
\end{cases}$$
From this long exact sequence it thus  \color{black}  follows that there exists an augmentation $\varepsilon$ of the cusp-connect sum $\Lambda \subset (\R^5,\xi_{\OP{st}})$ of $\Lambda_N$ and $\Sigma_g$ for which the linearised Legendrian contact cohomology $LCH^*_{\varepsilon}(\Lambda)$ with coefficients in $\C$ has total rank equal to at least $N-1.$

Recall that the set of isomorphism classes of linearised Legendrian contact cohomologies for all different augmentations is a Legendrian isotopy invariant \cite{DiffGradedAlgebraLegLinks}. Further, the dimension of the linearised contact cohomology for any augmentation has an a priori upper bound given by the number of Reeb chords for any given generic Legendrian representative. It is thus clear that we can construct infinitely many Legendrian embeddings $\Lambda$ in the above manner by choosing $N>0$ larger and larger.

\subsubsection{The existence of caps}
Since we saw above that $\Lambda_{\OP{Cl}}$ admits an orientable exact Lagrangian cobordism to the loose sphere, the same is true for $\Lambda$ as well.  Namely, performing $g$ surgeries on the $g$ number of $\Lambda_{\OP{Cl}}$-summands produces a connected sum of a number $g$ of loose Legendrian spheres contained in different Darboux balls. (Recall that cusp connected sum preserves the property of being loose.) The sought exact Lagrangian cobordism is the associated handle attachments. (For a cobordism to a loose genus $g-1$ surface, a single such Legendrian 1-surgery would clearly have been sufficient, but here we are interested in a cobordism to a \emph{sphere}.) \color{black}  The existence of exact Lagrangian caps for the loose sphere was established in \cite{LagrangianCaps} by Eliashberg--Murphy; also see the work \cite{EkholmMurphy} by Ekholm--Eliashberg--Murphy--Smith. The sought cap of $\Lambda$ is then given by the concatenation of the exact Lagrangian cobordism to the loose sphere and the exact Lagrangian cap of the loose sphere.

\subsubsection{The higher dimensional case}
We apply the spherical front-spinning construction by the second author \cite{FrontSpinningConstruction} to the above Legendrian surfaces and exact Lagrangian cobordisms to produce the subloose embeddings and their exact Lagrangian caps.

In order to deduce the existence of augmentations with Novikov coefficients we use the partial computation \cite[Theorem 3.1]{EstimatingReebChordsCharacteristicAlgebra} of the DGA of the $S^m$-spun of a Legendrian.

In order to deduce that the spuns remain in different Legendrian isotopy classes we use K\"{u}nneths formula \cite[Theorem 2.5]{FloerConc}, by which
$$LCH^*_{\varepsilon_{\Sigma_{S^m}L_N}}(\Sigma_{S^m}\Lambda_N)\cong LCH^*_{\varepsilon_{L_N}}(\Lambda_N) \otimes H_*(S^m).$$
Then we iterate this procedure and compute $LCH^*_{\varepsilon_{\Sigma_{S^{k_r}}\dots\Sigma_{S^{k_1}}L_N}}(\Sigma_{S^{k_r}}\dots\Sigma_{S^{k_1}}\Lambda_N)$. Note that this isomorphism holds also with coefficients in $\C$ given that the filling $ L_N$ is spin.
In addition, observe that \cite[Proposition 1.1]{FrontSpinningConstruction} implies that the spherical front spun of the cusp-connected sum is the cusp-connected sum of the spherical front spuns of the components.  Recall that a Legendrian obtained by performing a cusp-connected sum is related to the original Legendrian by an exact Lagrangian cobordism. \color{black}  Using this and the long exact sequence described in \cite[Theorem 1.1]{Cthulhu}, similarly to the 2-dimensional case,
we observe that  there exists infinitely many different Legendrian isotopy classes of subloose Legendrian embeddings $\Lambda \subset (\R^{2k_1+\ldots+2k_r+5},\xi_{\OP{st}})$ of the manifold $S^{k_1} \times \ldots \times S^{k_r} \times \Sigma_g$.
\qed

\section{Looseness of the standard Legendrian disc}
\label{sec:disc}

\begin{figure}[h!]
\vspace{3mm}
\centering
\labellist
\pinlabel $x$ at 402 296
\pinlabel $x$ at 402 87
\pinlabel $x$ at 194 87
\pinlabel $x$ at 194 296
\pinlabel $y$ at 297 403
\pinlabel $y$ at 297 193
\pinlabel $y$ at 89 403
\pinlabel $y$ at 89 193
\pinlabel $\color{blue}\Re\C^n$ at 125 305
\endlabellist
\includegraphics[scale=0.85]{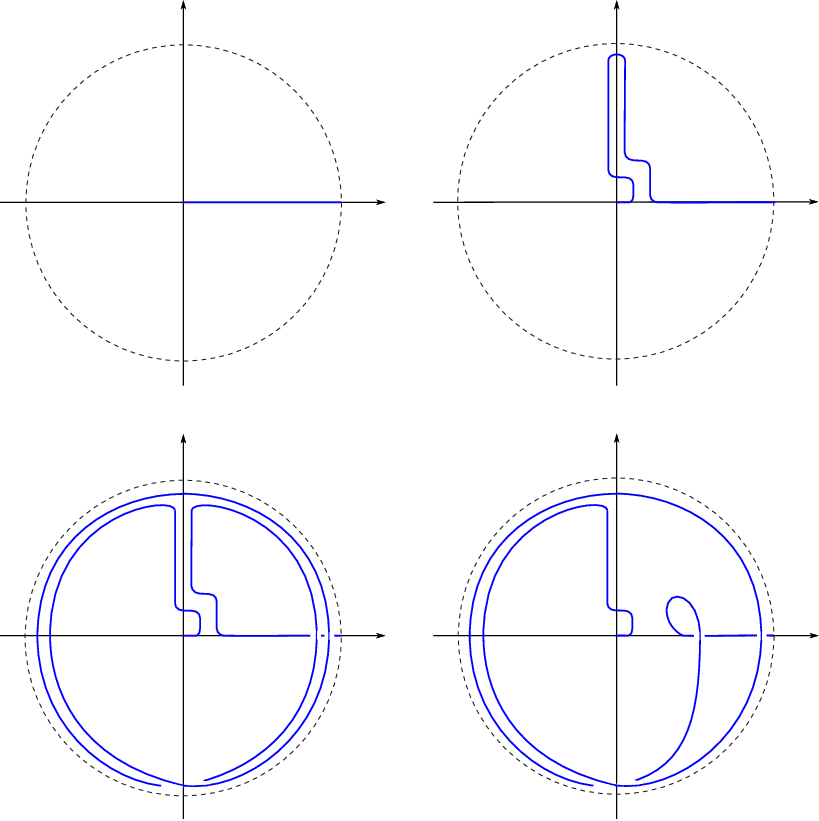}
\caption{A regular homotopy through Lagrangian discs described in terms of the fibration $f \colon \Sigma \to \dot{D}^2_{1/2}.$}
\label{fig:homotopy-disc}
\end{figure}

The standard Legendrian disc has a Lagrangian projection to $\CP^n$ that can be identified with the real part inside an affine chart $\C^n.$ This is the vanishing thimble for the standard Lefschetz fibration on $\C^n,$ and hence we identify it with the Lagrangian which lives above the positive $x$-axis inside $\dot{D}^2_{1/2}$ under the Lefschetz fibration $f \colon \Sigma \to \dot{D}^2_{1/2}.$

We must produce a Legendrian isotopy which is fixed near the boundary of the aforementioned Legendrian to exhibit a loose chart. To do this we follow the same strategy as in the proof of Theorem \ref{thm:frontsoflifts} given in Section \ref{sec:frontsoflifts}. The sequence of moves is shown in Figure \ref{fig:homotopy-disc}, where going from the top right curve to the bottom left curve requires that we pass over the line at infinity. Note that the Legendrian lifts do not intersect the boundary of the disc, which is fixed, even though the Lagrangian projections clearly do intersect. In order to see this it suffices to compute the symplectic action at the points that intersect the boundary, and to check that it is not an integer multiple of $\pi.$

In the bottom right of the figure \color{black} resulting from the homotopy, we can indeed easily see the loose chart.

\section{Further directions}
\label{furtherdirections}
Here we present some expectations that we hope to show in future work.

\subsection{Relations between the augmentation variety and the superpotential}

\label{sec:potential}

Recall that there exists precisely three pseudoholomorphic Maslov-two disc families in $\CP^2$ with boundary on the Clifford torus. Similarly, the augmentation variety of the Legendrian $\Lambda_{\OP{Cl}}$ is the zero-locus of the Laurent polynomial $\partial a=1+\lambda(1+\mu),$ which also is given by a count of precisely three pseudoholomorphic discs. In general we expect that we should be able to compute the augmentation variety in the Bott degenerate situation of $(E,\alpha),$ where the embedded Legendrian lives above the embedded Lagrangian torus, and in this manner obtain a relation between the augmentation variety and the count of such pseudoholomorphic Maslov-two discs. Recall that the count of the latter discs define the so-called superpotential of the Lagrangian, which also is a Laurent polynomial.

In the case of the monotone Clifford and the Chekanov torus $L_{\OP{Cl}},L_{\OP{Ch}} \subset (\CP^2,\omega_{\OP{FS}})$ such a correspondence can indeed be confirmed by means of a hands-on comparison. Recall that the superpotentials of the two tori are given by
\begin{align*}
& \mathfrak{P}_{L_{\OP{Cl}}}(u,v)=u(1+v)+1/u^2v \in u\cdot\C[u^{\pm 3},v^{\pm1}],\\
& \mathfrak{P}_{L_{\OP{Ch}}}(u,v)=u+(1+v)^2/u^2v \in u\cdot\C[u^{\pm 3},v^{\pm1}],
\end{align*}
for suitable spin structures, and relative a suitable basis of $\langle u,v\rangle =H_1(\T^2)$ where $u$ and $v$ correspond to generators of Maslov index equal to two and zero modulo six, respectively; see \cite{Auroux06}.

The precise relation between the augmentation polynomial and the
superpotential is given by the following conjecture. Assume that
$$H_0(\mathcal{A}(\Lambda),\partial)=\C[\mu^{\pm1},\lambda^{\pm1}]/\langle \mathcal{A}ug_{\Lambda}(\mu,\lambda) \rangle$$
where $\mathcal{A}ug_{\Lambda}(\mu,\lambda)$ is the so-called {\bf augmentation polynomial} (which under the assumptions is uniquely determined up to the multiplication by a unit).
\begin{conj}
  \label{correspondeceSuperpotentialAugPol} For an embedded monotone Lagrangian two-torus $L \subset (\CP^2,\omega_{\OP{FS}})$ the superpotential $\mathfrak{P}_L(u,v)$ can be recovered (up to the multiplication by a unit) from the augmentation polynomial $\mathcal{A}ug_{\Lambda}(\mu,\lambda)$ of the Legendrian lift $\Lambda$ of its canonical threefold Bohr--Sommerfeld cover from the equality
$$\mathfrak{P}_{L}(u,v)=\mathcal{A}ug_{\Lambda}(u^3,v) \in u\cdot\C[u^{\pm 3},v^{\pm1}]$$
of ideals, given that we use suitable choices of bases of $H_1(L)$ and $H_1(\Lambda),$ as well as capping paths, and that $\mathcal{A}ug_{\Lambda}(\mu,\lambda)$ has been normalised appropriately.
\end{conj}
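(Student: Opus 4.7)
The plan is to establish a correspondence between Maslov-index-two pseudoholomorphic discs in $(\CP^2, \omega_{\OP{FS}})$ with boundary on $L$ and rigid pseudoholomorphic discs in $\R \times S^5$ with boundary on $\R \times \Lambda$ and a single positive puncture at a degree-one Reeb chord, and then to account for the change of variables induced by the covering $\Lambda \to L$. The geometric starting point is that a Maslov-two disc on $L$ has symplectic area precisely $\pi/3$ (by monotonicity with $N_\omega = 3$), which matches the length of the shortest Bott family of Reeb chords on $\Lambda$ by Proposition~\ref{prop:bottdegree}, specifically giving chords of Bott degree $2 N_\omega \cdot (1/3) - 1 = 1$. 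The partial computations for $\Lambda_{\OP{Cl}}$ and $\Lambda_{\OP{Ch}}$ provided in Section~\ref{sec:dga} already realise this degree count, which is encouraging.

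First, I would make the Bott-degenerate Legendrian $\Lambda$ generic by perturbing each Bott $S^1$-family of chords using a Morse function, so that every family splits into two transverse chords of adjacent integer degrees. The rigid pseudoholomorphic discs contributing to the differential of a degree-one chord in the resulting Chekanov--Eliashberg algebra are precisely what cuts out $\mathcal{A}ug_\Lambda$. On the other hand, after SFT neck-stretching along $\Lambda \subset S^5$ inside the disc bundle of $\mathcal{O}(-1)$ (a strong, but not exact, symplectic filling of the prequantisation bundle), these discs break into Morse-flow half-lines on the Bott manifold of chords glued to pseudoholomorphic discs in $(\mathcal{O}(-1), L')$ for an immersed monotone Lagrangian filling $L'$ which projects to $L \subset \CP^2$. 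The $S^1$-symmetry of the prequantisation bundle then identifies the latter discs, up to equivariant quotient, with Maslov-two discs in $(\CP^2, L)$.

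Second, I would pin down the change of variables. By combining Proposition~\ref{prop:maslov} and Proposition~\ref{prop:bottdegree}, the Hurewicz image in $H_1(L)$ of the class $\mu \in H_1(\Lambda)$ represented by a Reeb fibre is $3u$, for the distinguished basis element $u \in H_1(L)$ of minimal Maslov index two, while $\lambda$ projects isomorphically onto the Maslov-zero subgroup represented by $v$. Consequently the induced coefficient-ring map reads $\mu \mapsto u^3$, $\lambda \mapsto v$, and the bijection of discs above, together with a careful bookkeeping of signs, capping paths, and spin structures, yields the equality $\mathfrak{P}_L(u,v) = \mathcal{A}ug_\Lambda(u^3, v)$ modulo units. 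The containment $\mathfrak{P}_L(u,v) \in u \cdot \C[u^{\pm 3}, v^{\pm 1}]$ is automatic because every Maslov-two disc boundary has $u$-exponent $\equiv 1 \pmod 3$; this provides an a priori sanity check on the proposed identification.

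The main obstacle is twofold. Compactness and gluing of pseudoholomorphic discs inside the non-exact filling $\mathcal{O}(-1)$ is delicate: bubbling of pseudoholomorphic spheres, of Maslov-zero disc components, and of multi-level SFT configurations with several negative Reeb-chord ends must be ruled out or shown to not contribute. This is typically achieved via monotonicity and a priori area bounds, but here it interacts nontrivially with the Bott degeneracy and with the fact that the filling is not exact. A secondary technical point is controlling the Morse cascades that arise when one resolves the Bott manifolds of chords and relating them to equivariant moduli of discs on $L$; this has been carried out in analogous situations for contact homology, but must be adapted to the Legendrian setting, plausibly by going through a cylindrical or equivariant version of Legendrian contact homology. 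A possible strategy to sidestep part of this difficulty is to deform to the nearby toric picture and reduce the two counts to tropical computations, in the spirit of \cite{PascaleffTonkonog}, where matching of superpotentials across mutations is already understood.
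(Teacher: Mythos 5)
The statement you are trying to prove is Conjecture \ref{correspondeceSuperpotentialAugPol}, which the paper explicitly leaves open: it appears in the section ``Further directions'' under the preamble ``Here we present some expectations that we hope to show in future work,'' and the only evidence offered is a hands-on verification for the two computed examples $L_{\OP{Cl}}$ and $L_{\OP{Ch}}$, obtained by comparing the augmentation polynomials $1+\lambda(1+\mu)$ and $1+\lambda(1+\mu)^2$ from Section \ref{sec:dga} with the known superpotentials from Auroux. There is therefore no proof in the paper to compare against. Your outline does track the strategy the authors themselves sketch (computing the augmentation variety in the Bott-degenerate position of $\Lambda$ over $L$ and matching rigid once-punctured discs on $\R\times\Lambda$ with Maslov-two discs on $L$), but as written it is a programme, not a proof: the steps you yourself flag as ``the main obstacle''---SFT compactness and gluing in the non-exact filling $\mathcal{O}(-1)$, exclusion of configurations with sphere bubbles, Maslov-zero disc components, or several negative chord asymptotics, and the Morse--Bott cascade analysis relating the perturbed chords to equivariant disc moduli---are precisely the missing content, and no argument is supplied for any of them. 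Monotonicity alone does not obviously rule out multi-level breaking here, because the relevant moduli are of discs with boundary on a Lagrangian cylinder over an \emph{immersed} (threefold-covered) Lagrangian, so extra corners at the degree-one Bott families cost no index.

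Two concrete inaccuracies in the bookkeeping step. First, no class of $H_1(\Lambda)$ is ``represented by a Reeb fibre'': Reeb orbits are transverse to $\xi$ and hence not contained in $\Lambda$; the class $\mu$ of Section \ref{sec:computing} is a fibre of the Lefschetz fibration $f$, i.e.\ an orbit of $(z_1,z_2)\mapsto(e^{it}z_1,e^{-it}z_2)$, and on the Clifford torus this orbit projects to a \emph{primitive Maslov-zero} class of $L$, not to $3u$. With the paper's conventions it is the longitude $\lambda$ (covering $\gamma$ once), not $\mu$, whose image in $H_1(L)$ lies in the index-three direction, and even this identification must be tracked through the regular homotopy of Figures \ref{fig:homotopy-clifford}--\ref{fig:homotopy-chekanov}, which passes over the line at infinity and shifts relative homology classes by the class of a line. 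The conjecture's phrasing ``suitable choices of bases \ldots and capping paths'' is exactly the acknowledgement that this normalisation is nontrivial; your proposal asserts the change of variables $\mu\mapsto u^3$, $\lambda\mapsto v$ without pinning down the basis, and a direct check against $\partial a=1+\lambda(1+\mu)$ versus $\mathfrak{P}_{L_{\OP{Cl}}}=u(1+v)+1/(u^2v)$ shows that the naive substitution in the Section \ref{sec:comp-cliff} basis does not work. Second, the sign and spin-structure matching (the superpotentials are computed for particular spin structures on $L$, the DGA for the Lie-group spin structure on $\Lambda$, and these must be compared across the threefold cover) is asserted as ``careful bookkeeping'' but is a genuine open point. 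In short: the approach is the expected one, but none of the steps that would turn the conjecture into a theorem are carried out.
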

\begin{remark}
The basis of $H_1(L)=\langle u,v\rangle$ should be chosen so that $u$ and $v$ are mapped to primitive classes that are the boundaries of discs of Maslov index two and zero\color{black}, respectively. The capping paths in $\Lambda$ used when computing the augmentation variety must then be chosen accordingly.
\end{remark}

Since Vianna's infinite family of monotone Lagrangian tori in \cite{Vianna16} can be distinguished by computations of their superpotentials, Conjecture \ref{correspondeceSuperpotentialAugPol} would imply that also their Legendrian lifts all live in different Legendrian isotopy classes.

\subsection{Sublooseness of monotone tori in the projective plane}
As described in Proposition \ref{prp:monotonefilling}, the Legendrian lift of any canonical twofold Bohr--Sommerfeld cover of a monotone Lagrangian torus inside $(\CP^1 \times \CP^1,\omega_{\OP{FS}}\oplus\omega_{\OP{FS}})$ can be seen to admit a monotone filling inside full complex line-bundle. Even if monotone fillability is weaker than exact fillability, in the present setting they should still allow us to conclude the existence of an augmentation with coefficients in $\F.$

Contrary to this, similarly as in the case of the Clifford and Chekanov torus, we believe that the canonical threefold Bohr--Sommerfeld cover of any monotone Lagrangian torus inside $(\CP^2,\omega_{\OP{FS}})$ always has a subloose Legendrian lift.

\bibliographystyle{plain}
\bibliography{references}

\end{document}